\newcommand{\R}{\mathbb{R}}
\newcommand{\N}{\mathbb{N}}
\newcommand{\1}{\mathbbm{1}}
\newtheorem{theorem}{Theorem}[section]
\newtheorem{lemma}[theorem]{Lemma}
\newtheorem{definition}[theorem]{Definition}
\newtheorem{proposition}[theorem]{Proposition}
\newtheorem{remark}[theorem]{Remark}
\numberwithin{equation}{section}
\title{From Hyper Roughness to Jumps as $H \to - 1 \, / \, 2$}
\author[1]{Eduardo Abi Jaber\footnote{EAJ is grateful for the financial support from the Chaires FiME-FDD, Financial Risks, Deep Finance \& Statistics and Machine Learning and systematic methods in finance at Ecole Polytechnique.
}}
\author[1]{Elie Attal} 
\author{Mathieu Rosenbaum}
\affil[1]{Ecole Polytechnique, CMAP}
\begin{document}

\maketitle

\begin{abstract}
    We investigate the weak limit of the hyper-rough square-root process  as the Hurst index $H$ goes to $-1/2\,$. This limit corresponds to the fractional kernel $t^{H - 1 / 2}$ losing integrability. We establish the joint convergence of the couple $(X, M)\,$, where $X$ is the hyper-rough process and $M$ the associated martingale, to a fully correlated Inverse Gaussian Lévy jump process. This unveils the existence of a continuum between hyper-rough continuous models and jump processes, as a function of the Hurst index. Since we prove a convergence of continuous to discontinuous processes, the usual Skorokhod $J_1$ topology is not suitable for our problem. Instead,  we obtain the weak convergence in the Skorokhod $M_1$ topology for $X$ and in the non-Skorokhod $S$ topology for $M$.
\end{abstract}

\section{Introduction}

Since the pioneering work of \cite*{mandelbrot1968fractional} on fractals and fractional Brownian motions, stochastic processes with rough sample paths have garnered significant attention due to their ability to capture irregularities observed in various natural phenomena.\\

The Hurst index $H \in (0\,,1/2]\,$, named after Harold Edwin Hurst, is the key parameter that quantifies the roughness or more precisely the low Hölder regularity of the sample paths of the process. One prominent example is the rough square-root process, which appears in diverse contexts, including  scaling
limits of branching processes in population genetics  leading to catalytic superprocesses  \citep*{dawson1994super, 
 mytnik2015uniqueness} and self–exciting Hawkes processes
in mathematical finance \citep*{jaisson2016rough, el2019characteristic}. The rough square-root  process is governed by the stochastic Volterra equation
\begin{equation}
\label{eq:rough_heston}
    V_t = V_0 + \int_0^t \, (t-s)^{H - 1/2}\, \left(\lambda \, \left(\theta - V_s\right)\, ds + \nu \, \sqrt{V_s}\, dW_s \right) \,,
\end{equation}
where $W$ is a standard Brownian motion, $V_0\,,\lambda\,,\theta \geq 0$ and $\nu \in \mathbb R\,$.  The equation \eqref{eq:rough_heston} admits a unique weak nonnegative solution $V$
 whenever $ H \in (0\,,1/2]\,$, i.e.~when the fractional kernel $K_H(t)= t^{H-1/2}\,$, is locally square-integrable so that the stochastic integral is well-defined as an Itô integral. Under \eqref{eq:rough_heston}, 
$V$ exhibits sample paths with Hölder continuity of any order strictly less than 
$H$, making it less regular than a standard Brownian motion. For $H<1/2\,$, this justifies the use of the term \textit{rough} and in this case,  $V$ is neither Markovian nor a semimartingale.\\

Compared to the initial rough volatility processes introduced in mathematical finance \citep*{gatheral2018volatility}, the model \eqref{eq:rough_heston} has the advantage of allowing efficient pricing \citep{el2019characteristic}, hedging \citep{euch2018perfect}, as well as having natural microstructural foundations \citep{jusselin2020no}. This is related to a key feature of the rough square-root process, namely its affine Volterra structure, as described in \cite*{abi2019affine, abi2021weak}. This property allows for extensions beyond the rough regime to encompass nonpositive Hurst indices. In particular, by considering the integrated process $X=\int_0^{\cdot}V_s \,ds$, an application of the stochastic Fubini theorem yields an implicit Volterra equation for $X$ that eliminates the stochastic integral:
\begin{equation}
\label{eq:hyper_rough_heston}
X_t = V_0\,t + \lambda \,\theta \,\frac{t^{H+3/2}}{(H+1/2)(H+3/2)}  + \int_0^t \, (t-s)^{H - 1/2}\, \left(- \lambda \, X_s + M_s \right)\,ds\,,
\end{equation}
where  $M$ is a martingale with quadratic variation 
\begin{align}\label{eq:quad_M}
    \langle M \rangle_t = \nu^2\, X_t\,.
\end{align} 
The absence of a stochastic integral in \eqref{eq:hyper_rough_heston} means that local integrability of the fractional kernel $K_H$ (instead of square-integrability) is sufficient to make sense of the right-hand side of \eqref{eq:hyper_rough_heston}, allowing us to extend the equation to nonpositive Hurst parameters $H \in (-1/2\,,0]\,$. This extension defines the so-called \textit{hyper-rough} process. Such stochastic Volterra equations with locally integrable kernels have appeared  several times in the literature. For $H=0\,$, they describe the local occupation time of a catalytic super-Brownian motion at the catalyst point 0\,, as studied by \citet{dawson1994super,fleischmann1995new}.  More recently, for $H\in (-1/2\,,0]\,$, these equations have emerged as scaling limits of the integrated intensity of nearly unstable heavy-tailed Hawkes processes in \citet{jusselin2020no}. The existence and uniqueness of weak solutions to \eqref{eq:hyper_rough_heston}  were established by \citet{abi2021weak} for a broad class of locally integrable convolution kernels. Furthermore, \citet{jusselin2020no} demonstrated that for $H\in (-1/2\,,0]\,$, the  process $X$  remains continuous but is no longer absolutely continuous, implying that the density processes $V$ ceases to exist. However, the solution $X$ to \eqref{eq:hyper_rough_heston} remains a non-decreasing process starting from zero, a crucial property for the well-definiteness of the quadratic variation of the martingale $M$ in \eqref{eq:quad_M} in terms of $X$.\\

A natural question emerging in this framework is 
\begin{center}
{\textit{What happens in the extreme case where roughness increases beyond hyper-rough regimes $H \in (-1/2\,,0]\,$, particularly as $H \to -1/2\,$?}}
\end{center}
Our work aims to elucidate a remarkable transition by establishing a fundamental link between \textit{hyper-rough} processes and \textit{jump} processes via a continuum of the Hurst index $H$.\\

As $H$ approaches $-1/2\,$, we expect the trajectories to become increasingly erratic, given that the fractional kernel $K_H$ tends towards the non-integrable function $1\,/\,t\,$. A first attempt to understand this transition was made in \citet{abi2024reconciling}, where the authors introduced a {Markovian one-factor proxy} $X^{\varepsilon}$ as a short-time numerical approximation of $X$. Specifically, they consider the integrated process $X^{\varepsilon}_t := \int_0^t \,V^{\varepsilon}_s \,ds$ with 
 $$dV^{\varepsilon}_t = \frac{1}{\varepsilon}\,(\theta - V^{\varepsilon}_t) \,dt + \nu \,\varepsilon^{H-1/2} \,\sqrt{V^{\varepsilon}_t} dW_t\,, \quad \varepsilon > 0\,,$$
which is defined for any $H \in \R\,$. They study the weak limit of $X^{\varepsilon}$ in the Skorokhod $M_1$ topology, as $\varepsilon \to 0\,$. For $H > - 1/2\,$, they obtain a constant variance limit, whereas for $H = -1/2\,$, they prove $X^{\varepsilon}$ converges to a pure Jump Lévy process: the celebrated Inverse Gaussian process. This result provides intriguing indirect evidence that hints at a connection between \textit{hyper-rough} processes and \textit{jumps}. Nevertheless, 
the approach relied on a Markovian approximation rather than working directly with the true process $X$, which ruled out establishing a direct connection.\\

In this work we establish the transition directly on the true \textit{hyper-rough process} $X$, without resorting to a Markovian proxy. Compared to \cite{abi2024reconciling}, our setting is more involved: we work with the {fully non-Markovian and non-semimartingale} process, analyze the {joint convergence of $(X,M)$} rather than just one factor, and use a refined \textit{topological framework} to rigorously characterize the limiting behavior.\\

To make this transition precise, we show that the couple $(X, M)$ from the \textit{hyper-rough} square-root process in \eqref{eq:hyper_rough_heston} converges weakly to a \textit{jump} process as $H \to -1/2\,$, in a certain topology. Specifically, we establish the weak convergence of $X$ to an Inverse Gaussian process and of $M$ towards a compensated version of $X$ (see Theorem~\ref{theorem:weak_convergence}). Since the couple of \textit{hyper-rough} processes $(X, M)$ is continuous and we expect a discontinuous limit, the classical $J_1$ Skorokhod topology on the space of càdlàg functions cannot be used (see \citet{kern2022skorokhod} for an intuitive view on the \cite{skorokhod1956limit} topologies). In order to allow discontinuous limits from a sequence of continuous functions, we use the $M_1$ Skorokhod topology for $X$. However, recent results from \citet{sojmark2023weak} show that for a sequence of continuous local martingales, $M_1$ convergence implies $J_1$ convergence. Thus, it is not possible that $M$ converges in the $M_1$ topology. Instead, we employ a non-Skorokhod topology adapted to semimartingales: the $S$ topology introduced by \citet{jakubowski1997non}. The $M_1$ topology is weaker than the $J_1$ topology, and the $S$ topology is weaker than both, while still being finer than the $L^1$ or the pseudo-path topology of \citet{meyer1984tightness}.\\

Throughout this paper, we fix $T >0$ and we consider a sequence of Hurst coefficients $(H^n)_{n \geq 0}$ such that for any $n \geq 0\,$, $H^n > - 1/2\,$ and $H^n \to -1/2$ as $n$ goes to infinity. We define  the sequence of associated rescaled fractional kernels $(K^n)_{n \geq 0}$ by
\begin{align}\label{eq:Kn}
K^n(t) := \left(H^n + 1/2\right)\,t^{H^n - 1/2}, \quad t\leq T, \quad n \geq 0\,.   
\end{align}
 Thanks to \citet{jusselin2020no} and \citet[Theorem~2.13]{abi2021weak}, for any $n \geq 0\,$, there exists a filtered probability space $\left(\Omega^n, \mathcal{F}^n, (\mathcal{F}^n_t)_{t \leq T}, \mathbb{P}^n\right)$ satisfying the usual conditions, as well as a non-decreasing nonnegative continuous process $X^n$ and a continuous local martingale $M^n$ with quadratic variation 
 \begin{align}\label{eq:Mnquad}
  \langle M^n \rangle = \nu^2 \,X^n   
 \end{align} 
 such that
\begin{equation}
    \label{eq:definition_sequence}
X^n_t = G_0^n(t) + \int_0^t\,(- \lambda \, X^n_s + M^n_s)\, K^n(t-s)\, ds\,, \quad t \leq T\,,
\end{equation}
where $G_0^n$ is defined by
\begin{equation}
    \label{eq:G_0_n_def}
    G_0^n(t) := V_0\, t + \frac{\lambda \,\theta}{H^n + 3/2}\, t^{H^n + 3/2}\,, \quad t \leq T\,.
\end{equation}
Setting 
\begin{align}
    \label{eq:G_0_def}
  G_0(t) := g_0 \,t\,, \quad t \leq T\,,  \quad \text{with} \quad   g_0 := V_0 + \lambda \, \theta\,,  
\end{align}
the sequence $(G_0^n)_{n\geq 0}$ converges to $G_0$ uniformly on $[0, T]\,$. 
\begin{center}
    \textit{We are interested in the weak convergence of the couple $\left(X^n, M^n\right)$ as $n$ goes to infinity.}
\end{center}

In Section \ref{S:intuition}, we give some intuition for the convergence of our processes to Inverse Gaussian jump processes using the fact that the fractional kernels in \eqref{eq:Kn} converge weakly to a dirac measure as $H$ goes to $-1/2$\,. In Section \ref{S:Main}, we state our main results and illustrate the convergence using a specific numerical scheme. Section \ref{S:Proofs} contains the proofs of the announced results. Finally, in Appendix~\ref{A:topologies}, we collect the main properties and ideas behind both the $M_1$ and the $S$ topology on the space of càdlàg functions. 

\paragraph{Notations.} We denote by $\mathbb{D}_T := D\left([0\,,T]\,, \R\right)$ the space of real-valued càdlàg functions on $[0\,,T]\,$. When it is properly defined, we denote the convolution of two functions by 
\begin{equation}
    \label{eq:def_convolution}
    (f * g)(t) := \int_0^t \,f(t-s)\,g(s)\,ds \,, \quad t \leq T\,,
\end{equation}
and extend this notation to 
\begin{equation}
    \label{eq:def_convolution_diff}
    (f * dg)(t) := \int_0^t \, f(t-s)\,dg(s)\,, \quad t \leq T\,,
\end{equation}
for Lebesgue, Stieltjes or Itô integrals.

\section{Intuition for the Jump Limit}\label{S:intuition}
We start by introducing our limiting process, the Inverse Gaussian (IG) Lévy process. We refer the reader to \citet[I.4]{tankov2003financial} and \citet{barndorff2012basics} for more details on the subject.
\begin{definition}[Inverse Gaussian random variable]
    \label{def:IG}
    Let $\mu\,,\lambda > 0\,$. A real-valued random variable $Y$ follows an Inverse Gaussian law with parameters $(\mu\,, \lambda)\,$, denoted by $IG\left(\mu\,, \lambda\right)\,$, if its probability density function is given by 
    $$
    f_{\mu,\lambda}(y) = \sqrt{\frac{\lambda}{2 \pi y^3}}\,\exp\left(-\frac{\lambda \, (y - \mu)^2}{2 \mu^2 y}\right)\, \1_{y > 0}\,, \quad y \in \R\,.
    $$
\end{definition}
\noindent
In particular, its mean is $\mu$ and its variance is $\frac{\mu^3}{\lambda}\,$. Moreover, its characteristic function is 
$$
\mathbb{E}\left[\exp(iu\,Y)\right] = \exp\left(\frac{\lambda}{\mu}\left[1 - \sqrt{1 - 2\,\frac{\mu^2}{\lambda}\,iu}\right]\right)\,,\quad u \in \R\,.
$$

\begin{definition}[Inverse Gaussian process]
    \label{def:IG_process}
    We say that $(Y_t)_{0\leq t \leq T}$ is an IG process with parameters $(\mu\,, \lambda)$ if it is a Lévy process with càdlàg samples paths, almost surely starting from $0\,$, with Lévy exponent given by 
    $$
    \phi(u) := \frac{\lambda}{\mu}\left[1 - \sqrt{1 - 2\,\frac{\mu^2}{\lambda}\,iu}\right]\,, \quad u \in \R\,.
    $$
\end{definition}
\noindent
This gives the following marginal characteristic function
$$
\mathbb{E}\left[\exp\left(iu\,Y_t\right)\right] = \exp(\phi(u)\,t)\,, \quad  u \in \R\,, \quad t \leq T\,.
$$
Thus, for a fixed $t\,$, we have $Y_t \sim IG\left(\mu\,t\,, \lambda \,t^2\right)\,$. Moreover, the exponent can be rewritten as
$$
\phi(u) = \int_{\R_+}\, \left(e^{iux} - 1 \right) \, \nu(dx)\,,
$$
where the Lévy measure is
$$
\nu(dx) = \sqrt{\frac{\lambda}{2\pi x^3}}\,\exp\left(-\frac{\lambda \, x}{2 \mu^2}\right)\, \1_{x > 0}\, dx\,,
$$
showing that $(Y_t)_{0\leq t\leq T}$ is a non-decreasing pure jump Lévy process. Moreover, since $\nu$ is non-integrable in $0\,$, it is an infinite activity process, meaning it jumps an infinite number of times on every interval.

Additionally, we have the following representation for the IG process as hitting times of a Brownian motion with drift, see for instance \citet[Example~1.3.21]{applebaum2009levy}.
\begin{lemma}
    \label{lemma:IG_representation}
    Let $a\,,b\,,c > 0\,$, for any $t\,$, we define $Y_t$ as the first-hitting time of the drifted Brownian motion $(a\,s + b\,W_s)_s$ at threshold $c\,t$\,, i.e.
    $$
Y_t := \inf\{s \geq 0\,:\, a\,s + b\,W_s = c\,t \}\,.
    $$
    Then, $(Y_t)_t$ is an Inverse Gaussian process with parameters $\left(\frac{c}{a}\,, \frac{c^2}{b^2} \right)\,$.
\end{lemma}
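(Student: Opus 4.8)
The plan is to treat this as two separate tasks: first, to show that $(Y_t)_{0\le t\le T}$ is a Lévy subordinator with càdlàg paths starting from $0$; and second, to identify its one-dimensional law via a Laplace transform. Throughout, write $B_s := a\,s + b\,W_s$, a Brownian motion with positive drift, and for $x\ge 0$ set $\tau_x := \inf\{s\ge 0 : B_s = x\}$, so that $Y_t = \tau_{ct}$. Since $a>0$ we have $B_s\to+\infty$ a.s., hence $\tau_x<\infty$ a.s.\ for every $x$, and $Y_0=\tau_0=0$.

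First I would pin down the path and increment structure. By continuity of $B$, the map $x\mapsto\tau_x$ is non-decreasing, so $t\mapsto Y_t$ is non-decreasing. Right-continuity of $x\mapsto\tau_x$ follows from the classical fact that, by the strong Markov property at $\tau_x$, the process $B$ started afresh at level $x$ is a.s.\ strictly above $x$ along a sequence of times decreasing to $0$ (a Brownian motion with drift started from $0$ is immediately positive along such a sequence); this rules out a flat stretch of the running maximum at the fixed level $x$, giving $\tau_{x+}=\tau_x$, hence $Y$ is càdlàg. For the increments, fix $0\le t_1<t_2$; since $\tau_{ct_1}$ is a stopping time with $B_{\tau_{ct_1}}=ct_1$, the strong Markov property shows that $(B_{\tau_{ct_1}+u}-ct_1)_{u\ge 0}$ is again a Brownian motion with drift $a$ and volatility $b$, independent of the past, and $Y_{t_2}-Y_{t_1}$ is precisely its first passage time to level $c(t_2-t_1)$. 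Thus $Y_{t_2}-Y_{t_1}$ is independent of $(Y_r)_{r\le t_1}$ and distributed as $Y_{t_2-t_1}$, so $(Y_t)$ is a Lévy process.

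It then remains to compute $\mathbb{E}[e^{-qY_t}]$ for $q\ge 0$. I would apply optional stopping to the exponential martingale $\exp(u\,B_s-\psi(u)\,s)$, where $\psi(u)=a\,u+\tfrac12 b^2u^2$ is the Laplace exponent of $B$, choosing $u=u(q):=b^{-2}\big(-a+\sqrt{a^2+2b^2q}\big)\ge 0$, the nonnegative root of $\psi(u)=q$. Because $B_{s\wedge\tau_{ct}}\le ct$ the stopped martingale is bounded, so dominated convergence as $s\to\infty$ (using $\tau_{ct}<\infty$ a.s.) yields $\mathbb{E}[e^{u(q)\,ct-q\,Y_t}]=1$, i.e.
$$
\mathbb{E}\big[e^{-q\,Y_t}\big] = \exp\!\big(-u(q)\,c\,t\big) = \exp\!\left(\frac{a\,c\,t}{b^2}\left(1-\sqrt{1+\frac{2b^2q}{a^2}}\,\right)\right),\qquad q\ge 0 .
$$

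Finally I would match this with the IG process. Substituting $iu\mapsto -q$ in the Lévy exponent $\phi$ of Definition~\ref{def:IG_process}, an $IG(\mu,\lambda)$ process satisfies $\mathbb{E}[e^{-qY_t}]=\exp\big(\tfrac{\lambda}{\mu}\,t\,[1-\sqrt{1+2(\mu^2/\lambda)q}\,]\big)$; equating this with the displayed expression for all $q\ge 0$ forces $\lambda/\mu=ac/b^2$ and $\mu^2/\lambda=b^2/a^2$, whence $\mu=c/a$ and $\lambda=c^2/b^2$. Since a Lévy process is determined by its one-dimensional marginals and the Laplace transform on $\R_+$ characterizes a law on $\R_+$, this identifies $(Y_t)$ as the IG process with parameters $(c/a,\,c^2/b^2)$. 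The only genuinely delicate point is the regularity of $x\mapsto\tau_x$ — i.e.\ verifying that the limit is honestly càdlàg rather than merely monotone, which is exactly the classical statement that the inverse running maximum of $B$ is a pure-jump subordinator; the remaining steps are routine.
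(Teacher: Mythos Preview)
Your proof is correct. The paper itself does not prove this lemma at all: it simply cites \citet[Example~1.3.21]{applebaum2009levy} and moves on. So there is nothing to compare on the level of argument structure; what you have written is precisely the standard textbook proof that the reference contains --- strong Markov to get independent stationary increments, optional stopping of the Wald/exponential martingale to get the Laplace transform, then matching the parameters with Definition~\ref{def:IG_process}. The computation $\lambda/\mu = ac/b^2$, $\mu^2/\lambda = b^2/a^2$, hence $\mu=c/a$ and $\lambda=c^2/b^2$, is correct. Your one flagged subtlety (right-continuity of $x\mapsto\tau_x$, equivalently instantaneous regularity of $(0,\infty)$ for Brownian motion with positive drift) is indeed the only non-routine point, and your justification via the strong Markov property and Blumenthal-type behavior is the standard one.
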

The convergence of the hyper-rough processes $\left(X^n\,, M^n\right)$ in \eqref{eq:Mnquad}-\eqref{eq:definition_sequence} to a Lévy process of Inverse Gaussian type can be intuitively understood with the following lemma.
\begin{lemma}
    \label{lemma:weak_cv_to_dirac}
{For any $n \geq 0\,$, define the measure $\mu^n(dt) := K^n(t)\,dt\,$, where $K^n$ is given by \eqref{eq:Kn}\,. Then, for any $f \in \mathbb{D}_T\,$,
    $$
    \int_0^t \, f(t -s)\,\mu^n(ds) \overset{n \to \infty}{\longrightarrow} f(t)\,, \quad t \in (0\,,T] \setminus Disc(f)\,,
    $$
    where $Disc(f) := \left\{ 0 \leq t \leq T\,:\,f(t-) \neq f(t)\right\}$ is the discontinuity set of $f\,$.
    In particular, if we denote by $\delta_0$ the Dirac measure in $0\,$, we have the weak convergence
     $$\mu^n \overset{n \to \infty}{\implies} \delta_0\,, 
    $$ in the space of finite nonnegative measures on $\left([0\,,T]\,, \mathcal{B}_{[0\,,T]}\right)\,$.}
\end{lemma}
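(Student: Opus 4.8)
The plan is to reduce everything to the elementary fact that the mass of $\mu^n$ concentrates at the origin. Write $\alpha_n := H^n + 1/2 > 0$, so that $\alpha_n \to 0$ and, since $H^n - 1/2 = \alpha_n - 1$, the kernel is $K^n(t) = \alpha_n\,t^{\alpha_n - 1}$ and $\mu^n(ds) = \alpha_n\,s^{\alpha_n - 1}\,ds$ on $[0,T]$. A direct integration then gives, for every $0 < \varepsilon \le T$,
$$\mu^n([0,\varepsilon]) = \varepsilon^{\alpha_n} \longrightarrow 1, \qquad \mu^n([0,T]) = T^{\alpha_n} \longrightarrow 1, \qquad \mu^n([\varepsilon,T]) = T^{\alpha_n} - \varepsilon^{\alpha_n} \longrightarrow 0,$$
as $n \to \infty$; moreover $C := \sup_n T^{\alpha_n} < \infty$ since $(T^{\alpha_n})_n$ converges. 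These three facts are the only quantitative input needed.

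For the pointwise statement, fix $t \in (0,T] \setminus Disc(f)$. Because $f$ is càdlàg, it is bounded on $[0,T]$, and because $t \notin Disc(f)$ we have $f(t-s) \to f(t)$ as $s \downarrow 0$, so that $\omega(\varepsilon) := \sup_{0 \le s \le \varepsilon} |f(t-s) - f(t)| \to 0$ as $\varepsilon \downarrow 0$. Decompose, for $\varepsilon \in (0,t)$,
$$\int_0^t f(t-s)\,\mu^n(ds) - f(t) = \int_0^\varepsilon \big(f(t-s)-f(t)\big)\,\mu^n(ds) + \int_\varepsilon^t \big(f(t-s)-f(t)\big)\,\mu^n(ds) + f(t)\,\big(t^{\alpha_n}-1\big),$$
where we used $\mu^n([0,t]) = t^{\alpha_n}$; the last term tends to $0$ because $t>0$. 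On $[0,\varepsilon]$ the first integral is bounded by $\omega(\varepsilon)\,\mu^n([0,\varepsilon]) \le C\,\omega(\varepsilon)$, and on $[\varepsilon,t]$ the second is bounded by $2\|f\|_\infty\,\mu^n([\varepsilon,t]) \le 2\|f\|_\infty\,(T^{\alpha_n}-\varepsilon^{\alpha_n}) \to 0$. Hence $\limsup_n \big|\int_0^t f(t-s)\,\mu^n(ds) - f(t)\big| \le C\,\omega(\varepsilon)$ for every $\varepsilon \in (0,t)$, and letting $\varepsilon \downarrow 0$ concludes.

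For the weak convergence $\mu^n \Rightarrow \delta_0$ on the finite nonnegative measures of $([0,T],\mathcal{B}_{[0,T]})$, it suffices to show $\int_{[0,T]} g\,d\mu^n \to g(0)$ for every $g \in C([0,T])$. This is immediate from the pointwise statement applied at $t = T$ to $f(\cdot) := g(T-\cdot)$, which is continuous hence has $Disc(f) = \emptyset$: indeed $\int_0^T f(T-s)\,\mu^n(ds) = \int_0^T g(s)\,\mu^n(ds)$ and $f(T) = g(0)$. (Alternatively one repeats verbatim the splitting above with $t=T$, $f$ replaced by $g$ and $f(t)$ by $g(0)$, using the uniform continuity of $g$ at $0$.) This is a routine approximate-identity argument; the only points deserving attention are that $\mu^n$ does not have unit total mass (whence the correction term $f(t)(t^{\alpha_n}-1)$ and the exclusion of $t=0$), and that one must invoke only the \emph{left} regularity of $f$ at $t$, which is exactly what $t \notin Disc(f)$ provides, since in $f(t-s)$ the mass of $\mu^n(ds)$ sits near $s=0$. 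No substantial obstacle is anticipated.
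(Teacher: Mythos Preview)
Your proof is correct and follows essentially the same approximate-identity argument as the paper: split the convolution at a small parameter near the origin, use the left-continuity of $f$ at $t$ on the inner piece, the boundedness of $f$ together with $\mu^n([\varepsilon,t])\to 0$ on the outer piece, and handle the mass defect via the correction term $f(t)(t^{\alpha_n}-1)$. Your treatment is slightly more explicit than the paper's in deriving the weak convergence $\mu^n\Rightarrow\delta_0$ from the pointwise statement (by taking $t=T$ and $f=g(T-\cdot)$), but the substance is identical.
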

\begin{proof}
    Let $f \in \mathbb{D}_T\,$, $t \in (0\,,T] \setminus Disc(f)$ and $\varepsilon > 0\,$. There exists $0 < \eta < t$ such that
$$
\left|f(t-s) - f(t)\right| \leq \varepsilon\,, \quad  s \leq \eta\,.
$$
Computing
$$
\int_{t_0}^{t_1}\, \mu^n(ds) = t_1^{H^n + 1/2} - t_0^{H^n + 1/2}\,, \quad t_0 \leq t_1 \leq T\,, \quad n \geq 0\,,
$$
we get for any $n \geq 0\,$,
\begin{align*}
    \left|\int_0^t\,f(t-s)\, \mu^n(ds) - f(t)\right| &\leq \int_0^t \, \left|f(t-s) - f(t) \right|\, \mu^n(ds) + \left|f(t)\,\left(t^{H^n + 1/2} - 1\right) \right| \\
    &\leq \varepsilon\, \eta^{H^n + 1/2} + 2\,\|f\|_{\infty} \, \left(t^{H^n + 1/2} - \eta^{H^n + 1/2}\right) + \left|f(t)\,\left(t^{H^n + 1/2} - 1\right) \right| \,.
\end{align*}
Finally, since $H^n + 1/2$ goes to $0$ as $n \to + \infty\,$, we have 
$$
\int_0^t\, f(t-s) \, \mu^n(ds) \overset{n \to \infty}{\longrightarrow} f(t) = \int_{[0\,,t]}\, f(t-s)\, \delta_0(ds)\,.
$$
\end{proof}
\begin{remark}
Note that for a decreasing sequence $\left(\alpha^n\right)_{n \geq 0}$ such that $\alpha^n \overset{n \to \infty}{\longrightarrow} \alpha > -1\,$, one cannot find a scaling $\left(\beta^n\right)_{n \geq 0}$ such that $\left(\beta^n\,t^{\alpha^n}\,dt\right)_{n \geq 0}$ converges weakly to $\delta_0\,$. Thus, Lemma \ref{lemma:weak_cv_to_dirac} is specific to the limit $H^n - 1 / 2 \to -1\,$.  
\end{remark}

The Inverse Gaussian structure emerges from the limiting equation and the interpretation of the Inverse Gaussian process as the first-passage time of a drifted Brownian motion. To see this, we rewrite \eqref{eq:definition_sequence} as  
$$
X^n_t = G_0^n(t) + \int_0^t\,\left(-\lambda \, X^n_{t-s} + \nu\,W_{X^n_{t-s}}\right)\, \mu^n(ds)\,,
$$
where $\mu^n$ is defined in Lemma \ref{lemma:weak_cv_to_dirac} and $W$ is a standard Brownian motion, coming from the Dambis-Dubins-Schwarz theorem applied on $M^n$. In the limit, as $n \to \infty\,$, we heuristically expect an equation of the form  
\begin{equation}
    \label{eq:heuristics_limit_equation}
X_t = g_0\,t - \lambda\,X_t + \nu\,W_{X_t}
\end{equation}
due to Lemma~\ref{lemma:weak_cv_to_dirac}. A non-decreasing solution $X$ to this equation is the Inverse Gaussian process with parameters $\left(\frac{g_0}{1 + \lambda}\,, \frac{g_0^2}{\nu^2}\right)$ (see Definition~\ref{def:IG_process} and Lemma~\ref{lemma:IG_representation}).  Furthermore, the limiting equation immediately yields $M$ as the compensated process
$$
M = (1 + \lambda)\, X - G_0\,.
$$

\begin{remark}
    Lemma \ref{lemma:weak_cv_to_dirac}, as well as all the results of the present paper, are true for any kernel $K^n(t) = \alpha^n\,t^{H^n - 1/2}$ with a scaling $\alpha^n \sim H^n + 1/2$ as $H^n$ goes to $-1/2\,$. In particular, this is the case for the historical scaling in $1 \,/\, \Gamma\left(H^n + 1/2\right)\,$ of the rough square-root process \eqref{eq:rough_heston}. This scaling was first chosen in \citet{mandelbrot1968fractional} and \citet{levy1953random} to define two different types of fractional Brownian motions, making sure that the definition coincides with the \textit{Riemann-Liouville fractional integral} given by
\begin{equation}
    \label{eq:fractional_integral}
I^{\alpha}\,f(x) := \frac{1}{\Gamma(\alpha)}\,\int_0^x \,(x - t)^{\alpha - 1}\, f(t)\,dt\,, \quad \alpha > 0\,.
\end{equation}
The constant $1 \,/\, \Gamma(\alpha)$ ensures that it satisfies the fundamental integration properties
$$
I^{\alpha}\left(I^{\beta}\,f\right) = I^{\alpha + \beta}\,f\,, \quad \frac{d}{dx}\,I^{\alpha + 1}\,f(x) = I^{\alpha}\,f(x)\,,
$$
as well as the fact that for $\alpha \in \N\,$, \eqref{eq:fractional_integral} coincides with Cauchy's iterated integral. Using this scaling, \eqref{eq:definition_sequence} can be rewritten as
$$
X^n_t = G_0^n(t) + I^{H^n + 1/2}\,\left(- \lambda \, X^n + M^n\right)(t)\,, \quad t \leq T\,.
$$
Therefore, heuristically, in the limit $H^n \to -1/2\,$, we obtain the fractional integral of order $0\,$, i.e. the identity, which is consistent with \eqref{eq:heuristics_limit_equation}. %Note that this fractional integral/derivative formalism was already used in \citet{jusselin2020no}.
\end{remark}

\section{Main Results}\label{S:Main}

\subsection{Weak Convergence to an Inverse Gaussian Process}

Our main result establishes the weak convergence of the entire sequence $\left(X^n, M^n\right)_{n \geq 0}$ (not just along a subsequence), as defined in \eqref{eq:Mnquad}-\eqref{eq:definition_sequence}, towards Inverse Gaussian processes. Additionally, we provide a type of almost sure Skorokhod representation along a subsequence.  The convergence holds in the product topology $M_1 \otimes S$ on the space $\mathbb{D}_T^2\,$. The $M_1$ and $S$ topologies are recalled in  Appendix~\ref{A:topologies}.

\begin{theorem}
    \label{theorem:weak_convergence}
    Let $Y$ be an Inverse Gaussian Lévy process with parameters $\left(\frac{g_0}{1 + \lambda}\,, \frac{g_0^2}{\nu^2} \right)\,$, in the sense of Definition \ref{def:IG_process}.
    We have the weak convergence $\left(X^n\,, M^n\right) \overset{n \to \infty}{\implies} \left(Y\,, (1 + \lambda)\,Y - G_0\right)$ on the space $\mathbb{D}_T^2$ endowed with the product topology $M_1 \otimes S\,$, where the function $G_0$ is given by \eqref{eq:G_0_def}.\\
    \noindent
    Furthermore, there exists a subsequence $\left(X^{n_{l}}\,, M^{n_{l}}\right)_{l \geq 0}$ and càdlàg processes $\left(\hat{X}^l\,, \hat{M}^l\right)_{l \geq 0}\,$, $\hat{X}$ and $\hat{M}$ defined on the probability space $\left([0\,,1]\,, \mathcal{B}_{[0\,, 1]}\,, dt\right)\,$, such that:
    \begin{enumerate}[label=(\roman*)]
        \item For any $l \geq 0\,$, $\left(X^{n_{l}}\,, M^{n_{l}}\right) \sim \left(\hat{X}^l\,, \hat{M}^l\right)\,$.
        \item For any $\omega \in [0\,, 1]\,$, $\left(\hat{X}^l(\omega)\,, \hat{M}^l(\omega)\right) \overset{M_1 \otimes S}{\longrightarrow} \left(\hat{X}(\omega)\,, \hat{M}(\omega)\right)\,$.
        \item $\left(\hat{X}\,, \hat{M}\right) \sim \left(Y\,, (1 + \lambda)\, Y - G_0\right)\,$, so that almost surely
        $$
        \hat{M}_t = (1 + \lambda)\, \hat{X}_t - g_0\,t\,, \quad t \leq T\,.
        $$
    \end{enumerate}
\end{theorem}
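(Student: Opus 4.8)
The plan is to combine a tightness argument with an $\omega$-by-$\omega$ identification of the limit carried out on an almost sure Skorokhod representation, obtaining the one-dimensional (and finite-dimensional) marginals separately from the affine Volterra structure.

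\textbf{Tightness in $M_1\otimes S$.} First I would record the uniform moment bounds $\sup_n\mathbb{E}[X^n_T]<\infty$ and $\sup_n\mathbb{E}[\sup_{t\le T}|M^n_t|^2]<\infty$: taking expectations in \eqref{eq:definition_sequence} and using that the kernels $K^n$ have uniformly bounded $L^1([0,T])$-norm $T^{H^n+1/2}$ while $(G_0^n)_n$ is uniformly bounded gives the first bound, and then \eqref{eq:Mnquad} together with Doob's inequality gives the second (in particular each $M^n$ is a true martingale). Since each $X^n$ is non-decreasing with $X^n_0=0$ and $\{f\in\mathbb{D}_T:\ f\text{ non-decreasing},\ f(0)=0,\ f(T)\le R\}$ is $M_1$-compact, Markov's inequality yields $M_1$-tightness of $(X^n)_n$. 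For $(M^n)_n$ I would invoke Jakubowski's $S$-tightness criterion \citep{jakubowski1997non}: it asks for tightness of $\{\sup_{t\le T}|M^n_t|\}_n$ — which holds by the $L^2$-bound — together with a uniform bound on the conditional variation, which vanishes identically for martingales. Hence $(M^n)_n$ is $S$-tight and $(X^n,M^n)_n$ is tight in $\mathbb{D}_T^2$ for $M_1\otimes S$. Given this, along any subsequence there is a further subsequence $(n_l)$ along which $(X^{n_l},M^{n_l})$ converges weakly to some limit $(X,M)$; since $M_1$ is Polish and $S$ admits Jakubowski's almost sure Skorokhod representation along subsequences, after passing to a further subsequence I obtain copies $(\hat X^l,\hat M^l)$ on $([0,1],\mathcal B_{[0,1]},dt)$ with $(\hat X^l,\hat M^l)\sim(X^{n_l},M^{n_l})$, $(\hat X^l,\hat M^l)\to(\hat X,\hat M)$ in $M_1\otimes S$ for every $\omega$, and $(\hat X,\hat M)\sim(X,M)$. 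This already yields items (i)--(ii); it remains to identify the law of $(X,M)$.

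\textbf{Pathwise identification $\hat M=(1+\lambda)\hat X-G_0$.} The relation \eqref{eq:definition_sequence} is a measurable, full-measure event in path space, hence holds almost surely for $(\hat X^l,\hat M^l)$. Fix $\omega$ outside a null set. Using that $\hat X^l(\omega)$ is non-decreasing and converges to $\hat X(\omega)$ in $M_1$, monotonicity provides, for every continuity point $t$ of $\hat X(\omega)$, local uniform control of $\hat X^l(\omega)$ near $t$, so that the argument of Lemma~\ref{lemma:weak_cv_to_dirac} applies with an $l$-dependent integrand and $\int_0^t\hat X^l_{t-s}(\omega)\,K^{n_l}(s)\,ds\to\hat X_t(\omega)$. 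Together with $G_0^n\to G_0$ uniformly and \eqref{eq:definition_sequence}, this forces $\int_0^t\hat M^l_{t-s}(\omega)\,K^{n_l}(s)\,ds\to(1+\lambda)\hat X_t(\omega)-g_0t$ for a.e.\ $t$. On the other hand, integrating against a test function $\phi\in C([0,T])$, a Fubini computation rewrites $\int_0^T\phi(t)\int_0^t\hat M^l_{t-s}(\omega)K^{n_l}(s)\,ds\,dt$ as the integral of a uniformly bounded, asymptotically continuous function of $s$ against the approximate identity $K^{n_l}(s)\,ds$; since $S$-convergence of $\hat M^l(\omega)$ entails a uniform sup-bound and convergence in $L^1([0,T],dt)$, this converges to $\int_0^T\phi(t)\hat M_t(\omega)\,dt$. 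Matching the two limits gives $\int_0^T\phi(t)\hat M_t(\omega)\,dt=\int_0^T\phi(t)\big((1+\lambda)\hat X_t(\omega)-g_0t\big)\,dt$ for all $\phi$, whence $\hat M=(1+\lambda)\hat X-G_0$ by right-continuity, with $G_0$ as in \eqref{eq:G_0_def}. The same argument applied to any weakly convergent subsequence shows every subsequential limit satisfies $M=(1+\lambda)X-G_0$.

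\textbf{Identification of the marginals and conclusion.} It remains to identify the law of $\hat X$ (equivalently the $X$-marginal of any subsequential limit). Here I would use the affine Volterra structure of \eqref{eq:Mnquad}--\eqref{eq:definition_sequence}: by \citet{abi2021weak}, the Fourier--Laplace functionals of $X^n$ are given explicitly through a Riccati--Volterra equation with kernel $K^n$, quadratic nonlinearity $w\mapsto-\lambda w+\tfrac{\nu^2}{2}w^2$ and source $(G_0^n)'$. Since $K^n*g\to g$, the Riccati--Volterra equations converge (by a stability argument for Volterra equations) to the \emph{algebraic} Riccati equation $w=u-\lambda w+\tfrac{\nu^2}{2}w^2$ and its piecewise analogue for multi-time functionals, whose solution, inserted into the exponent, is exactly the Lévy exponent of the Inverse Gaussian process with parameters $\big(\tfrac{g_0}{1+\lambda},\tfrac{g_0^2}{\nu^2}\big)$ (a quick check of the first two cumulants, and of Lemma~\ref{lemma:IG_representation} with $a=1+\lambda$, $b=|\nu|$, $c=g_0$, confirms the parameters). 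Hence $X^n$ converges in the sense of finite-dimensional distributions to $Y$ (which has no fixed discontinuity), and with the $M_1$-tightness above, $X^n\Rightarrow Y$ in $M_1$; in particular the $X$-marginal of any subsequential limit of $(X^n,M^n)$ is $Y$. Combined with the previous step, every subsequential limit equals $(Y,(1+\lambda)Y-G_0)$ in law, so tightness upgrades this to weak convergence of the full sequence in $M_1\otimes S$ to $(Y,(1+\lambda)Y-G_0)$; applying the almost sure representation to the now identified limit gives (i)--(iii), the last one with $\hat M_t=(1+\lambda)\hat X_t-g_0t$ coming from the pathwise identification.

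\textbf{Main obstacle.} I expect the delicate point to be the passage to the limit in the convolution \eqref{eq:definition_sequence} under only $M_1$ (respectively $S$) convergence of the integrands: $S$-convergence gives neither pointwise values nor uniform control, so the $\hat M^l$-term must be handled through the weak/test-function reformulation rather than directly, and one must carefully combine this with the monotonicity of the $\hat X^l$-term; the use of the non-metrizable $S$ topology (hence of Jakubowski's subsequential almost sure representation in place of the classical Skorokhod theorem) is the other technical subtlety. The affine-Volterra input is essentially a continuity statement for Riccati--Volterra equations under the kernel convergence of Lemma~\ref{lemma:weak_cv_to_dirac} and should be routine given \citet{abi2021weak}.
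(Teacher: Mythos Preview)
Your overall strategy is sound and your identification step is \emph{genuinely different} from the paper's. Two comments.

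\textbf{A real gap in the $M_1$-tightness argument.} The set $\{f\in\mathbb{D}_T:\ f\text{ non-decreasing},\ f(0)=0,\ f(T)\le R\}$ is \emph{not} $M_1$-compact: the sequence $f_n=R\cdot\1_{[1/n,T]}$ lies in it but has no $M_1$-limit in $\mathbb{D}_T$ (the candidate limit is not right-continuous at $0$). For non-decreasing processes the modulus $w'(\cdot,t,\delta)$ in Proposition~\ref{proposition:M_1_compactness} vanishes, but the boundary terms $v(\cdot,0,\delta)$ and $v(\cdot,T,\delta)$ do not, so Proposition~\ref{proposition:M_1_tightness} still requires $\sup_n\mathbb{P}(X^n_\delta\ge\eta)\to0$ and $\sup_n\mathbb{P}(X^n_T-X^n_{T-\delta}\ge\eta)\to0$ as $\delta\downarrow0$. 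Your moment bound $\sup_n\mathbb{E}[X^n_T]<\infty$ does not give this; you need the increment estimate $\mathbb{E}[X^n_t-X^n_s]\le G_0^n(t)-G_0^n(s)$, which follows as in Lemma~\ref{lemma:estimates} by taking expectations in \eqref{eq:definition_sequence} and using the monotonicity of $X^n$. With this fix your tightness step matches the paper's.

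\textbf{Comparison with the paper's identification.} The paper never passes to the limit in the convolution equation \eqref{eq:definition_sequence}; instead it identifies the \emph{joint} law of $(\hat X,\hat M)$ in one stroke via the joint characteristic functional of Lemma~\ref{lemma:characteristic_convergence} (with both test functions $f$ and $h$), approximating indicators by $C^1$ functions to reach finite-dimensional distributions. The relation $\hat M=(1+\lambda)\hat X-G_0$ then falls out as a consequence of the joint law being that of $(Y,(1+\lambda)Y-G_0)$. Your route is the reverse: you first establish $\hat M=(1+\lambda)\hat X-G_0$ $\omega$-by-$\omega$ by a pathwise limit in \eqref{eq:definition_sequence}, then only need the $X$-marginal identification (i.e.\ Lemma~\ref{lemma:characteristic_convergence} with $h\equiv0$). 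Your test-function/Fubini treatment of the $\hat M^l$-convolution is the right idea and does work: writing $\int_0^T K^{n_l}(s)\,F^l(s)\,ds$ with $F^l(s)=\int_0^{T-s}\phi(u+s)\hat M^l_u(\omega)\,du$, the uniform sup-bound on $(\hat M^l(\omega))_l$ from $S$-convergence gives $|F^l(s)-F^l(0)|\to0$ as $s\downarrow0$ \emph{uniformly in $l$}, and the $L^1(dt)$-convergence (which does follow from $S$-convergence via Proposition~\ref{propostion:S_pointwise_cv} and dominated convergence along subsubsequences) gives $F^l(0)\to\int_0^T\phi(t)\hat M_t(\omega)\,dt$. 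Your approach has the appeal of making the limiting relation transparent at the level of paths and of reducing the Riccati analysis to the one-variable case; the paper's approach avoids the pathwise limit in the singular convolution entirely, at the cost of carrying both test functions through Lemma~\ref{lemma:characteristic_convergence} and then doing a $C^1$-approximation of indicators. Both are valid; your version is arguably more conceptual, the paper's more self-contained.
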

\begin{proof}
    The proof is given in Section \ref{subsec:proof_weak_convergence}.
\end{proof}

The main tools for the proof of this theorem are given in the following subsections: Lemma \ref{lemma:tightness} gives tightness of the sequence of processes and Lemma \ref{lemma:characteristic_convergence} will help us identify the limiting law using the characteristic functional. Nevertheless, since $(\mathbb{D}_T\,, S)$ is not metrizable (see Proposition \ref{proposition:S_properties}), the Prokhorov theorem as well as the a.s. Skorokhod representation theorem cannot be readily applied. Therefore, the identification of the limiting law with the convergence of the characteristic functional given by Lemma \ref{lemma:characteristic_convergence} cannot be achieved with classical methods. To solve these issues, we need to use specific tools such as the convergence $\overset{*}{\longrightarrow}_{\mathcal{D}}$ from Definition \ref{definition:cv_D}, which can be seen as an \textit{almost sure representation in compacts} for weak convergence in the $S$ topology, and the associated variant of Skorokhod's representation theorem for general topological spaces admitting a countable family of functions separating their points given by Proposition \ref{proposition:skorokhod_representation_non_metrizable}. Note that even though tightness of the sequence $\left(X^n \,, M^n\right)_{n \geq 0}$ for the product topology $M_1 \otimes S$ simply corresponds to tightness of $\left(X^n\right)_{n \geq 0}$ for $M_1$ and of $\left(M^n\right)_{n \geq 0}$ for $S\,$, some subtleties still need to be addressed when applying Proposition \ref{proposition:skorokhod_representation_non_metrizable} to $\left(\mathbb{D}_T^2\,, M_1 \otimes S\right)\,$. Specifically, one needs to check that this topological space admits a countable family of continuous functions separating its points, which is done at the beginning of the proof of Theorem \ref{theorem:weak_convergence}, as well as ensuring measurability of the couple $\left(X^n\,,M^n\right)$ with respect to the Borel $\sigma$-field generated by the product topology (see Remark \ref{remark:product_sigma_field}).\\

To illustrate Theorem~\ref{theorem:weak_convergence}, we implement the iVi scheme of \citet{jaber2024simulation}\footnote{{The scheme is in fact designed to simulate the square-root process and not its rough counterpart. However, using a discretization of the convolution kernel, we are able to adapt it to our rough context.}}, which simulates the integrated square-root process and its associated martingale using Inverse Gaussian increments. This scheme provides a clear visual representation of the convergence stated in Theorem~\ref{theorem:weak_convergence}.   We set the parameters to $V_0 = 0.1\,$, $\lambda = 10\,$, $\theta = 0.1\,$, $\nu = 1\,$, and $T = 1\,$.

Figure~\ref{fig:trajectory_cv} displays simulated trajectories of $X^n$, $M^n$, and $G_0^n + M^n - (1 + \lambda)\,X^n$ as $H^n$ approaches $-1/2\,$, using the same random seed. The transition in the behavior of $X$ is clearly visible: it evolves from being absolutely continuous for $H > 0$ to continuous but not absolutely continuous for $-1/2 < H \leq 0\,$, and progressively becomes discontinuous as $H$ nears $-1/2\,$.   Furthermore, the convergence to the limiting Inverse Gaussian (IG) process (black dotted curve) is remarkably visible. Similarly, $M$ develops a jump behavior as $H \to -1/2\,$, aligning more closely with the limiting shifted IG process. Finally, we observe that the relation $(1 + \lambda)X = G_0 + M\,$, which does not hold for $H \gg -1/2\,$, is more closely satisfied as $H$ approaches $-1/2\,$. At each time step of the iVi scheme, we sample from the Inverse Gaussian distribution using standard normal and uniform random variables, the limiting Inverse Gaussian process and its compensated version shown in Figure \ref{fig:trajectory_cv} are simulated with the same random variables. Note that these processes theoretically have infinitely many jumps on any interval, even though this is not visible on the plots.

\begin{figure}[h]
    \centering
    \includegraphics[height = 14cm, width = 13cm]{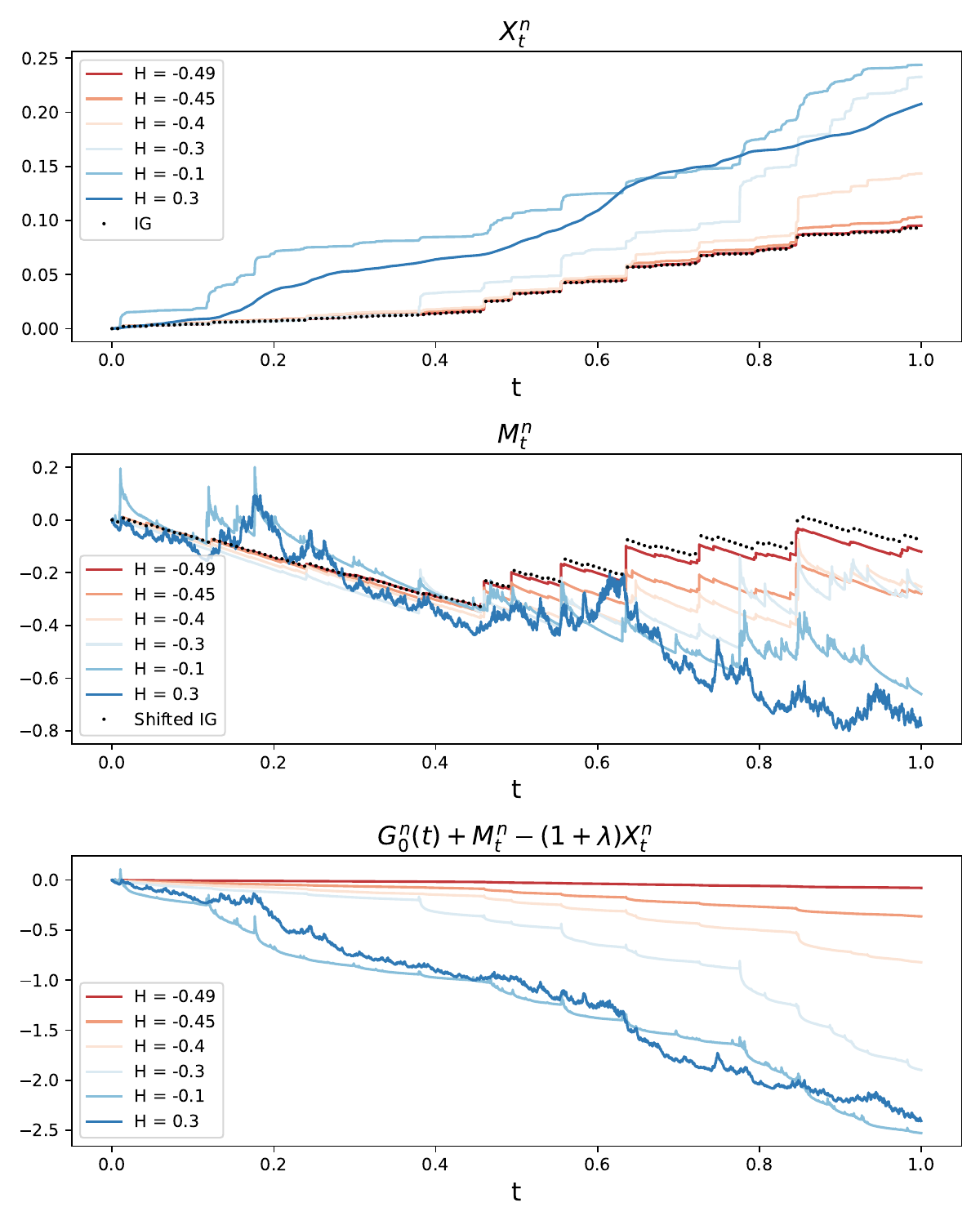}
    \caption{Convergence of $(X^n\,, M^n)_{n\geq 0}$ to a jump process as $H^n$ goes to $-1/2\,$. \textbf{Top:} trajectories of $\left(X^n\right)_{n \geq 0}$ (plain curves) and trajectory of the IG process $Y$ with parameters $\left(\frac{g_0}{1 + \lambda}\,, \frac{g_0^2}{\nu^2}\right)$ (black dotted curve) using the same random seed. \textbf{Middle:} trajectories of $\left(M^n\right)_{n \geq 0}$ (plain curves) and trajectory of the shifted IG process $(1 + \lambda)\, Y - G_0$ (black dotted curve) using the same random seed. \textbf{Bottom:} trajectories of $\left(G_0^n + M^n - (1 + \lambda)\,X^n\right)_{n \geq 0}\,$.}
    \label{fig:trajectory_cv}
\end{figure}

Figure \ref{fig:marginal_cv} shows the convergence of the marginal distribution of $\left(X^n_T\,, M^n_T\right)$ to $\mathcal{L}\left(Y_T\,, (1 + \lambda)\,Y_T - g_0\,T\right)$ where $Y_T \sim IG\left(\frac{g_0\,T}{1 + \lambda}\,, \frac{g_0^2\,T^2}{\nu^2}\right)\,$ (see Definition \ref{def:IG}), as $H^n$ goes to $-1/2\,$. We plot the empirical density of $X^n_T$ as well as $M^n_T\,$, and the joint characteristic function $\mathbb{E}\left[\exp\left(u\,X^n_T + v\,M^n_T\right)\right]\,$.

\begin{figure}[h]
    \centering
    \includegraphics[scale = 0.3]{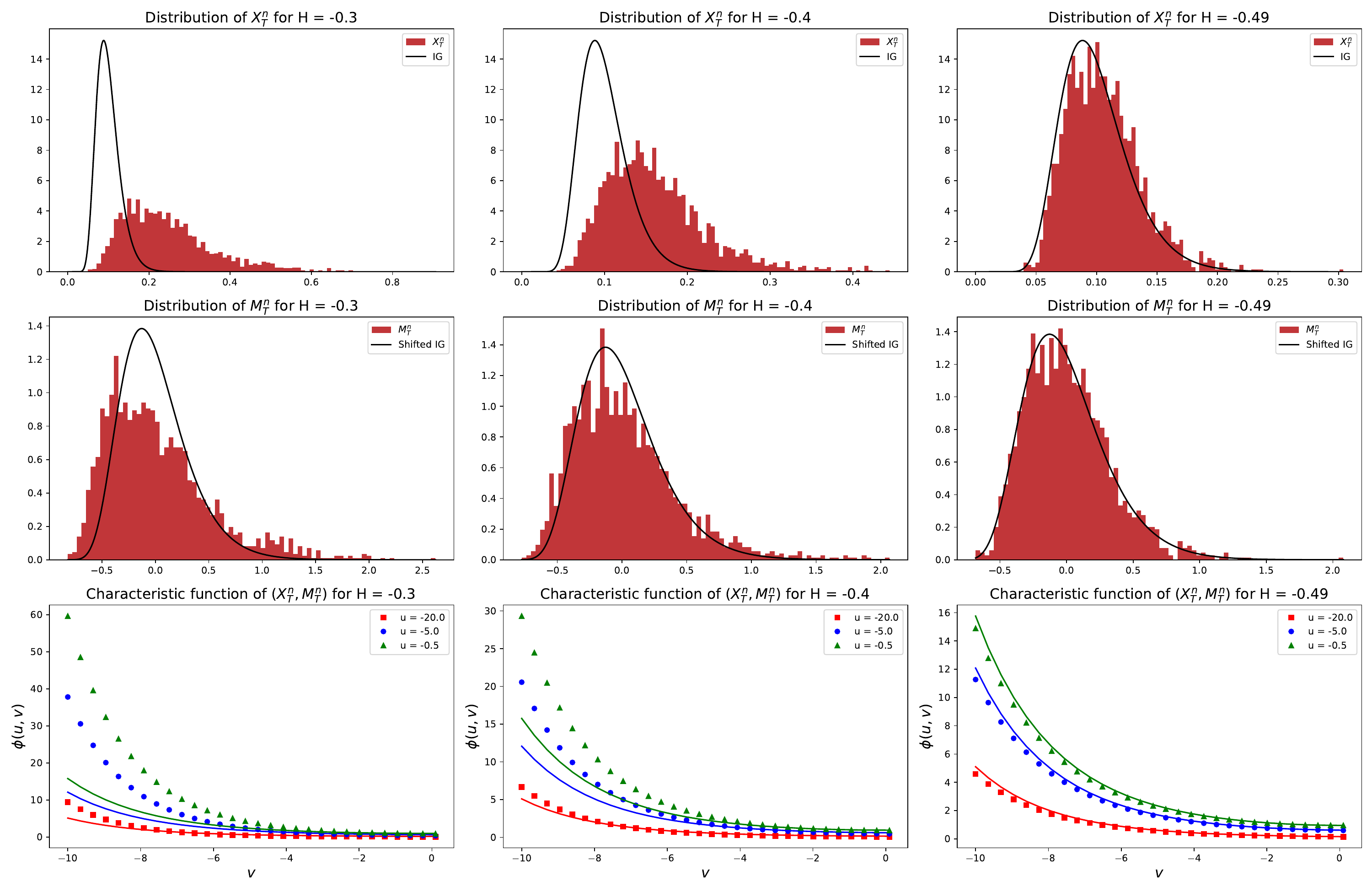}
    \caption{\textbf{Top row:} Empirical density of $X^n_T$ for different values of $H^n\,$, compared with the theoretical limiting Inverse Gaussian distribution (black line). \textbf{Middle row:} Empirical density of $M^n_T$ for different values of $H^n\,$, compared with the theoretical limiting shifted Inverse Gaussian distribution (black line). \textbf{Bottom row:} Empirical joint characteristic function $\mathbb{E}\left[\exp\left(u\,X^n_T + v\,M^n_T\right)\right]$ (dotted curves), compared with the theoretical limiting Inverse Gaussian - Shifted Inverse Gaussian characteristic function (plain lines), as a function of $v\,$, for different values of $u\,$.}
    \label{fig:marginal_cv}
\end{figure}

\subsection{Estimates}
We derive the following estimates on the sequence $(X^n\,, M^n)_{n\geq 0}\,$.

\begin{lemma}
    \label{lemma:estimates}
    For any $n \geq 0\,$, we have: 
    \begin{enumerate}
        \item $M^n$ is a true $\left(\mathcal{F}^n_t\right)_{t \leq T}$-martingale such that 
        $$\mathbb{E}\left[\left(M^n_t\right)^2\right] = \nu^2 \, \mathbb{E}\left[X_t^n\right] < + \infty\,, \quad t \leq T\,.
        $$
        \item $\mathbb{E}\left[X_t^n\right] \leq G_0^n(t)\,, \quad t \leq T\,$.
        \item $\mathbb{E}\left[X_{t}^n - X_s^n\right] \leq G_0^n(t) - G_0^n(s)\,, \quad  s \leq t \leq T\,$.
    \end{enumerate}
\end{lemma}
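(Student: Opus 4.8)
The plan is to first establish items 2 and 3, since the martingale claim in item 1 will follow once we know $\mathbb{E}[X^n_t] < \infty$ and can control $\mathbb{E}[(M^n_t)^2]$ via the quadratic variation $\langle M^n\rangle = \nu^2 X^n$. Fix $n$ and write $K = K^n$, $X = X^n$, $M = M^n$, $G_0 = G_0^n$ for brevity. Since $M$ is a priori only a continuous local martingale, I would introduce a localizing sequence of stopping times $(\tau_k)_{k\geq 0}$ with $\tau_k \uparrow T$ such that $M^{\tau_k}$ is a true martingale; the stopped process $X^{\tau_k}$ still satisfies the Volterra equation with $M$ replaced by $M^{\tau_k}$ plus a controllable correction, or more cleanly one works with $X_{t\wedge\tau_k}$ directly and uses that $X$ is nondecreasing so $X_{t\wedge\tau_k}\leq X_t$.

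\textbf{Step 1 (bound on $\mathbb{E}[X^n_t]$).} Taking expectations in \eqref{eq:definition_sequence}, using the stopped martingale to justify that the expectation of the convolution term involving $M^{\tau_k}$ vanishes (Fubini is legitimate since $K$ is integrable and, after localization, $\mathbb{E}[\sup_{s\leq T}|M^{\tau_k}_s|]<\infty$), one gets
$$
\mathbb{E}\big[X_{t\wedge\tau_k}\big] \leq G_0(t) - \lambda \int_0^t \mathbb{E}\big[X_{s\wedge\tau_k}\big]\,K(t-s)\,ds \leq G_0(t)\,,
$$
using $\lambda\geq 0$, $X\geq 0$, and $G_0$ nondecreasing. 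Then Fatou / monotone convergence as $k\to\infty$ gives $\mathbb{E}[X_t]\leq G_0(t)<\infty$, which is item 2. Some care is needed to handle the $-\lambda X$ term: one can either drop it immediately by nonnegativity (giving the clean bound above) or, to get a sharper statement, iterate the resolvent of $\lambda K$; for the stated inequality dropping it suffices.

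\textbf{Step 2 (item 1).} With $\mathbb{E}[X_t]<\infty$ in hand, $\mathbb{E}[\langle M\rangle_T] = \nu^2\,\mathbb{E}[X_T] \leq \nu^2 G_0(T) < \infty$, so by the standard criterion a continuous local martingale with integrable quadratic variation is a true square-integrable martingale; the Itô isometry then yields $\mathbb{E}[(M^n_t)^2] = \mathbb{E}[\langle M^n\rangle_t] = \nu^2\,\mathbb{E}[X^n_t]$, which is item 1.

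\textbf{Step 3 (item 3).} For $s\leq t$, write $X_t - X_s$ using \eqref{eq:definition_sequence} at times $t$ and $s$, split the convolution $\int_0^t K(t-u)(\cdots)du$ into $\int_0^s$ and $\int_s^t$, and take expectations (now legitimate without localization since $M$ is a true martingale and $\mathbb{E}[\sup_{u\leq T}X_u] = \mathbb{E}[X_T]<\infty$, so the $M$-terms have zero mean by Fubini). The martingale contributions drop, and what remains is
$$
\mathbb{E}[X_t - X_s] = G_0(t) - G_0(s) - \lambda\!\int_0^t\! \mathbb{E}[X_u]K(t-u)\,du + \lambda\!\int_0^s\! \mathbb{E}[X_u]K(s-u)\,du \leq G_0(t) - G_0(s)\,,
$$
where the inequality follows because, after the change of variable $u\mapsto t-u$ resp. $s-u$, the kernel $K$ is monotone (decreasing if $H^n<1/2$) and $\mathbb{E}[X_\cdot]$ is nonnegative, so the two $\lambda$-integrals combine to a nonnegative quantity; alternatively, bound the $\int_0^s$ terms by comparing $K(t-u)\leq K(s-u)$ for $u<s$ and drop $\int_s^t$ by nonnegativity. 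This gives item 3, and taking $s=0$ recovers item 2.

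\textbf{Main obstacle.} The delicate point is the rigorous justification of exchanging expectation and the convolution integral against $K$ when $M^n$ is only known a priori to be a \emph{local} martingale: one must localize, pass the inequality through the limit (monotone convergence is available because $X^n\geq 0$ and nondecreasing), and only afterwards upgrade $M^n$ to a true martingale in item 1 — so the logical order is items 2, then 1, then 3 (or a sharpened 2), not the order in which they are stated. A secondary subtlety is controlling the sign of the $-\lambda X^n$ convolution terms in item 3; the monotonicity of $K^n$ on $[0,T]$ is what makes the combined term have the right sign, so this should be stated explicitly.
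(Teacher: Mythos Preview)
There is a genuine gap in Step~1. You localize $M$ via stopping times $\tau_k$ and claim that ``the expectation of the convolution term involving $M^{\tau_k}$ vanishes''. But the Volterra structure does not commute with stopping: evaluating the equation at $t\wedge\tau_k$ gives
\[
X_{t\wedge\tau_k} = G_0(t\wedge\tau_k) + \int_0^{t\wedge\tau_k} (-\lambda X_s + M_s)\,K(t\wedge\tau_k - s)\,ds\,,
\]
in which both the upper limit and the kernel argument are random. After Fubini the martingale contribution becomes $\int_0^t \mathbb{E}\big[M_s\,K(t\wedge\tau_k - s)\,\1_{s\leq\tau_k}\big]\,ds$, and since $K(t\wedge\tau_k - s)\,\1_{s\leq\tau_k}$ is neither deterministic nor $\mathcal{F}_s$-measurable, the zero-mean property of $M^{\tau_k}$ does \emph{not} kill it. Your displayed inequality $\mathbb{E}[X_{t\wedge\tau_k}] \leq G_0(t) - \lambda\int_0^t \mathbb{E}[X_{s\wedge\tau_k}]K(t-s)\,ds$ is therefore unjustified, and the ``controllable correction'' you allude to is exactly the difficulty.

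The paper circumvents this by a different mechanism: it drops the $-\lambda X$ term, bounds $|M^n_{t\wedge\tau_k - s}|\,\1_{s\leq\tau_k}$ by $\sup_{u\leq t-s}|(M^n)^{\tau_k}_u|$, controls its second moment via BDG by $4\nu^2\,\mathbb{E}[X^n_{(t-s)\wedge\tau_k}]$, and then closes the loop with a Gr\"onwall argument using the resolvent of the fractional kernel. This yields a crude uniform bound on $\mathbb{E}[X^n_{t\wedge\tau_k}]$, Fatou gives $\mathbb{E}[X^n_t]<\infty$, and only then is $M^n$ a true martingale --- so the logical order is: BDG/resolvent a~priori bound $\Rightarrow$ item~1 $\Rightarrow$ items~2 and~3, the reverse of your plan. (A smaller issue in Step~3: the monotonicity of $K$ gives $K(t-u)\leq K(s-u)$, which points the wrong way for what you need; what actually works after your change of variable is the monotonicity of $v\mapsto\mathbb{E}[X_v]$, and the paper uses the pathwise version $X_{t-u}\geq X_{s-u}$ directly.)
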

\begin{proof}
    The proof is given in Section \ref{subsec:proof_estimates}.
\end{proof}

\subsection{Tightness}
\label{subsec:tightness}
For the proof of Theorem \ref{theorem:weak_convergence}, we need tightness of the sequence $\left(X^n\,, M^n\right)_{n \geq 0}$ for the product topology $M_1 \otimes S\,$, which is given by the following lemma.
\begin{lemma}
    \label{lemma:tightness}
    The sequence $(X^n\,, M^n)_{n \geq 0}$ is tight in $\left(\mathbb{D}_T^2\,, M_1 \otimes S\right)\,$.
\end{lemma}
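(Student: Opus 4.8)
The plan is to decouple the two coordinates. As already noted in the excerpt, tightness of $(X^n,M^n)_{n\ge0}$ in the product topology $M_1\otimes S$ on $\mathbb{D}_T^2$ is equivalent to tightness of $(X^n)_{n\ge0}$ in $(\mathbb{D}_T,M_1)$ together with tightness of $(M^n)_{n\ge0}$ in $(\mathbb{D}_T,S)$, so I would prove these two facts separately. In both cases the only probabilistic input is the uniform first-moment control furnished by Lemma~\ref{lemma:estimates}, combined with the fact that the deterministic maps $G_0^n$ are uniformly bounded and uniformly Lipschitz on $[0,T]$ (indeed the derivative of $G_0^n$, namely $t\mapsto V_0+\lambda\theta\,t^{H^n+1/2}$, is bounded uniformly in $n$ on $[0,T]$), with $G_0^n(0)=0$.

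For the $M_1$-tightness of $(X^n)_{n\ge0}$, I would use that each $X^n$ is nonnegative, non-decreasing and starts from $0$: for a monotone real-valued path the value $X^n_t$ always lies in the interval joining $X^n_{t_1}$ and $X^n_{t_2}$ whenever $t_1\le t\le t_2$, so the $M_1$ oscillation modulus of $X^n$ vanishes identically. By the $M_1$-tightness criterion recalled in Appendix~\ref{A:topologies}, it then suffices to check that the sup-norms $\|X^n\|_{\infty,[0,T]}=X^n_T$ are tight and that the near-endpoint increments $X^n_\delta$ and $X^n_T-X^n_{T-\delta}$ are uniformly small in probability as $\delta\to0$. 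Both follow from Lemma~\ref{lemma:estimates}(2)--(3): $\mathbb{E}[X^n_T]\le G_0^n(T)$, $\mathbb{E}[X^n_\delta]\le G_0^n(\delta)$, and $\mathbb{E}[X^n_T-X^n_{T-\delta}]\le G_0^n(T)-G_0^n(T-\delta)$, so that, since the $G_0^n$ are uniformly Lipschitz with $G_0^n(0)=0$, one has $\sup_n G_0^n(T)<\infty$ and $\sup_n\big(G_0^n(\delta)\vee(G_0^n(T)-G_0^n(T-\delta))\big)\to0$ as $\delta\to0$, and Markov's inequality concludes.

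For the $S$-tightness of $(M^n)_{n\ge0}$, I would invoke Jakubowski's tightness criterion for the $S$ topology in its semimartingale form (recalled in Appendix~\ref{A:topologies}): a sequence of semimartingales $Y^n=Y^n_0+N^n+A^n$ is $S$-tight once $\sup_n\big(\mathbb{E}[\sup_{t\le T}|Y^n_t|]+\mathbb{E}[\mathrm{Var}_{[0,T]}(A^n)]\big)<\infty$. By Lemma~\ref{lemma:estimates}(1), $M^n$ is a genuine martingale, so its finite-variation part vanishes, and Doob's $L^2$ inequality together with $\langle M^n\rangle=\nu^2X^n$ and Lemma~\ref{lemma:estimates}(2) yields $\mathbb{E}[\sup_{t\le T}|M^n_t|]\le(\mathbb{E}[\sup_{t\le T}(M^n_t)^2])^{1/2}\le 2(\mathbb{E}[(M^n_T)^2])^{1/2}=2\nu(\mathbb{E}[X^n_T])^{1/2}\le 2\nu\,G_0^n(T)^{1/2}$, which is bounded uniformly in $n$. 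If instead one verifies the raw $S$-tightness criterion, the uniform-norm part is handled in the same way, and for each $a>0$ the expected number $N^a(M^n)$ of $a$-oscillations of the path is bounded by introducing the stopping times $\tau^n_0=0$, $\tau^n_{k+1}=\inf\{t>\tau^n_k:|M^n_t-M^n_{\tau^n_k}|\ge a\}\wedge T$: by orthogonality of martingale increments and optional sampling, $a^2\,\mathbb{E}[N^a(M^n)]\le\sum_k\mathbb{E}[(M^n_{\tau^n_{k+1}\wedge T}-M^n_{\tau^n_k\wedge T})^2]=\mathbb{E}[(M^n_T)^2]=\nu^2\mathbb{E}[X^n_T]\le\nu^2G_0^n(T)$, uniformly in $n$, whence tightness of $(N^a(M^n))_n$ by Markov's inequality.

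The main obstacle is conceptual rather than computational: the estimates above are immediate consequences of Lemma~\ref{lemma:estimates}, so the delicate point is to handle the $S$ topology correctly. Since $(\mathbb{D}_T,S)$ is not metrizable, \emph{tightness} has to be read in Jakubowski's sense, and one must make sure the relevant criterion (or the bound on the number of $a$-oscillations of a martingale, which is precisely where $\langle M^n\rangle=\nu^2X^n$ and the uniform bound on $\mathbb{E}[X^n_T]$ really enter) applies verbatim. One should also recall, as in the excerpt, that tightness for $M_1\otimes S$ genuinely factorizes over the two coordinates, the measurability of $(X^n,M^n)$ with respect to the product Borel $\sigma$-field being dealt with separately (see Remark~\ref{remark:product_sigma_field}).
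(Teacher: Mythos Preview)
Your proposal is correct and follows the same overall strategy as the paper: factorize tightness via Tychonoff, use monotonicity of $X^n$ to kill the $M_1$ oscillation modulus and then Lemma~\ref{lemma:estimates} with Markov's inequality for the sup-norm and endpoint increments, and use the martingale structure of $M^n$ for $S$-tightness. The only difference is in the $S$-tightness step: the paper invokes Proposition~\ref{proposition:S_tightness_supermartingale} directly, which only requires $\sup_n\sup_{t\le T}\mathbb{E}[|M^n_t|]<\infty$ and follows from $\mathbb{E}[|M^n_t|]\le 1+\nu^2\mathbb{E}[X^n_t]\le 1+\nu^2G_0^n(T)$, whereas you go through Doob's $L^2$ inequality (or the explicit oscillation count) to get the stronger bound $\sup_n\mathbb{E}[\sup_t|M^n_t|]<\infty$; both routes work, yours is slightly more hands-on and the paper's is slightly shorter since the up-crossing estimate for supermartingales is already packaged in the cited proposition.
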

\begin{proof}
    The proof is given in Section \ref{subsec:proof_tightness}.
\end{proof}

We provide here some justification for the choice of the topology $M_1 \otimes S\,$. As mentioned in the introduction, the usual $J_1$ Skorokhod topology does not allow a sequence of continuous functions to converge to a discontinuous one. However, this is possible in the $M_1$ topology, which still remains quite informative (stronger than the Skorokhod $M_2$ topology and than the $L^p$ topologies for $0 < p < + \infty$). Moreover, the conditions for tightness in the $M_1$ topology are fairly simple for a sequence of non-decreasing processes. It was already used in \citet{abi2024reconciling}, which inspired the present work. The remaining question is therefore the choice of the $S$ topology for the sequence $\left(M^n\right)_{n \geq 0}\,$. Recent results from \citet[Corollary~4.5]{sojmark2023weak} show that a sequence of continuous local martingales which converges weakly for the $M_1$ topology must converge for the $J_1$ topology\footnote{The result requires that the sequence of local martingales has so-called \textit{good decompositions} (see \citet[Definition~3.3]{sojmark2023weak}), which is the case if they are continuous.}, which cannot be the case in our framework. One can check that is not possible to prove $M_1$-tightness of $\left(M^n\right)_{n \geq 0}\,$, as the quantity 
$$
\sup_{0 \leq t \leq T} \, \sup_{0 \, \vee \, (t- \delta) \leq t_1 < t_2 < t_3 \leq (t + \delta) \, \wedge T}\, d\left(M^n_{t_2}\,, \left[M^n_{t_1}\,, M^n_{t_3}\right]\right)\,, \quad \delta > 0\,, \quad t \leq T\,,
$$
where $d$ is given by \eqref{eq:dM1}, cannot be bounded and is involved in the characterization of tightness (see Propositions \ref{proposition:M_1_compactness} and \ref{proposition:M_1_tightness}). Furthermore, the $S$ topology is particularly suited for martingales (see Proposition \ref{proposition:S_tightness_supermartingale}) while still providing strong stability properties, as in Proposition \ref{proposition:S_martingale_stability}. 

\subsection{Convergence of the Characteristic Functional}
Let $f\,,h : [0\,,T] \to \R$ be continuous. According to \citet[Theorem~2.2, Theorem~2.5, Example~2.4]{abi2021weak}, for any $n \geq 0\,$, the joint characteristic functional of $\left(X^n\,, M^n\right)$ is given by
$$
\mathbb{E}\left[\exp\left(i\, \int_0^T \, f(T-t)\, dX^n_t + i \, \int_0^T \, h(T-t) \, dM^n_t\right) \right] = \exp\left(\int_0^T\, F\left(T-t\,, \Psi^n(T-t)\right)\, dG_0^n(t)\right)\,,
$$
where
$$
F(t, u) := i\,f(t) - \frac{\nu^2}{2}\, h^2(t) + \left(i \nu^2\, h(t) - \lambda \right)\, u + \frac{\nu^2}{2}\,u^2\,, \quad t \leq T \,, \quad u \in \mathbb{C}\,,
$$
and $\Psi^n : [0\,,T] \to \mathbb{C}$ is the unique continuous solution, with nonpositive real part, to the Ricatti-Volterra equation
$$
\Psi^n(t) = \int_0^t\, F\left(s\,,\Psi^n(s)\right)\, K^n(t-s)\, ds\,, \quad  t \leq T\,.
$$
We have the following convergence of the characteristic functional.
\begin{lemma}
    \label{lemma:characteristic_convergence}
    For any $t \in [0\,, T]\,$, let $\Psi(t)$ be the solution, with nonpositive real part, to
    \begin{equation}
        \label{eq:def_Psi}
    \Psi(t) = F(t\,, \Psi(t))\,.
    \end{equation}
    Then $\Psi$ is continuous, and we have the convergence
    $$
    \int_0^T\, F\left(T-t\,, \Psi^n(T-t) \right)\, dG_0^n(t) \overset{n \to \infty}{\longrightarrow} \int_0^T\, F(T-t\,, \Psi(T-t))\, dG_0(t)  = g_0\, \int_0^T\, \Psi(t)\, dt\,,
    $$
    where $g_0$ is defined in \eqref{eq:G_0_def}\,.
\end{lemma}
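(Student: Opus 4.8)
The plan is to reduce everything to the convergence $\Psi^n\to\Psi$, so the proof splits into: (i) solving the limiting algebraic equation and showing $\Psi$ is continuous; (ii) uniform‑in‑$n$ bounds on the Riccati–Volterra solutions $\Psi^n$; (iii) a linearised Volterra–Gr\"onwall estimate giving $\Psi^n\to\Psi$ in $L^1([0,T])$; (iv) passing to the limit in the integral. Assume $\nu\neq0$ (if $\nu=0$ then $F(t,\cdot)$ is affine, $\Psi(t)=if(t)/(1+\lambda)$, and every step below trivialises). For (i): inserting the explicit form of $F$ into $\Psi(t)=F(t,\Psi(t))$ and multiplying by $2/\nu^2$ shows that $z=\Psi(t)$ solves $z^2+p(t)z+q(t)=0$ with $p(t):=2ih(t)-\frac{2(1+\lambda)}{\nu^2}$ and $q(t):=\frac{2if(t)}{\nu^2}-h^2(t)$. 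One computes $p(t)^2-4q(t)=\frac{4(1+\lambda)^2}{\nu^4}-\frac{8i}{\nu^2}\big((1+\lambda)h(t)+f(t)\big)$, whose real part is the strictly positive constant $\frac{4(1+\lambda)^2}{\nu^4}$, so this number never meets $(-\infty,0]$ and $w(t):=\sqrt{p(t)^2-4q(t)}$ (principal branch) is well defined and continuous in $t$. From $\mathrm{Re}(w)^2=\mathrm{Re}(w^2)+\mathrm{Im}(w)^2\ge\frac{4(1+\lambda)^2}{\nu^4}$ and $\mathrm{Re}(w)>0$ we get $\mathrm{Re}(w(t))\ge\frac{2(1+\lambda)}{\nu^2}$, hence $\Psi(t):=\frac{-p(t)-w(t)}{2}$ has nonpositive real part; since the two roots sum to $-p(t)$ with $\mathrm{Re}(-p(t))=\frac{2(1+\lambda)}{\nu^2}>0$, at most one root can have nonpositive real part, so $\Psi(t)$ is this unique root, it is continuous on $[0,T]$, and it is bounded. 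This also proves the last equality in the statement: since $G_0(t)=g_0t$ and $F(s,\Psi(s))=\Psi(s)$, the change of variables $s=T-t$ gives $\int_0^T F(T-t,\Psi(T-t))\,dG_0(t)=g_0\int_0^T\Psi(s)\,ds$.

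For (ii), the excerpt already records that each $\Psi^n$ is continuous with $\mathrm{Re}\,\Psi^n\le0$; I would in addition invoke the a priori Riccati–Volterra estimates of \cite[Theorems~2.2 and 2.5]{abi2021weak} to obtain $\sup_{n}\|\Psi^n\|_\infty=:R<\infty$, the point being that those estimates involve the kernel only through $\|K^n\|_{L^1([0,T])}=T^{H^n+1/2}$, and $\sup_n T^{H^n+1/2}<\infty$ — this is exactly what the rescaling in \eqref{eq:Kn} buys, since $\int_0^T t^{H^n-1/2}\,dt$ itself diverges as $H^n\to-1/2$. For (iii), set $D^n:=\Psi^n-\Psi$. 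As $F(s,\cdot)$ is entire and $F(s,\Psi(s))=\Psi(s)$, the fundamental theorem of calculus gives $F(s,\Psi^n(s))-F(s,\Psi(s))=\gamma^n(s)D^n(s)$ with $\gamma^n(s):=\int_0^1\partial_uF(s,\Psi(s)+\theta D^n(s))\,d\theta=i\nu^2h(s)-\lambda+\nu^2\frac{\Psi(s)+\Psi^n(s)}{2}$; since $\mathrm{Re}\,\Psi\le0$ and $\mathrm{Re}\,\Psi^n\le0$, we get the key dissipativity $\mathrm{Re}\,\gamma^n(s)=-\lambda+\nu^2\,\mathrm{Re}\big(\tfrac{\Psi(s)+\Psi^n(s)}{2}\big)\le-\lambda\le0$, together with $|\gamma^n(s)|\le\lambda+\nu^2(\|h\|_\infty+R)$. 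Subtracting $\Psi(t)$ from $\Psi^n(t)=(K^n*F(\cdot,\Psi^n))(t)=(K^n*\Psi)(t)+(K^n*(\gamma^nD^n))(t)$ yields the linear Volterra equation $D^n=e^n+K^n*(\gamma^nD^n)$ with $e^n:=K^n*\Psi-\Psi$; Lemma~\ref{lemma:weak_cv_to_dirac} applied to the continuous function $\Psi$ (whose discontinuity set is empty), together with $|(K^n*\Psi)(t)|\le\|\Psi\|_\infty T^{H^n+1/2}$ and dominated convergence, gives $e^n\to0$ pointwise on $(0,T]$ and in $L^1([0,T])$.

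The heart of the proof — and the step I expect to be the main obstacle — is then the \emph{uniform} Volterra–Gr\"onwall estimate $\|D^n\|_{L^1([0,T])}\le C\,\|e^n\|_{L^1([0,T])}$ with $C$ independent of $n$, which immediately yields $\Psi^n\to\Psi$ in $L^1([0,T])$. A naive Gr\"onwall does not close here, because $\|K^n\|_{L^1}=T^{H^n+1/2}\to1$ and $|\gamma^n|$ need not be small, so one must genuinely use the \emph{sign} $\mathrm{Re}\,\gamma^n\le-\lambda\le0$ and not merely the size of $\gamma^n$. The available structure is that $K^n(t)=(H^n+1/2)t^{H^n-1/2}$ is completely monotone, so that the resolvent $r^n$ of $-\lambda K^n$ is nonpositive with $\|r^n\|_{L^1([0,T])}\le1$ uniformly in $n$ (the associated second‑kind resolvent $1+\int_0^\cdot r^n$ stays in $[0,1]$); resolving this dissipative linear part first recasts the equation for $D^n$ with a nonnegative kernel of uniformly bounded $L^1$‑norm that moreover converges weakly to a multiple of $\delta_0$, driven by a coefficient of nonpositive real part — a configuration that can be closed. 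Establishing this uniform estimate rigorously is the technical core; it is of the type supplied by the completely‑monotone‑kernel theory underpinning \cite{abi2021weak}, and its uniformity in $n$ once again hinges on $\sup_n\|K^n\|_{L^1([0,T])}<\infty$.

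Finally, for (iv): write $dG_0^n(t)=(V_0+\lambda\theta\,t^{H^n+1/2})\,dt$, so its density is bounded uniformly on $[0,T]$ and converges to $g_0$ for every $t\in(0,T]$; moreover $F$ is bounded on, and Lipschitz in $u$ with constant $L_R$ on, the ball of radius $R$, which contains all the $\Psi^n(s)$ and $\Psi(s)$. Then, by the decomposition
\begin{align*}
\int_0^T F(T-t,\Psi^n(T-t))\,dG_0^n(t)-g_0\int_0^T F(T-t,\Psi(T-t))\,dt
&=\int_0^T\big[F(T-t,\Psi^n(T-t))-F(T-t,\Psi(T-t))\big]\,dG_0^n(t)\\
&\qquad+\int_0^T F(T-t,\Psi(T-t))\,\big(dG_0^n(t)-g_0\,dt\big),
\end{align*}
the first integral is bounded by $\big(\sup_n\|V_0+\lambda\theta\,t^{H^n+1/2}\|_\infty\big)L_R\|\Psi^n-\Psi\|_{L^1([0,T])}\to0$ by (iii), and the second tends to $0$ by dominated convergence since its integrand is bounded and vanishes pointwise on $(0,T]$. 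Together with the identity from Step (i), this gives the claim.
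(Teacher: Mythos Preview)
Your overall plan (i)--(iv) matches the paper's, and steps (i) and (iv) are essentially the paper's arguments. For (ii) the paper does not cite a priori bounds from \citet{abi2021weak} but instead uses the comparison result \citet[Theorem~C.3]{abi2019multifactor}: writing $\Psi^n=K^n*(z^n\Psi^n+\xi)$ with $z^n:=\tfrac{\nu^2}{2}\Psi^n+\rho$, one gets $|\Psi^n|\le\chi^n$ where $\chi^n\ge0$ solves $\chi^n=K^n*(\Re(z^n)\chi^n+|\xi|)$; since $\Re(z^n)\le0$, the equation itself gives $\chi^n\le K^n*|\xi|\le\|\xi\|_\infty\,T^{H^n+1/2}$, which is uniform in $n$.

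The genuine gap is in (iii). You correctly diagnose that a naive Gr\"onwall fails (because $\|K^n\|_{L^1}\to1$ while $|\gamma^n|$ is not small) and that the sign $\Re\gamma^n\le0$ must be used, but the resolvent sketch you give is incomplete and in fact vacuous when $\lambda=0$, since then the resolvent of $-\lambda K^n$ is zero and the ``dissipative linear part'' you propose to resolve first is absent. The paper closes this step with no resolvent and no iteration, by the same comparison trick as in (ii) combined with a shift. From your equation $D^n=e^n+K^n*(\gamma^nD^n)$ one has
\[
D^n-e^n=K^n*\bigl(\gamma^n(D^n-e^n)\bigr)+K^n*(\gamma^ne^n).
\]
Applying \citet[Theorem~C.3]{abi2019multifactor} gives $|D^n-e^n|\le\tilde\chi^n$ where $\tilde\chi^n\ge0$ solves $\tilde\chi^n=K^n*\bigl(\Re(\gamma^n)\tilde\chi^n+|\gamma^ne^n|\bigr)$. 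Since $\Re(\gamma^n)\le0$ and $\tilde\chi^n\ge0$, this equation \emph{itself} yields the pointwise bound $\tilde\chi^n\le K^n*|\gamma^ne^n|$ --- that is the whole trick, and it uses only $\Re\gamma^n\le0$, not $\le-\lambda$. Hence $|D^n|\le\tilde C\,K^n*|e^n|+|e^n|$, and Fubini together with $\|K^n\|_{L^1}=T^{H^n+1/2}$ give
\[
\|D^n\|_{L^1([0,T])}\le\bigl(\tilde C\,T^{H^n+1/2}+1\bigr)\,\|e^n\|_{L^1([0,T])},
\]
which is exactly the uniform estimate you were after, and which then feeds into your step (iv) as written.
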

\begin{proof}
    The proof is given in Section \ref{subsec:proof_characteristic_convergence}.
\end{proof}
\noindent
\begin{remark}
    \label{remark:Psi_is_IG}
    Note that $\Psi$ defined in \eqref{eq:def_Psi} has the explicit expression
$$
\Psi(t) = -i\,h(t) + \frac{1+\lambda}{\nu^2}\, \left(1 - \sqrt{1 - 2 i \, \frac{\nu^2}{(1+\lambda )^2}\,\left(f(t) + (1+ \lambda)\, h(t) \right)} \right)\,, \quad t \leq T\,,
$$
where we take the principal branch of the square-root, so that if $Y$ is an Inverse Gaussian process with parameters $\left(\frac{g_0}{1 + \lambda}\,, \frac{g_0^2}{\nu^2}\right)\,$ in the sense of Definition \ref{def:IG_process}, we have
$$
\exp\left(g_0\,\int_0^T \, \Psi(T-t)\,dt\right) = \mathbb{E}\left[\exp\left(i\, \int_0^T\, \left(f(T-t) + (1+\lambda)\,h(T-t) \right)\, dY_t - i \, g_0\,\int_0^T\,h(T-t)\,dt\right) \right]\,,
$$
using the expression of the characteristic function of the integral of a left-continuous function with respect to a Levy process given in \citet[Lemma~15.1]{tankov2003financial}.
\end{remark}

\section{Proofs}\label{S:Proofs}
\subsection{Proof of Theorem \ref{theorem:weak_convergence}}
\label{subsec:proof_weak_convergence}
Let $Y = \left(Y_t\right)_{0 \leq t \leq T}$ be an IG process with parameters $\left(\frac{g_0}{1 + \lambda}\,, \frac{g_0^2}{\nu^2}\right)$ (see Definition \ref{def:IG_process}).

Our proof is based on the variant of the Skorokhod representation theorem given by Proposition \ref{proposition:skorokhod_representation_non_metrizable}, applied to the topological space $\left(\mathbb{D}_T^2\,, M_1 \otimes S\right)\,$. Consider any subsequence $\left(X^{n_k}\,, M^{n_k}\right)_{k \geq 0}\,$, it is $M_1 \otimes S$-tight thanks to Lemma \ref{lemma:tightness}, so that we only need to construct a countable family of continuous functions, separating points of $\left(\mathbb{D}_T^2\,, M_1 \otimes S\right)\,$. Since $(\mathbb{D}_T\,, M_1)$ is Polish, there exists a countable sequence $\left(f^1_n\right)_{n \geq 0}$ of $M_1$-continuous functions that separate its points. For example, take $\left(x_n\right)_{n \geq 0}$ a countable dense family (from separability) and define $\left(f_n^1\right)_{n \geq 0} := \left(d_{M_1}\left(\,\cdot\,, x_n\right)\right)_{n \geq 0}$ where $d_{M_1}$ is the metric of the $M_1$ topology. Moreover, from Proposition \ref{proposition:S_properties}, there also exists such a family $\left(f^2_n\right)_{n \geq 0}$ for the $S$ topology. Defining on $\mathbb{D}_T^2\,$, for any $n \geq 0\,$, $\tilde{f}^1_n : (x, y) \mapsto f^1_n(x)$ and $\tilde{f}^2_n : (x,y) \mapsto f^2_n(y)\,$, the functions $\left(\tilde{f}^1_n\,, \tilde{f}^2_n\right)_{n \geq 0}$ are $M_1 \otimes S$-continuous and form a countable family separating points in $\left(\mathbb{D}_T^2\,, M_1 \otimes S\right)\,$. Thus, we can apply Proposition \ref{proposition:skorokhod_representation_non_metrizable} to $\left(X^{n_k}\,, M^{n_k}\right)_{k \geq 0}\,$, based on the measurability check given by Remark \ref{remark:product_sigma_field}. There exists a further subsequence $\left(X^{n_{k_l}}\,, M^{n_{k_l}}\right)_{l \geq 0}\,$, as well as càdlàg processes $\left(\hat{X}^l\,, \hat{M}^l\right)_{l \geq 0}\,$, $\hat{X}$ and $\hat{M}$ defined on $\left([0\,, 1]\,, \mathcal{B}_{[0\,,1]}\,, dt\right)\,$ such that:
\begin{itemize}
    \item[$\bullet$] For any $l\geq 0\,$, $\left(X^{n_{k_l}}\,, M^{n_{k_l}} \right) \sim \left(\hat{X}^l\,, \hat{M}^l\right)\,$.
    \item[$\bullet$] For any $\omega \in [0\,,1]\,$, $\left(\hat{X}^l(\omega)\,, \hat{M}^l(\omega)\right) \overset{M_1 \otimes S}{\longrightarrow} \left(\hat{X}(\omega)\,, \hat{M}(\omega)\right)\,$.
    \item[$\bullet$] For any $\varepsilon > 0\,$, there exists a $M_1 \otimes S$-compact $K_{\varepsilon} \subset \mathbb{D}_T^2$ such that 
        $$
        \mathbb{P}\left(\left\{\left(\hat{X}^l \,, \hat{M}^l\right) \overset{M_1 \otimes S}{\longrightarrow} \left(\hat{X}\,, \hat{M}\right)\right\} \cap \left\{\left(\hat{X}^l \,, \hat{M}^l\right) \in K_{\varepsilon}\,, \, l \geq 0 \right\} \right) > 1 - \varepsilon\,.
        $$
\end{itemize}
If we manage to show that $\left(\hat{X}\,, \hat{M}\right) \sim \left(Y\,, (1+ \lambda)\, Y - G_0\right)$ independently of the chosen subsequence, then Theorem \ref{theorem:weak_convergence} is proved. To see this, we consider the two parts of the theorem separately:
\begin{itemize}
    \item[$\bullet$] \textbf{Weak convergence:} for any $f : \mathbb{D}_T^2 \to \R\,$, $M_1 \otimes S$-continuous and bounded, consider the sequence $\left(u_n\right)_{n \geq 0} := \left(\mathbb{E}\left[f\left(X^n\,, M^n\right)\right]\right)_{n \geq 0}\,$. It is bounded due to the boundedness of $f\,$. Moreover, for any convergent subsequence $\left(u_{n_k}\right)_{k \geq 0}\,$, its limit must be $\mathbb{E}\left[f\left(Y \,, (1 + \lambda)\,Y - G_0\right)\right]\,$. This proves $\left(X^n\,, M^n\right) \overset{M_1 \otimes S}{\implies} \left(Y\,, (1 + \lambda)\, Y - G_0\right)\,$.
    \item[$\bullet$] \textbf{Almost sure representation:} taking the subsequence $n_k = n\,$, we have the existence of a subsequence $\left(X^{n_l}\,, M^{n_l}\right)_{l \geq 0}\,$, as well as càdlàg processes $\left(\hat{X}^l\,, \hat{M}^l\right)_{l \geq 0}\,$, $\hat{X}$ and $\hat{M}$ defined on $\left([0\,,1]\,, \mathcal{B}_{[0\,,1]}\,, dt\right)\,$, satisfying (\textit{i}), (\textit{ii}) and (\textit{iii}) from Theorem \ref{theorem:weak_convergence}. 
\end{itemize}
The problem is thus reduced to the identification of the law of $\left(\hat{X}\,, \hat{M}\right)$ using the convergence of the characteristic functional given by Lemma \ref{lemma:characteristic_convergence}.

Firstly, we remark that we have $M^{n_{k_l}} \overset{*}{\longrightarrow}_{\mathcal{D}} \hat{M}$ in the sense of Definition \ref{definition:cv_D}, which corresponds to a form of \textit{almost sure representation in compacts} for the convergence in the $S$ topology. Moreover, from Lemma \ref{lemma:estimates}, for any $l \geq 0\,$, $M^{n_{k_l}}$ is a $\left(\mathcal{F}^{n_{k_l}}_t\right)_{t\leq T}$-martingale, and
    $$
    \sup_n \, \mathbb{E}\left[\sup_{0 \leq t \leq T}\, \left| M^n_t\right|^2\right] \leq 4\,\nu^2\,\sup_n\, \mathbb{E}\left[X^n_T\right] \leq 4\, \nu^2\, \sup_n \,G_0^n(T) < + \infty\,,
    $$
    using the BDG inequality \citep[Theorem~IV.54]{protter2005stochastic}.
    Therefore, $\hat{M}$ is a square-integrable martingale with respect to its own filtration, in virtue of Proposition \ref{proposition:S_martingale_stability} which gives a form of stability of the martingale property for the $S$ topology. Thus, we can define Itô integrals with respect to it. Similarly, $\hat{X}$ is almost surely non-decreasing thanks to Proposition \ref{proposition:M_1_local_uniform}, so that we can define Stieltjes integrals with respect to its paths.

Secondly, for any continuous $f\,, h : [0\,,T] \to \R\,$, Lemma \ref{lemma:characteristic_convergence} gives us
    $$
    \mathbb{E}\left[\exp\left(i\, \left[f * d\hat{X}^l\right](T) + i \, \left[h * d\hat{M}^l\right](T)\right) \right] \overset{l \to \infty}{\longrightarrow} \exp\left(g_0\,\int_0^T\, \Psi(T-t)\, dt\right)\,,
    $$
    where we used the convolution notation introduced in \eqref{eq:def_convolution_diff} and $\Psi$ is given explicitly in Remark \ref{remark:Psi_is_IG}. In the remainder of this proof, we write $x * y$ instead of $(x * y)(T)$ since all integrals are taken between $0$ and $T\,$.
    For $f$ and $h$ of class $\mathcal{C}^1\,$, we can write
    $$
    \int_0^T \, f(T-t)\, d\hat{X}^l_t = f(0)\, \hat{X}^l_T + \int_0^T\,f'(t)\, \hat{X}^l_{T-t}\, dt\,, \quad l \geq 0\,,
    $$
    which converges almost surely to $f(0)\, \hat{X}_T + \int_0^T \, f'(t)\, \hat{X}_{T-t}\, dt$ from Proposition \ref{proposition:M_1_local_uniform} and Remark \ref{remark:M_1_cv_implies_bounded}, using the dominated convergence theorem.
    Similarly,
    $$
    \int_0^T\, h(T-t)\, d\hat{M}^l_t = h(0)\, \hat{M}^l_T + \int_0^T\, h'(t)\,\hat{M}^l_{T-t}\, dt \overset{l \to \infty}{\longrightarrow} h(0) \, \hat{M}_T + \int_0^T \,h'(t)\, \hat{M}_{T-t}\, dt
    $$
    almost surely, using Remark \ref{remark:S_cv_pointwise_at_T} and the last point of Proposition \ref{proposition:S_properties}. Since $\hat{X}$ is non-decreasing and $\hat{M}$ is a martingale, we have proved that, almost surely,
    $$
    \int_0^T \, f(T-t)\, d\hat{X}^l_t \overset{l \to \infty}{\longrightarrow} \int_0^T \, f(T-t)\, d\hat{X}_t\,,
    $$
    $$
    \int_0^T \, h(T-t)\, d\hat{M}^l_t \overset{l \to \infty}{\longrightarrow} \int_0^T\, h(T-t)\, d\hat{M}_t\,.
    $$
    By boundedness and continuity of $x \mapsto e^{ix}\,$, we get
    $$
    \mathbb{E}\left[\exp\left(i\, \left[f * d\hat{X}^l\right] + i \, \left[h * d\hat{M}^l\right]\right) \right] \overset{l \to \infty}{\longrightarrow} \mathbb{E}\left[\exp\left(i\, \left[f * d\hat{X}\right] + i \,\left[ h * d\hat{M}\right]\right) \right]\,,
    $$
    so that finally
    $$
    \mathbb{E}\left[\exp\left(i\, \left[f * d\hat{X}\right] + i \, \left[h * d\hat{M}\right]\right)\right] = \exp\left(g_0\,\int_0^T \, \Psi(T-t)\, dt\right)\,.
    $$
    From Remark \ref{remark:Psi_is_IG}, this means that for any $f\,,h$ of class $\mathcal{C}^1\,$,
    \begin{equation}
    \label{eq:convolution_charac_X_M}
    \mathbb{E}\left[\exp\left(i\, \left[f * d\hat{X}\right] + i \, \left[h * d\hat{M}\right]\right)\right] = \mathbb{E}\left[\exp\left(i\, \left[(f + (1 + \lambda)\,h) * dY\right] - i \, \left[h * dG_0\right]\right) \right]
    \end{equation}
    where $Y$ is the IG process defined at the beginning of this proof.
    Let $r \in \N\,$, $(u_1\,,...\,,u_r)$ and $(v_1\,,...\,,v_r)$ in $\R^r$ and $0 = t_0 < t_1 < t_2 < ... < t_r =  T\,$. We want to prove that 
    \begin{equation}
    \label{eq:finite_dimensional_laws}
    \mathbb{E}\left[\exp\left(i\, \sum_{j=1}^r \, \left[u_j\,\hat{X}_{t_j} + v_j \, \hat{M}_{t_j}\right]\right) \right] = \mathbb{E}\left[\exp\left(i\,\sum_{j = 1}^r \, \left[\left(u_j + (1+\lambda)\, v_j \right)\, Y_{t_j} - v_j \,G_0(t_j)\right]\right) \right]\,.
    \end{equation}
    Introducing 
    $$
    \tilde{u}_i := \sum_{j = i}^r\,u_j\,, \quad 1 \leq i \leq r\,,
    $$
    we have
    $$
    \sum_{i = 1}^r\,u_i\,\hat{X}_{t_i} = \sum_{i = 1}^r\, \tilde{u}_i \, \left(\hat{X}_{t_i} - \hat{X}_{t_{i - 1}}\right)\,.
    $$
    For $k > \left(\min\limits_{1 \leq i \leq r}\, t_i - t_{i-1} \right)^{-1}$, we can define
    $$
    f_k(t) := \begin{cases}
        \tilde{u}_i\,, \quad &\text{if $T - t_i \leq t < T - t_{i-1} - \frac{1}{k}$ for $2 \leq i \leq r\,$,}\\
        \tilde{u}_{i} + (\tilde{u}_{i-1} -\tilde{u}_{i})\,\phi\left(1 - k\,(T - t_{i-1} - t) \right)\,, \quad &\text{if $T - t_{i -1} - \frac{1}{k} \leq t < T - t_{i-1}$ for $2 \leq i \leq r\,$,}\\
        \tilde{u}_1\,, \quad &\text{if $T - t_1 \leq t \leq T$}\,,
    \end{cases}
    $$
    where $\phi : [0, 1] \to [0,1]$ is any non-decreasing function of class $\mathcal{C}^1$ such that $\phi(0) = \phi'(0) = \phi'(1) = 0$ and $\phi(1) = 1\,$. For example, one can use $\phi : x \mapsto 3x^2 - 2x^3\,$. We can easily check that $f_k$ is of class $\mathcal{C}^1$ on $[0\,,T]$ for any $k\,$, along with 
    $$
    \sup_k\, \left\| f_k\right\|_{\infty} \leq \max_{1 \leq i \leq r}\, \left|\tilde{u}_i\right|\,,
    $$
    and the fact that $\left(f_k\right)_{k}$ converges pointwise to 
    $$
    t \mapsto  \tilde{u}_1 \, \1_{[T-t_1,T]}(t) + \sum_{i = 2}^r\, \tilde{u}_i\, \1_{[T - t_i,T - t_{i-1})}(t)  \,.
    $$
    Using the dominated convergence theorem, we obtain almost surely
    $$
    \int_0^T\,f_k(T-t)\, d\hat{X}_t \overset{k \to \infty}{\longrightarrow} \sum_{i = 1}^r \, \tilde{u}_i\, \left(\hat{X}_{t_i} - \hat{X}_{t_{i - 1}}\right) = \sum_{i = 1}^r\, u_i\, \hat{X}_{t_i}\,.
    $$
    Replacing $(u_1\,,...\,,u_r)$ with $(v_1\,, ... \,,v_r)\,$, we can construct a sequence of functions $\left(h_k\right)_k$ of class $\mathcal{C}^1$ such that
    $$
    \int_0^T\,h_k(T-t)\,d\hat{M}_t \overset{k \to \infty}{\longrightarrow} \sum_{i = 1}^r \, v_i\, \hat{M}_{t_i}\,, 
    $$
    in probability, using the dominated convergence theorem for Itô integrals, see \citet[Theorem~I.4.31]{jacod2013limit}. Finally, we have 
    $$
    \mathbb{E}\left[\exp\left(i\, \left[f_k * d\hat{X}\right] + i \, \left[h_k * d\hat{M} \right] \right) \right] \overset{k \to \infty}{\longrightarrow} \mathbb{E}\left[\exp\left(i\, \sum_{j = 1}^r\, u_j\, \hat{X}_{t_j} + i \, \sum_{j = 1}^r\, v_j\, \hat{M}_{t_j} \right)\right]
    $$
    since the sequence is almost surely bounded by $1$ and must therefore converge to its unique accumulation point, which is given by the convergence in probability of the integrals.
    Since the Inverse Gaussian process $Y$ and the function $G_0$ are non-decreasing, the same reasoning gives 
    $$
    \mathbb{E}\left[\exp\left(i\, \left[(f_k + (1 + \lambda)\,h_k) * dY\right] - i\,\left[h_k * dG_0\right] \right)\right] \overset{k \to \infty}{\longrightarrow} \mathbb{E}\left[\exp\left(i\, \sum_{j = 1}^r\,\left(u_j + (1+\lambda)\,v_j\right)\, Y_{t_j}  - v_j\,G_0(t_j)\right) \right]\,.
    $$
    Combining these two results with \eqref{eq:convolution_charac_X_M} shows \eqref{eq:finite_dimensional_laws}. Thus $\left(\hat{X}\,, \hat{M}\right) \sim \left(Y\,, (1 + \lambda)\, Y - G_0\right)\,$, which concludes the proof.

\subsection{Proof of Lemma \ref{lemma:estimates}}
\label{subsec:proof_estimates}
We first prove the martingality of $M^n$ for all $n \geq 0\,$, from which we easily obtain the estimates on $\left(X^n\right)_{n \geq 0}\,$. Beforehand, we need to introduce the notion of \textit{resolvent} of a convolution kernel.

\paragraph{Resolvent of the fractional kernel.}
For any $K \in L^1_{loc}(\R^+)\,$, there exists a unique $R \in L^1_{loc}(\R^+)\,$, called its \textit{resolvent} or \textit{resolvent of the second kind} \citep[Theorem~2.3.1]{gripenberg1990volterra}\footnote{The book defines the resolvent as the solution to $K * R = K - R$ instead of (\ref{eq:resolvent_def}), which corresponds to $-1$ times the resolvent of $-K\,$. Therefore the existence and uniqueness still hold.}, satisfying 
\begin{equation}
    \label{eq:resolvent_def}
\left(K * R\right)(t) = \left(R * K\right)(t) = R(t) - K(t) \,, \quad t \geq 0\,,
\end{equation}
where we use the convolution notation introduced in \eqref{eq:def_convolution}.
For $n \geq 0\,$,
and $\alpha \geq 0\,$, the resolvent of $\alpha\, K^n$ (see \eqref{eq:Kn}) is given by
\begin{equation}
    \label{eq:fractional_resolvent}
R_{\alpha}^n(t) := \alpha \, h_n\,\Gamma\left(h_n\right) t^{h_n - 1}\, E_{h_n, h_n}\left(\alpha \, h_n\,\Gamma\left(h_n\right)\,t^{h_n} \right)\,, \quad t > 0\,,
\end{equation}
where $h_n := H^n + 1/2$ and $E_{a,b}(z) := \sum_{k = 0}^{+ \infty}\, \frac{z^k}{\Gamma(ak + b)}$ is the Mittag-Leffler function (see for instance \citet[4.4.5]{gorenflo2020mittag}, using $E_{\beta, \beta}(z) = \Gamma(\beta)^{-1} +z\,E_{\beta, 2\beta}(z)$).

\paragraph{Martingality of $M^n\,$.} 
Let $n \geq 0\,$, for any $k \geq 0$ we define the stopping time $\tau_k := \inf \{s \geq 0 \,:\, X^n_s \, \vee \left| M^n_s\right| \geq k \}\, \wedge \, T\,$, so that $\left(\tau_k\right)_{k \geq 0}$ almost surely converges to $T$ as $k$ goes to infinity. By continuity, the stopped process $\left(X^n\right)^{\tau_k} := \left(X^n_{t\,\wedge\,\tau_k}\right)_{ t \leq T}$ is bounded by $k$ and $\left(M^n \right)^{\tau_k} := \left(M^n_{t \,\wedge\,\tau_k}\right)_{t \leq T}$ is a true square-integrable martingale with quadratic variation $\nu^2 \left(X^n\right)^{\tau_k}\,$. By nonnegativity of $\lambda$ and $X^n\,$, we have
\begin{equation}
\label{eq:X_first_inequality}
X^n_t \leq G_0^n(t) + \int_0^t \, M^n_{t-s}\, K^n(s)\, ds\,, \quad t \leq T\,,
\end{equation}
from \eqref{eq:definition_sequence},
which implies
\begin{align*}
    X^n_{t \, \wedge \, \tau_k} &\leq G_0^n(t \, \wedge \, \tau_k) + \int_0^{t \, \wedge \, \tau_k}\, \left|M^n_{t \, \wedge \, \tau_k - s}\right|\, K^n(s) \, ds \\
    &\leq G_0^n(T) + \int_0^t \, \left|M^n_{t \, \wedge \tau_k - s}\right| \, \1_{s \leq \tau_k}\, K^n(s)\, ds \,, \quad t \leq T\,, \quad k \geq 0\,.
\end{align*}
We then remark that for any $k \geq 0$ and $s \leq t \leq T\,$, $\left| M^n_{t \, \wedge \, \tau_k - s} \right|\, \1_{s \leq \tau_k} \leq \sup\limits_{0 \leq u \leq t - s}\, \left| \left( M^n \right)^{\tau_k}_{u} \right|$ almost surely, so that by the BDG inequality \citep[Theorem~IV.54]{protter2005stochastic},
$$
\mathbb{E}\left[\left|M^n_{t \,\wedge \, \tau_k -s} \right|^2 \, \1_{s \leq \tau_k}\right] \leq \mathbb{E}\left[\left(\sup_{0 \leq u \leq t-s} \, \left|\left(M^n\right)^{\tau_k}_u \right|\right)^2\right] \leq 4\,\nu^2\,\mathbb{E}\left[\left(X^n\right)^{\tau_k}_{t-s}\right]\,, \quad s \leq t \leq T\,, \quad k \geq 0\,,
$$
and therefore,
\begin{align*}
    \mathbb{E}\left[X^n_{t \, \wedge \, \tau_k} \right] &\leq G_0^n(T) + \int_0^t \, \left(1 + 4\,\nu^2\,\mathbb{E}\left[X^n_{(t-s) \, \wedge \, \tau_k}  \right] \right)\, K^n(s)\, ds \\
    &\leq G_0^n(T) + \int_0^T\, K^n(s)\, ds + 4\, \nu^2\, \int_0^t \,\mathbb{E}\left[X^n_{s \, \wedge \, \tau_k}  \right]\, K^n(t-s)\, ds\,, \quad t \leq T\,, \quad k \geq 0\,. 
\end{align*}
Thus, knowing that $R^n_{4\,\nu^2}$ is nonnegative from its definition \eqref{eq:fractional_resolvent}, and using the Grönwall lemma for convolution inequalities from \citet[Theorem~9.8.2]{gripenberg1990volterra}, we get
$$
\mathbb{E}\left[X^n_{t \, \wedge \, \tau_k} \right] \leq \left(G_0^n(T) + \int_0^T\, K^n(s)\, ds  \right) \, \left(1 + \int_0^T\, R^n_{4 \, \nu^2} (s) \, ds \right) \,, \quad t \leq T\,, \quad k \geq 0\,.
$$
Finally, applying Fatou's lemma as $k \to \infty\,$, we have $\mathbb{E}\left[X^n_t \right] < + \infty$ for any $0 \leq t \leq T\,$. According to \citet[Theorem~II.27, Corollary~3]{protter2005stochastic}, a local martingale with integrable quadratic variation is a true square-integrable martingale. Therefore, $M^n$ is a true martingale on $[0\,, T]$ with $\mathbb{E}\left[\left(M^n_t\right)^2 \right] = \nu^2\, \mathbb{E}\left[X^n_t\right] < + \infty\,$ for all $t \leq T\,$.

\paragraph{Estimates on $X^n\,$.}
We can now go back to \eqref{eq:X_first_inequality} and take the expectation to obtain
$$
\mathbb{E}\left[X^n_t\right] \leq G_0^n(t)\,, \quad t \leq T\,.
$$
One can also write
\begin{align*}
    X^n_t - X^n_s &= G_0^n(t) - G_0^n(s) + \int_0^s \left[-\lambda \,\left(X^n_{t - u} - X^n_{s - u}\right) + M^n_{t-u} - M^n_{s - u} \right]\, K^n(u)\, ds \\
    &\quad + \int_s^t \left(- \lambda \, X^n_{t-u} + M^n_{t-u}\right)\, K^n(u) \, du\\
    &\leq G_0^n(t) - G_0^n(s) + \int_0^t \, \left(M^n_{t-u} - M^n_{s-u} \right)\, K^n(u)\, du + \int_s^t\, M^n_{t-u}\, K^n(u)\, du\,, \quad s \leq t \leq T\,,
\end{align*}
where we used $X^n_{t-u} - X^n_{s-u} \geq 0$ since $X^n$ is non-decreasing, which gives 
$$
\mathbb{E}\left[X^n_t - X^n_s \right] \leq G_0^n(t) - G_0^n(s)\,, \quad s \leq t \leq T\,.
$$

\subsection{Proof of Lemma \ref{lemma:tightness}}
\label{subsec:proof_tightness}
\paragraph{$M_1$-tightness of $\left(X^n\right)_{n\geq 0}\,$.}
Since for any $n \geq 0\,$, $X^n$ is a non-decreasing, nonnegative process, starting from $0\,$, the conditions from Proposition \ref{proposition:M_1_tightness} for tightness in the $M_1$ topology are reduced to:
    \begin{itemize}
    \item[$\bullet$] $\lim\limits_{R \to + \infty} \, \sup\limits_n \, \mathbb{P}\left( X^n_T > R\right) = 0$\,,
    \item[$\bullet$] for all $\eta > 0 \,,\,\, \lim\limits_{\delta \to 0^+}\, \sup\limits_n \, \mathbb{P}\left( X^n_{\delta} \, \vee \, \left(X^n_{T} - X^n_{T - \delta} \right) \geq \eta\right) = 0$\,.
\end{itemize}
Using Lemma \ref{lemma:estimates} and Markov's inequality, we first have 
$$
\sup_n \, \mathbb{P}\left(X_T^n > R \right) \leq \frac{\sup_n\,G_0^n(T)}{R} \,\overset{R \to \infty}{\longrightarrow}\, 0\,.
$$
Then, using the definition of $\left(G_0^n\right)_{n \geq 0}$ in \eqref{eq:G_0_def}, we have for $\delta < \min(1\,,T)\,$,
\begin{align*}
\sup_n \, \mathbb{E}\left[X_{\delta}^n \right] &\leq \sup_n \, G_0^n(\delta) \\
&\leq \delta \, \left(V_0 + \sup_n\,\frac{\lambda \, \theta}{H^n + 3/2}\, \delta^{H^n + 1/2}\right)\\
&\leq \delta \, \left(V_0 + \lambda \, \theta \right)\,,
\end{align*}
and similarly,
\begin{align*}
\sup_n \, \mathbb{E}\left[X_T^n - X_{T - \delta}^n \right] &\leq \sup_n \,\left(G_0^n(T) - G_0^n(T - \delta)\right) \\
&\leq V_0 \, \delta + \sup_n \, \frac{\lambda \, \theta}{H^n + 3/2}\, \left(T^{H^n + 3/2} - \left(T - \delta\right)^{H^n + 3/2}\right) \\
&\leq \delta \, \left(V_0 + \lambda \, \theta \, \sup_n \, T^{H^n + 1/2}\right)\,,
\end{align*}
using the two estimates on $\left(X^n\right)_{n \geq 0}$ given in Lemma \ref{lemma:estimates}. Therefore,
$$
\lim_{\delta \to 0^+}\, \sup_n \, \mathbb{E}\left[X^n_{\delta}\right] = \lim_{\delta \to 0^+}\, \sup_n \, \mathbb{E}\left[X^n_T - X^n_{T - \delta}\right] = 0\,,
$$
which concludes the proof. The sequence $\left(X^n\right)_{n \geq 0}$ is tight for the $M_1$ topology.

\paragraph{$S$-tightness of $\left(M^n\right)_{n\geq 0}\,$.} The condition from Proposition \ref{proposition:S_tightness_supermartingale} for $S$-tightness of a sequence of supermartingales is easily verified for $\left(M^n\right)_{n \geq 0}\,$. To see this, we apply Lemma \ref{lemma:estimates},
$$
\mathbb{E}\left[\left|M^n_t \right| \right] \leq 1 + \nu^2\, \mathbb{E}\left[X^n_t \right] \leq 1 + \nu^2\, G_0^n(t)\,, \quad t \leq T\,, \quad n \geq 0\,.
$$
Since for all $n \geq 0$ the function $G_0^n$ is non-decreasing,
$$
\sup_{0\leq t \leq T}\, \mathbb{E}\left[\left|M^n_t\right|\right] \leq 1 + \nu^2\,G_0^n(T)\,, \quad n \geq 0\,,
$$
and finally, using $G_0^n(T) \overset{n \to \infty}{\longrightarrow} g_0\,T$ (see \eqref{eq:G_0_n_def}), we get
$$
\sup_n \, \sup_{0 \leq t \leq T}\, \mathbb{E}\left[\left|M^n_t\right|\right] \leq 1 + \nu^2\, \sup_n\, G_0^n(T) < + \infty\,.
$$
The sequence $\left(M^n\right)_{n \geq 0}$ is $S$-tight.

\paragraph{$M_1 \otimes S$-tightness of $\left(X^n\,, M^n\right)_{n\geq 0}\,.$}
By Tychonoff's theorem, the cartesian product of a $M_1$-compact set with a $S$-compact set is $M_1 \otimes S$-compact. Therefore, tightness of $(X^n)_{n \geq 0}$ for $M_1$ along with tightness of $(M^n)_{n \geq 0}$ for $S$ gives the $M_1 \otimes S$-tightness of $(X^n\,, M^n)_{n\geq 0}\,$.

\subsection{Proof of Lemma \ref{lemma:characteristic_convergence}}
\label{subsec:proof_characteristic_convergence}
The continuity of $\Psi$ is direct from the form given in Remark \ref{remark:Psi_is_IG} since $f$ and $h$ are continuous and the real part of the complex number under the square-root is positive.

We aim to prove the convergence 
\begin{equation}
\label{eq:goal_cv_characteristic_proof}
\mathcal{I}^n := \int_0^T \, F\left(T-t\,, \Psi^n(T-t) \right)\,dG_0^n(t) \overset{n \to \infty}{\longrightarrow} \mathcal{I} := \int_0^T \, F\left(T-t\,, \Psi(T-t)\right)\,dG_0(t)\,.
\end{equation}
First, we note that it is enough to find a constant $C$ such that
\begin{equation}
    \label{eq:F_Psi_n_bound}
\left|F\left(t\,, \Psi^n(t)\right)\right| \leq C\,, \quad t \leq T \,, \quad n \geq 0\,,
\end{equation}
and to show that
\begin{equation}
    \label{eq:delta_F_to_0_L_1}
    \int_0^T\, \left|F\left(t\,, \Psi^n(t)\right) - F\left(t\,, \Psi(t)\right) \right|\,dt \overset{n \to \infty}{\longrightarrow} 0\,.
\end{equation}
If that is the case, recalling that $dG_0^n(t) = \left(V_0 + \lambda\, \theta \, t^{H^n + 1/2}\right)\,dt$ and $dG_0(t) = g_0\,dt$ (see \eqref{eq:G_0_n_def} and \eqref{eq:G_0_def}),
\begin{align*}
    \left| \mathcal{I}^n - \mathcal{I}\right| &\leq \int_0^T \, \left|F\left(T-t\,, \Psi^n(T-t)\right) - F\left(T-t\,, \Psi(T-t)\right) \right| \,dG_0(t) \\
    &\quad + \int_0^T \, \left| F\left(T-t\,, \Psi^n(T-t)\right)\right|\, \left|dG_0(t) - dG_0^n(t)\right| \\
    &\leq g_0 \, \int_0^T\,\left|F\left(t\,, \Psi^n(t)\right) - F\left(t\,, \Psi(t)\right) \right| \,dt + C\, \lambda \, \theta \, \int_0^T \, \left| 1 - t^{H^n + 1/2} \right|\,dt\,,\quad n \geq 0\,,
\end{align*}
which vanishes as $n$ goes to infinity thanks to \eqref{eq:delta_F_to_0_L_1} and to the dominated convergence theorem, yielding \eqref{eq:goal_cv_characteristic_proof}.

It remains to prove \eqref{eq:delta_F_to_0_L_1} and the existence of  a constant $C$ satisfying \eqref{eq:F_Psi_n_bound}. We rely on the comparison result for linear Volterra equations given in \citet[Theorem~C.3]{abi2019multifactor}\footnote{The theorem was proved for $K \in L^2_{loc}$ but the proof only uses $K \in L^1_{loc}$ (as well as the existence of solutions to the linear Volterra equations and some properties on the resolvent of the first kind, which are given by \citet[Example~2.4, Theorem~2.5]{abi2021weak} since for any $n \geq 0$ the kernel $K^n$ is completely monotone). Thus, we can use this result in our context.}. Introducing $\alpha := \nu^2\,/\,2$ as well as $\rho := - \lambda + i\nu^2\, h$ and $\xi := i\,f - \nu^2\,h^2 \,/\,2\,$, we have by definition
$$
F\left(t,u\right) = a\,u^2 + \rho(t)\,u + \xi(t)\,, \quad t \leq T \,, \quad u \in \mathbb{C}\,.
$$
We recall that $\left(\Psi^n\right)_{n \geq 0}$ satisfies on $[0\,,T]\,$:
$$
\Psi^n = K^n * \left(a\,\left(\Psi^n\right)^2 + \rho\,\Psi^n + \xi\right) = K^n * \left(z^n \, \Psi^n + \xi\right)\,, \quad n \geq 0\,,
$$
where $z^n := a \, \Psi^n + \rho\,$. Thus, applying \citet[Theorem~C.3]{abi2019multifactor}, for all $n \geq 0$ we have $\left|\Psi^n\right| \leq \chi^n$ on $[0\,,T]\,$, where $\chi^n \geq 0$ is the solution to 
$$
\chi^n(t) = \left(K^n * \left[\Re\left(z^n\right)\chi^n + \left|\xi\right|\,\right]\right) (t)\,, \quad t \leq T\,.
$$
Since $\Re\left(z^n\right) = \nu^2 \, \Re\left(\Psi^n\right)\,/\,2 - \lambda \leq 0$ (recall that $\Psi^n$ is known to have nonpositive real part for all $n \geq 0$ from \citet[Theorem~2.5]{abi2021weak}), 
$$
\chi^n(t) \leq \left|\xi(t)\right|\,\int_0^T\, K^n(s)\,ds = \left|\xi(t)\right| \,T^{H^n + 1/2}\,, \quad t \leq T \,, \quad n \geq 0\,,
$$
which can be bounded uniformly in $t$ and in $n$ since $\xi$ is continuous and $H^n \to -1/2\,$. Thus, 
$$
\sup_n \, \sup_{0 \leq t \leq T}\, \left| \Psi^n(t) \right| < + \infty\,,
$$
and from the continuity of $\rho$ and $\xi\,$, we can find a constant $C \geq 0$ satisfying \eqref{eq:F_Psi_n_bound}. We now introduce 
$$
\Delta^n(t) := \int_0^t\, \Psi(s)\, K^n(t-s)\,ds - \Psi(t)\,, \quad t \leq T\,, \quad n \geq 0\,.
$$
Recalling that $\Psi(t) = F\left(t\,, \Psi(t)\right)$ on $[0\,,T]\,$, we have 
\begin{align*}
    \Psi^n - \Psi &= K^n * \left(a\,\left(\Psi^n\right)^2 + \rho \, \Psi^n + \xi\right) - K^n * \Psi + \Delta^n \\
    &= K^n * \left(a\, \left[\left(\Psi^n\right)^2 - \Psi^2\right] + \rho \, \left[\Psi^n - \Psi\right]\right) + \Delta^n\\
    &= K^n * \left(\tilde{z}^n\, \left(\Psi^n - \Psi\right)\right) + \Delta^n \,, \quad n \geq 0\,,
\end{align*}
where $\tilde{z}^n := a\, \left(\Psi^n + \Psi\right) + \rho\,$.
This can be rewritten as
$$
\Psi^n - \Psi - \Delta^n = K^n * \left(\tilde{z}^n \, \left(\Psi^n - \Psi - \Delta^n\right) + \tilde{z}^n \, \Delta^n\right)\,,\quad n \geq 0\,.
$$
Another application of \citet[Theorem~C.3]{abi2019multifactor} yields 
$$
\left|\Psi^n(t) - \Psi(t) - \Delta^n(t)\right| \leq \tilde{\chi}^n(t) \,, \quad t \leq T\,, \quad n \geq 0\,,
$$
where for each $n \geq 0\,$, the function $\tilde{\chi}^n \geq 0$ is the solution to
$$
\tilde{\chi}^n(t) = \left(K^n * \left[\Re\left(\tilde{z}^n\right) \tilde{\chi}^n + \left|\tilde{z}^n\, \Delta^n\right|\,\right]\right)(t) \,, \quad t \leq T\,.
$$
Since $\Re\left(\tilde{z}^n\right) = \nu^2 \,\Re\left(\Psi^n + \Psi\right) \,/\,2 - \lambda \leq 0$ for any $n\geq 0$ because $\Re\left(\Psi^n\right)$ and $\Re\left(\Psi\right)$ are nonpositive, we get
$$
\left|\Psi^n(t) - \Psi(t) - \Delta^n(t)\right| \leq \left( K^n * \left|\tilde{z}^n \, \Delta^n\right|\right)(t) \,, \quad t \leq T \,, \quad n \geq 0\,,
$$
which gives
$$
\left|\Psi^n(t) - \Psi(t)\right| \leq \left(K^n * \left|\tilde{z}^n \, \Delta^n\right|\right)(t) + \left|\Delta^n(t)\right|\,, \quad t \leq T\,, \quad n \geq 0\,.
$$
Finally,
\begin{align*}
    \left|F\left(t \,, \Psi^n(t)\right) - F\left(t\,, \Psi(t)\right) \right| &= \left| \tilde{z}^n(t) \right| \, \left| \Psi^n(t) - \Psi(t)\right| \\
    &\leq \left| \tilde{z}^n(t)\right| \, \left(K^n * \left| \tilde{z}^n \, \Delta^n\right|\right)\,(t) + \left| \tilde{z}^n(t) \, \Delta^n(t)\right| \,,\quad t \leq T\,, \quad n \geq 0\,.
\end{align*}
We have already proved that $\left(\Psi^n\right)_{n \geq 0}$ is uniformly bounded in $t$ and in $n\,$, so that by continuity of $\Psi$ and $\rho\,$, we can find a constant $\tilde{C} \geq 0$ such that $\left|\tilde{z}^n(t)\right| \leq \tilde{C}$ for all $n \geq 0$ and $t \leq T\,$. Thus, setting 
$$
h^n := \tilde{C}^2\, K^n * \left|\Delta^n\right| + \tilde{C} \, \left|\Delta^n\right| \,, \quad n \geq 0\,,
$$
we have 
\begin{equation}
\label{eq:delta_F_bound_h}
\left|F\left(t\,, \Psi^n(t)\right) - F\left(t\,, \Psi(t)\right)\right| \leq h^n(t) \,, \quad t \leq T\,, \quad n \geq 0\,.
\end{equation}
For any $n\geq 0\,$, the function $h^n$ is continuous and therefore integrable on $[0\,,T]\,$. Applying Fubini's theorem,
\begin{align}
\label{eq:integral_h}
    \int_0^T\, h^n(t)\,dt &= \tilde{C}^2\,\int_0^T \, \left(K^n * \left|\Delta^n\right|\right)(t)\,dt + \tilde{C}\, \int_0^T\, \left|\Delta^n(t)\right|\,dt \nonumber \\
    &= \tilde{C}^2\, \int_0^T\, \left|\Delta^n(s)\right|\, \int_0^{T-s}\,K^n(t)\,dt\,ds + \tilde{C}\, \int_0^T\, \left|\Delta^n(t)\right|\,dt \nonumber \\
    &\leq \tilde{C}\, \left(\tilde{C}\,T^{H^n + 1/2} + 1\right)\, \int_0^T\, \left|\Delta^n(t)\right|\,dt\,, \quad n \geq 0\,.
\end{align}
The weak convergence of the measures $\left(K^n(t) \, dt\right)_{n \geq 0}$ to $\delta_0$ in the strengthened form given by Lemma \ref{lemma:weak_cv_to_dirac}, along with the continuity of $\Psi$, give
$$
\Delta^n(t) \overset{n \to \infty}{\longrightarrow} 0 \,,\quad 0 < t \leq T\,,
$$
and one easily has
$$
\sup_n \, \sup_{0 \leq t \leq T}\, \left|\Delta^n(t)\right| \leq \left(1 + \sup_n \, T^{H^n + 1/2}\right)\, \sup_{0 \leq t \leq T} \, \left|\Psi(t)\right| < + \infty\,,
$$
which proves that $\int_0^T \, \left|\Delta^n(t)\right|\, dt \overset{n \to \infty}{\longrightarrow} 0$ thanks to the dominated convergence theorem. Combining this with \eqref{eq:integral_h} and \eqref{eq:delta_F_bound_h} proves \eqref{eq:delta_F_to_0_L_1}, which concludes the proof.

\newpage
\appendix
\section{Appendix}\label{A:topologies}

\subsection{The $M_1$ Topology}

When wanting to prove weak convergence of càdlàg stochastic processes, one needs to endow $\mathbb{D}_T\,$, the space of càdlàg functions on $[0\,,T]\,$, with a topology making it a Polish space (i.e. separable completely metrizable), in order to use the Prokhorov theorem \citep[Theorem~11.6.1]{whitt2002stochastic}. In his seminal paper, \citet{skorokhod1956limit} introduced several topologies satisfying this property. The most commonly used is the $J_1$ topology obtained by deforming the time over the uniform topology. More precisely, it is generated by the metric
$$
d_{J_1}(f\,, g) := \inf_{\lambda \in \Lambda}\, \left\{\| f \circ \lambda - g \|_{\infty} + \| \lambda \|_{\circ} \right\}\,, \quad \left(f\,,g\right) \in \mathbb{D}_T^2\,,
$$
where $\Lambda$ is the set of increasing bijections on $[0\,,T]\,$, and 
$$
\|\lambda\|_{\circ} := \sup_{0 \leq t \neq s \leq T}\, \left|\log \left(\frac{\lambda(t) - \lambda(s)}{t -s} \right) \right|\,, \quad \lambda \in \Lambda\,,
$$
penalizes the fact that $\lambda$ is ``too far'' from $x \mapsto x\,$. A visualization of the $\varepsilon$-tubes (the ball of radius $\varepsilon\,$, centered in a certain function, for a metrizable topology) obtained in this topology, compared to those of the uniform topology, is given in Figure \ref{fig:eps_tubes} (this plot is inspired by \citet{kern2022skorokhod}). We can observe that the sequence of càdlàg functions $\left(\1_{[1/2 - 1/n\,, 1]}\right)_n$ converges to $\1_{[1/2\,, 1]}$ for the $J_1$ topology on $\mathbb{D}_1\,$, whereas this is not the case for the uniform topology.

\begin{figure}[h]
    \centering
\begin{tikzpicture}
    %Uniform topology
    % Draw axes
    \draw[->] (-0.5,0) -- (4.2,0) node[right] {};
    \draw[->] (0,-0.5) -- (0,4.2) node[above] {};

    % Plot points (x, y)
    \draw[thick] (0, 0.1) -- (0, -0.1) node[below left] {0};
    \foreach \x in {0.5, 1} % Custom tick positions for x-axis
        \draw[thick] (\x * 4, 0.1) -- (\x * 4, -0.1) node[below] {\x};
    
    \draw[thick] (0.1, 4) -- (-0.1, 4) node[left] {1};
    
    % Connect points with lines
    
    \draw[very thick, blue] (0, 0) -- (2, 0);
    \draw[very thick, blue] (2, 4) -- (4, 4);
    \draw[dashed, red] (0, 0.2) -- (2, 0.2);
    \draw[dashed, red] (0, -0.2) -- (2, -0.2);
    \draw[dashed, red] (2, -0.2) -- (2, 0.2);
    \draw[dashed, red] (2, 4.2) -- (4, 4.2);
    \draw[dashed, red] (2, 3.8) -- (4, 3.8);
    \draw[dashed, red] (2, 3.8) -- (2, 4.2);

    %points for càdlàg
    \draw[fill = white] (2, 0) circle (3pt);
    \draw[fill = blue] (2, 4) circle (3pt);

    %J_1 topology
    \draw[->] (4.5, 0) -- (9.2, 0) node[right] {};
    \draw[->] (5, -0.5) -- (5, 4.2) node[above] {};

    \draw[thick] (5, 0.1) -- (5, -0.1) node[below left] {0};
    \draw[thick] (7, 0.1) -- (7, -0.1) node[below] {0.5};
    \draw[thick] (9, 0.1) -- (9, -0.1) node[below] {1};

    \draw[thick] (5.1, 4) -- (4.9, 4) node[left] {1};

    \draw[very thick, blue] (5,0) -- (7, 0);
    \draw[very thick, blue] (7,4) -- (9, 4);
    \draw[dashed, red] (5, 0.2) -- (7.2, 0.2);
    \draw[dashed, red] (5, -0.2) -- (7.2, -0.2);
    \draw[dashed, red] (7.2, -0.2) -- (7.2, 0.2);
    \draw[dashed, red] (6.8, 4.2) -- (9, 4.2);
    \draw[dashed, red] (6.8, 3.8) -- (9, 3.8);
    \draw[dashed, red] (6.8, 3.8) -- (6.8, 4.2);

    \draw[fill = white] (7, 0) circle (3pt);
    \draw[fill = blue] (7, 4) circle (3pt);

    %M_1 topology
    \draw[->] (9.5, 0) -- (14.2, 0) node[right] {};
    \draw[->] (10, -0.5) -- (10, 4.2) node[above] {};

    \draw[thick] (10, 0.1) -- (10, -0.1) node[below left] {0};
    \draw[thick] (12, 0.1) -- (12, -0.1) node[below] {0.5};
    \draw[thick] (14, 0.1) -- (14, -0.1) node[below] {1};
    \draw[thick] (10.1, 4) -- (9.9, 4) node[left] {1};

    \draw[very thick, blue] (10, 0) -- (12, 0);
    \draw[very thick, blue] (12, 4) -- (14, 4);
    \draw[dashed, red] (10, 0.2) -- (11.8, 0.2);
    \draw[dashed, red] (10, -0.2) -- (12.2, -0.2);
    \draw[dashed, red] (11.8, 0.2) -- (11.8, 4.2);
    \draw[dashed, red] (12.2, -0.2) -- (12.2, 3.8);
    \draw[dashed, red] (11.8, 4.2) -- (14, 4.2);
    \draw[dashed, red] (12.2, 3.8) -- (14, 3.8);

    \draw[fill = white] (12, 0) circle (3pt);
    \draw[fill = blue] (12, 4) circle (3pt);
\end{tikzpicture}
\caption{Example of $\varepsilon$-tube (red dashed lines) for the function $\1_{[1/2\,, 1]}$ (blue plain lines) for different topologies. \textbf{Left:} Uniform topology. \textbf{Middle:} $J_1$ topology. \textbf{Right:} $M_1$ topology.}
\label{fig:eps_tubes}
\end{figure}
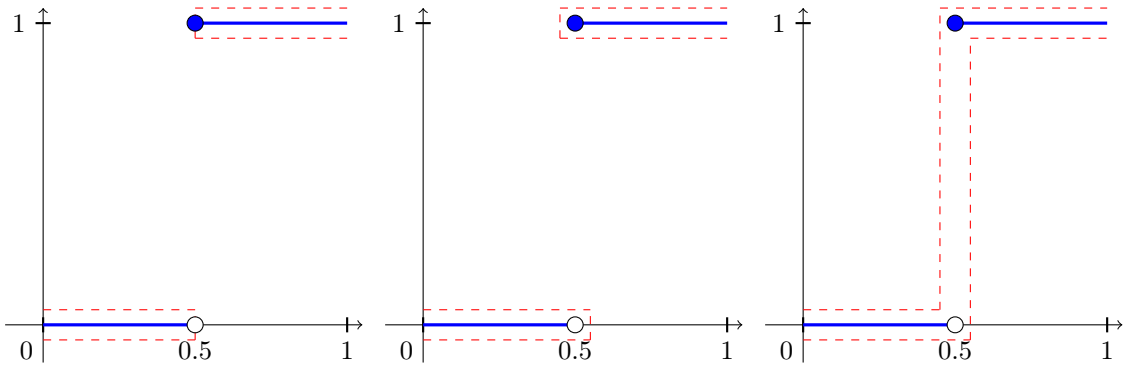

Unfortunately, even though $J_1$ is weaker than the uniform topology, the $J_1$-limit of a sequence of continuous functions must still be continuous \citep[Section~12]{billingsley2013convergence}. For instance, the sequence of piecewise affine functions $\left(f_n\right)_{n \geq 2}$ defined by 
$$
f_n(x) := \begin{cases}
    0 \,, \quad &\text{if $x \in [0\,, 1/2 - 1/n)\,$} \\
    n \, (x - 1 /2) + 1 \,, \quad &\text{if $x \in [1/2 - 1 / n\,, 1/2)$\,} \\
    1 \,, \quad &\text{if $x \in [1/2\,, 1]$\,}
\end{cases} \,, \quad n \geq 2\,,
$$
does not converge to $\1_{[1/2\,, 1]}$ in $\left(\mathbb{D}_1\,, J_1\right)\,$.

To solve this issue, one can consider the weaker $M_1$ topology (see Figure \ref{fig:eps_tubes} to visualize its $\varepsilon$-tubes) that allows the convergence $f_n \overset{n \to \infty}{\longrightarrow} \1_{[1/2\,, 1]}$ and, more generally, allows continuous functions to converge to a discontinuous limit. The idea is to allow for deformations of time and space. It is based on the \textit{thin graph} of a càdlàg function, defined by 
$$
\Gamma_x := \left\{(z\,, t) \in \R \times [0\,,T] \,, \, z \in [x(t-)\,,x(t)] \right\}\,, \quad x \in \mathbb{D}_T\,.
$$
We introduce an order on $\Gamma_x\,$, given by 
$$
(z_1\,, t_1) \leq (z_2\,,t_2) \Longleftrightarrow \left(\text{$t_1 < t_2$ \quad or \quad $t_1 = t_2$ and $|z_1 - x(t-)| \leq |z_2 - x(t-)|$}\right)\,,
$$
which allows to consider non-decreasing functions with values in $\Gamma_x\,$. Then, let 
$$
\Pi_x := \left\{\text{$(\rho\,, \lambda)$ continuous, non-decreasing, mapping $[0\,, 1]$ onto $\Gamma_x$}  \right\}\,, \quad x \in \mathbb{D}_T\,,
$$ be the set of parametrizations of $\Gamma_x\,$, the metric generating $M_1$ is 
$$
d_{M_1}(f,g) = \inf_{\substack{(\rho_f\,, \lambda_f) \in \Pi_f \\ (\rho_g\,, \lambda_g) \in \Pi_g}} \, \left\{\|\rho_f - \rho_g\|_{\infty} + \|\lambda_f - \lambda_g \|_{\infty} \right\}\,, \quad \left(f \,, g\right) \in \mathbb{D}_T^2\,.
$$
Some alternative characterizations of $M_1$-convergence can be found in \citet[Theorem~12.5.1]{whitt2002stochastic}.
It is important to note that the Skorokhod topologies $J_1$ and $M_1$ both imply local uniform convergence around continuity points of the limiting function. They only differ in the way they behave around discontinuity points of the limit.
\begin{proposition}[\citet{whitt2002stochastic}, Theorem~12.5.1]
    \label{proposition:M_1_local_uniform}
    If $d_{M_1}(x_n , x) \overset{n \to \infty}{\longrightarrow} 0\,$, then $x_n(T) \overset{n \to \infty}{\longrightarrow} x(T)$ and for each $t \notin Disc(x)\,$,
    $$
    \lim_{\delta \to 0^+}\, \limsup_n \, v(x_n\,,x\,,t\,,\delta) = 0\,,
    $$
    where
    $$
    v(x_1\,,x_2\,,t\,,\delta) := \sup_{0 \, \vee \, (t - \delta) \leq t_1,t_2 \leq (t+\delta) \, \wedge \, T} \, |x_1(t_1) - x_2(t_2)|\,, \quad \delta > 0 \,, \quad t \leq T\,, \quad \left(x_1\,,x_2\right)\in \mathbb{D}_T^2\,,
    $$
    and $Disc(x) := \left\{t \in [0\,,T]\,:\, x(t-) \neq x(t)\right\}$ is the discontinuity set of $x\,$.
\end{proposition}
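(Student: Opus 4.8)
The plan is to work directly from the definition of the $M_1$ metric, since $d_{M_1}(x_n,x)\to 0$ means precisely that the thin graphs $\Gamma_{x_n}$ admit parametrizations that are uniformly close to a parametrization of $\Gamma_x$. First I would fix, for each $n$, a near-optimal pair $(\rho_n,\lambda_n)\in\Pi_{x_n}$ and a pair $(\rho,\lambda)\in\Pi_x$ with $\|\rho_n-\rho\|_\infty+\|\lambda_n-\lambda\|_\infty\to 0$; only uniform closeness of the parametrizations is ever used, so it is harmless to let the reference parametrization of $x$ depend on $n$, or to pass to a subsequence and fix it. Next I would record two structural facts about ordered parametrizations of thin graphs: such a parametrization starts at $s=0$ at the point of minimal time $(x(0),0)$ and ends at $s=1$ at the point of $\Gamma_x$ that is maximal for the given order, namely $(x(T),T)$ — so that $\lambda(1)=T$, $\rho(1)=x(T)$, and likewise $\rho_n(1)=x_n(T)$. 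The endpoint assertion then falls out at once: $|x_n(T)-x(T)|=|\rho_n(1)-\rho(1)|\le\|\rho_n-\rho\|_\infty\to 0$.

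For local uniform convergence at a continuity point $t\notin Disc(x)$, I would split $v$ via the triangle inequality $v(x_n,x,t,\delta)\le\sup_{|u-t|\le\delta}|x_n(u)-x(t)|+\sup_{|u-t|\le\delta}|x(u)-x(t)|$ (with the obvious $0\vee\cdot$, $\cdot\wedge T$ truncations near the boundary). The $x$-term tends to $0$ as $\delta\to 0^+$ by the elementary fact that $x(t-)=x(t)$ forces small oscillation of $x$ near $t$: otherwise one extracts $u_k\to t$ with $|x(u_k)-x(t)|$ bounded away from $0$, contradicting $x(u)\to x(t)$ as $u\to t^+$ (right continuity) and as $u\to t^-$ (existence of left limits plus continuity at $t$). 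For the $x_n$-term, set $[s^-,s^+]:=\lambda^{-1}(\{t\})$; since $x$ is continuous at $t$ the thin graph over time $t$ is the single point $(x(t),t)$, so $\rho\equiv x(t)$ on $[s^-,s^+]$, and for interior $t$ one has $s^->0$, $s^+<1$ with $\lambda<t$ on $[0,s^-)$ and $\lambda>t$ on $(s^+,1]$. Given $\varepsilon>0$, continuity of $\rho$ gives $\eta>0$ with $|\rho(s)-x(t)|<\varepsilon/2$ on $I_\eta:=[s^--\eta,s^++\eta]\cap[0,1]$; since $\lambda(s^--\eta)<t<\lambda(s^++\eta)$, uniform convergence $\lambda_n\to\lambda$ produces a fixed $\delta>0$ and an $N$ such that for $n\ge N$ one has $\lambda_n(s^--\eta)<t-\delta$ and $\lambda_n(s^++\eta)>t+\delta$, and then monotonicity of $\lambda_n$ forces every parameter $s$ with $\lambda_n(s)\in[t-\delta,t+\delta]$ to lie in $I_\eta$. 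Hence every value $x_n(u)$ with $|u-t|\le\delta$ equals $\rho_n(s)$ for some $s\in I_\eta$, so $|x_n(u)-x(t)|\le\|\rho_n-\rho\|_\infty+|\rho(s)-x(t)|<\varepsilon$ once $n$ is also large enough that $\|\rho_n-\rho\|_\infty<\varepsilon/2$. Thus $\limsup_n\sup_{|u-t|\le\delta}|x_n(u)-x(t)|\le\varepsilon$; shrinking $\delta$ so that the $x$-term is $\le\varepsilon$ too and using monotonicity of $\delta\mapsto v(x_n,x,t,\delta)$ gives $\lim_{\delta\to0^+}\limsup_n v(x_n,x,t,\delta)=0$. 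The boundary cases $t\in\{0,T\}$ are handled by the same argument anchored at $s=0$ or $s=1$, together with the endpoint convergence already established.

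The step I expect to be the main obstacle is the bookkeeping in this last paragraph: one must be sure that shrinking the parameter window $I_\eta$ toward $\lambda^{-1}(\{t\})$ genuinely controls every value $x_n(u)$ for $u$ in a fixed time-neighbourhood of $t$ — that is, that no piece of the graph of $x_n$ near time $t$ gets parametrized outside $I_\eta$. This is precisely where one needs both the monotonicity of the time-change components $\lambda_n$ and the fact that $\lambda$ strictly separates $t$ from the two endpoints of $I_\eta$; the remaining care is in the degenerate configurations ($s^-=0$, $s^+=1$, or $x$ having arbitrarily small jumps accumulating at $t$ though none at $t$), all of which are absorbed by the truncations in the definition of $v$ and by the oscillation estimate. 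Everything else — extracting the parametrizations, identifying the endpoints, and the elementary oscillation fact — is routine; alternatively the statement can simply be quoted from \citet[Theorem~12.5.1]{whitt2002stochastic}, as is done here.
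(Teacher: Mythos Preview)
The paper does not prove this proposition at all: it is quoted as Theorem~12.5.1 of \citet{whitt2002stochastic} and used as a black box. Your proposal therefore goes well beyond what the paper does, supplying a correct self-contained argument from the parametric definition of $d_{M_1}$. The endpoint statement and the triangle-inequality splitting of $v$ are exactly right, and the heart of the local-uniform estimate --- that near a continuity point the thin graph of $x$ collapses to a single point, so closeness of parametrizations forces $x_n$ to be close to $x(t)$ on a whole time-neighbourhood --- is the correct mechanism.

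The only place that needs tightening is the one you yourself flagged: the reference parametrization $(\rho,\lambda)\in\Pi_x$ furnished by the definition of $d_{M_1}$ genuinely depends on $n$, and neither ``letting it depend on $n$'' nor ``passing to a subsequence'' quite saves the argument as written, since $s^-,s^+,\eta,\delta$ are all defined through a \emph{fixed} $(\rho,\lambda)$. The clean fix is to make the estimate parametrization-free: for any $u$ pick $s$ with $(\rho_n(s),\lambda_n(s))=(x_n(u),u)$; then $(\rho^{(n)}(s),\lambda^{(n)}(s))\in\Gamma_x$ with $|\lambda^{(n)}(s)-u|<\epsilon_n$, so $\rho^{(n)}(s)$ lies in the segment $[x(v-),x(v)]$ for some $v$ within $\epsilon_n$ of $u$. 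Continuity of $x$ at $t$ then controls $|\rho^{(n)}(s)-x(t)|$ uniformly over $|u-t|\le\delta$ once $\delta$ and $\epsilon_n$ are small, with no reference to $s^\pm$ or $\eta$. This removes the dependence on a particular parametrization of $x$ and closes the argument.
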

\noindent
Since the limit is càdlàg, its set of discontinuities is at most countable, and therefore $M_1$-convergence implies pointwise convergence outside a subset of null Lebesgue measure.

Relatively compact sets are also explicitly characterized for the $M_1$ topology.
\begin{proposition}[\citet{whitt2002stochastic}, Theorem~12.12.2]
    \label{proposition:M_1_compactness}
    A subset $A$ of $\mathbb{D}_T$ has compact closure for the $M_1$ topology if and only if the following two conditions hold:
    \begin{itemize}
        \item[$\bullet$] $\sup\limits_{x \in A} \, \|x\|_{\infty} < + \infty\,,$
        \item[$\bullet$] $\lim\limits_{\delta \to 0^+} \, \sup\limits_{x \in A}\, w(x,\delta) < + \infty\,$, where
        $$
w(x,\delta) := \sup_{0 \leq t \leq T}\, w'(x, t, \delta) \, \vee \, v(x,0,\delta) \,\vee \, v(x, T, \delta)\,, \quad \delta > 0\,, \quad x \in \mathbb{D}_T\,,
        $$
        with
        $$
        w'(x, t, \delta) := \sup_{0 \, \vee \, (t- \delta) \leq t_1 < t_2 < t_3 \leq (t+ \delta)\, \wedge \, T}\, d\left(x(t_2)\,, [x(t_1)\,,x(t_3)]\right)\,, \quad \delta > 0\,, \quad t \leq T\,, \quad x \in \mathbb{D}_T\,,
        $$
        and
        $$
        v(x,t,\delta) := \sup_{0 \, \vee \, (t-\delta)\leq t_1 \leq t_2 \leq (t+ \delta) \, \wedge \, T}\, |x(t_1) - x(t_2)|\,, \quad \delta > 0\,, \quad t \leq T \,, \quad x \in \mathbb{D}_T\,.
        $$
        \end{itemize}
    For $x \in \R$ and $I \subset \R\,$, $d(x, I)$ denotes the distance between $x$ and $I\,$,
    \begin{align}\label{eq:dM1}
    d(x, I) := \inf_{y \in I}\, |x - y|\,.    
    \end{align}
    \end{proposition}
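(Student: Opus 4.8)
The plan is to deduce this Arzelà–Ascoli-type criterion from the classical compactness theorem on $C([0,1],\R^2)$, transported through the parametric representation that underlies the definition of $d_{M_1}$. Since $(\mathbb{D}_T, M_1)$ is metrizable, $A$ has compact closure if and only if every sequence in $A$ admits an $M_1$-convergent subsequence. First I would encode each path by a parametrization of its thin graph: to $x \in \mathbb{D}_T$ associate a conveniently chosen (e.g.\ constant-speed) parametrization $\gamma_x = (\rho_x, \lambda_x) \in \Pi_x$, viewed as an element of $C([0,1], \R^2)$ with the uniform norm. The representation theorem for $d_{M_1}$ (cf.\ \citet[Theorem~12.5.1]{whitt2002stochastic}) says that $M_1$-convergence of $x_n$ to $x$ is equivalent to the existence of parametrizations in $\Pi_{x_n}$ converging uniformly to one in $\Pi_x$; conversely, a uniform limit of such parametrizations is again the parametrization of the thin graph of some càdlàg function (the limiting time component is non-decreasing onto $[0,T]$, and the limiting curve, read off in time, is càdlàg). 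This lets one pass between $M_1$-relative-compactness of $A$ and relative compactness, in $C([0,1],\R^2)$, of an associated family $\tilde A$ of parametrizations of the graphs $\{\Gamma_x : x \in A\}$.

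Next I would apply the classical Arzelà–Ascoli theorem to $\tilde A$: it is relatively compact for uniform convergence if and only if it is uniformly bounded and equicontinuous. Uniform boundedness of $\tilde A$ translates immediately into $\sup_{x \in A}\|x\|_\infty < +\infty$, since the image of $\gamma_x$ is exactly $\Gamma_x$, which is contained in $[-\|x\|_\infty, \|x\|_\infty] \times [0,T]$. It then remains to match equicontinuity of the parametrizations with the second stated condition, the one on the $M_1$ oscillation modulus $w$.

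The substantive step is precisely this last equivalence, and it is where the real work lies. In one direction, when $w'(x,t,\delta)$ is uniformly small over $A$, then on every window $[t_1,t_3] \subset [(t-\delta)\vee 0, (t+\delta)\wedge T]$ each intermediate value $x(t_2)$ lies within $w'$ of the chord $[x(t_1),x(t_3)]$, so the portion of $\Gamma_x$ above that window is squeezed into a thin tube around a monotone curve of small diameter in the space direction; combined with the boundary moduli $v(x,0,\delta)$ and $v(x,T,\delta)$, which control possible jumps located exactly at the endpoints of $[0,T]$ and to which $w'$ is blind, this lets one build parametrizations whose modulus of continuity is controlled uniformly over $A$, yielding equicontinuity. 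Conversely, equicontinuity of the $\gamma_x$ forces every short arc of $\Gamma_x$ to have small diameter over short time windows, which is exactly the bound on $w$. The hard part will be making this translation rigorous: the parametrizations are highly non-unique and not canonically continuous in $x$, so one must commit to one concrete family — constant speed being the natural choice — and check that the estimate relating the graph oscillation $w$ to the uniform modulus of continuity is two-sided and uniform over $A$, while carefully tracking the order on $\Gamma_x$ near jump times and the distinguished role played by the endpoints of $[0,T]$. This is the content of \citet[Theorem~12.12.2]{whitt2002stochastic}, whose proof we adopt.
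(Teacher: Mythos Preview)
The paper does not prove this proposition: it is stated in the appendix as a citation of \citet[Theorem~12.12.2]{whitt2002stochastic} with no accompanying argument. Your proposal likewise ultimately defers to Whitt's proof, so there is nothing to compare beyond noting that both the paper and your write-up treat this as an imported result; your sketch of the parametric-representation/Arzel\`a--Ascoli route is a faithful outline of how Whitt proceeds, but it is strictly more than what the paper itself supplies.
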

\begin{remark}
    \label{remark:M_1_cv_implies_bounded}
In particular, the previous proposition shows that a $M_1$-convergent sequence has uniformly bounded uniform norm.
\end{remark}
\begin{remark}
    \label{remark:example_not_cv_M_1}
    If we define the sequence $\left(f_n\right)_{n \geq 2} := \left(\1_{[1/2 - 1/n\,, 1/2 + 1/n)}\right)_{n \geq 2}$ of elements of $\mathbb{D}_1\,$, we remark that for any $\delta > 0\,$, there exists $N \geq 2$ such that for any $n \geq N\,$, $w'\left(f_n\,, 1/2\,, \delta\right) = 1\,$. Therefore, $\left\{f_n\,,\, n \geq 2\right\}$ is not relatively compact in $\left(\mathbb{D}_1\,, M_1\right)$ and the sequence cannot converge for this topology.
\end{remark}
\begin{remark}
    \label{remark:addition_not_continuous_M_1}
    In the previous example, 
    $$
f_n = 1 - \1_{[0\,,1/2 - 1/n)} - \1_{[1/2 + 1/n\,, 1]} \,, \quad n \geq 2\,.
    $$
    However, $\left(\1_{[0\,,1/2 - 1/n)}\right)_{n \geq 2}$ converges to $\1_{[0\,, 1/2)}$ and $\left(\1_{[1/2 + 1/n\,, 1]}\right)_{n\geq 2}$ to $\1_{[1/2\,,1]}$ in the $M_1$ topology (see \citet[Corollary~12.5.1]{whitt2002stochastic} for $M_1$-convergence of sequences of monotone functions), although we have shown that $\left(f_n\right)_{n \geq 2}$ does not converge to $0\,$. This examples shows that addition is not sequentially continuous for the $M_1$ topology.
\end{remark}

We can now state the necessary and sufficient conditions for $M_1$-tightness of a sequence of stochastic processes in $\mathbb{D}_T\,$, which is linked to the characterization of compacts given by Proposition \ref{proposition:M_1_compactness}.
\begin{proposition}[\citet{whitt2002stochastic}, Theorem~12.12.3]
    \label{proposition:M_1_tightness}
    A sequence $(X^n)_{n \geq 0}$ of processes in $\mathbb{D}_T$ endowed with the $M_1$ topology is tight if and only if the following two conditions hold:
    \begin{itemize}
        \item[$\bullet$] $\lim\limits_{R \to + \infty}\, \sup\limits_n \, \mathbb{P}\left(\|X^n\|_{\infty} > R\right) = 0\,,$ 
        \item[$\bullet$] for any $\eta > 0\,$, $\lim\limits_{\delta \to 0^+}\, \sup\limits_n \,\mathbb{P}\left(w(X^n\,,\delta) \geq \eta  \right) = 0\,,$
    \end{itemize}
    where the modulus $w$ is defined in Proposition \ref{proposition:M_1_compactness}.
\end{proposition}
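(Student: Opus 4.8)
The plan is to derive the characterization from Prokhorov's theorem combined with the explicit description of $M_1$-relatively compact subsets of $\mathbb{D}_T$ given in Proposition~\ref{proposition:M_1_compactness}. Since $(\mathbb{D}_T,M_1)$ is Polish, a sequence $(X^n)_{n\geq0}$ of $\mathbb{D}_T$-valued random variables is tight if and only if for every $\varepsilon>0$ there is an $M_1$-compact set $K_\varepsilon\subset\mathbb{D}_T$ with $\inf_n\mathbb{P}(X^n\in K_\varepsilon)>1-\varepsilon$. Proposition~\ref{proposition:M_1_compactness} says that $M_1$-relative compactness of a set is governed by exactly two functionals, the uniform norm $\|\cdot\|_\infty$ and the oscillation modulus $w(\cdot,\delta)$, and the two stated conditions are precisely the probabilistic analogues of ``$\|\cdot\|_\infty$ uniformly bounded'' and ``$w(\cdot,\delta)\to0$ uniformly''.

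For sufficiency, fix $\varepsilon>0$. First I would use the first condition to pick $R>0$ with $\sup_n\mathbb{P}(\|X^n\|_\infty>R)<\varepsilon/2$. Then, applying the second condition with $\eta=1/k$ for each integer $k\geq1$, I would pick $\delta_k>0$ such that $\sup_n\mathbb{P}(w(X^n,\delta_k)\geq1/k)<\varepsilon\,2^{-k-1}$. Set $A:=\{x\in\mathbb{D}_T:\ \|x\|_\infty\leq R\ \text{and}\ w(x,\delta_k)\leq1/k\ \text{for all }k\geq1\}$. Using that $\delta\mapsto w(x,\delta)$ is non-decreasing, for every $\delta\leq\delta_k$ one has $\sup_{x\in A}w(x,\delta)\leq1/k$, so $\sup_{x\in A}w(x,\delta)\to0$ as $\delta\to0^+$; together with the uniform bound $\|x\|_\infty\leq R$ on $A$, Proposition~\ref{proposition:M_1_compactness} shows that $A$ is $M_1$-relatively compact, hence $K_\varepsilon:=\overline{A}$ is $M_1$-compact. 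A union bound then gives $\sup_n\mathbb{P}(X^n\notin K_\varepsilon)\leq\sup_n\mathbb{P}(X^n\notin A)\leq\sup_n\mathbb{P}(\|X^n\|_\infty>R)+\sum_{k\geq1}\sup_n\mathbb{P}(w(X^n,\delta_k)>1/k)<\varepsilon$, which proves tightness.

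For necessity, fix $\varepsilon>0$ and take an $M_1$-compact $K_\varepsilon$ with $\inf_n\mathbb{P}(X^n\in K_\varepsilon)>1-\varepsilon$. Proposition~\ref{proposition:M_1_compactness} gives $R_\varepsilon:=\sup_{x\in K_\varepsilon}\|x\|_\infty<+\infty$ and $\sup_{x\in K_\varepsilon}w(x,\delta)\to0$ as $\delta\to0^+$. Consequently $\sup_n\mathbb{P}(\|X^n\|_\infty>R_\varepsilon)\leq\varepsilon$, and, given $\eta>0$, choosing $\delta_0$ with $\sup_{x\in K_\varepsilon}w(x,\delta_0)<\eta$ yields $\sup_n\mathbb{P}(w(X^n,\delta)\geq\eta)\leq\varepsilon$ for all $\delta\leq\delta_0$. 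Letting $\varepsilon\to0$ in these two inequalities produces exactly the two announced conditions.

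The one genuinely delicate point — the main obstacle — is measurability: one must check that $x\mapsto\|x\|_\infty$ and $x\mapsto w(x,\delta)$ are measurable with respect to the Borel $\sigma$-field of $(\mathbb{D}_T,M_1)$, so that the events above are legitimate and the set $A$ (hence $K_\varepsilon=\overline A$) is Borel, and that these functionals are compatible with taking $M_1$-closures. This is handled by writing $\|\cdot\|_\infty$ and $w(\cdot,\delta)$ as countable suprema and infima of $M_1$-continuous (or at worst measurable) functionals, restricting all time variables to a countable dense set of $[0\,,T]$; it is part of the analysis underlying the compactness criterion, and we refer to \citet[Theorems~12.12.2 and~12.12.3]{whitt2002stochastic} for the details.
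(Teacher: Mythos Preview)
The paper does not prove this proposition at all: it is stated as a citation of \citet[Theorem~12.12.3]{whitt2002stochastic} and used as a black box. Your argument is the standard derivation of the tightness criterion from Prokhorov's theorem and the compactness characterization of Proposition~\ref{proposition:M_1_compactness}, and it is correct; this is essentially the route taken in Whitt's book as well, so there is nothing to compare against within the paper itself.
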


We conclude this section with a property on the Borel $\sigma$-field generated by the $M_1$ topology. We first need to define the usual $\sigma$-field on $\mathbb{D}_T\,$.
\begin{definition}
\label{def:sigma_field}
    Define the real-valued functions on $\mathbb{D}_T\,$, called evaluation functions, as
    $$
    \pi_t : x \longmapsto x(t) \,, \quad t \leq T\,.
    $$
    Then, we denote by $\mathcal{F}_{\mathbb{D}_T}$ the $\sigma$-field they generate:
    $$
    \mathcal{F}_{\mathbb{D}_T} := \sigma \left(\pi_t\,,\,\, t \leq T\right)\,.
    $$
\end{definition}
\begin{remark}
\label{remark:sigma_field}
    $\mathcal{F}_{\mathbb{D}_T}$ is the usual $\sigma$-field for càdlàg stochastic process. For a family of real-valued functions $X := \left(X_t\right)_{t \leq T}$ defined on a measurable space $\left( \Omega\,, \mathcal{F}\right)\,$, one can check that the following properties are equivalent:
    \begin{itemize}
        \item[$\bullet$] $X_t : \Omega \to \R$ is $\left(\mathcal{F}\,, \mathcal{B}_{\R}\right)$-measurable, for all $0\leq t \leq T\,$.
        \item[$\bullet$] $X : \Omega \to \mathbb{D}_T$ is $\left( \mathcal{F}\,, \mathcal{F}_{\mathbb{D}_T}\right)$-measurable.
    \end{itemize}
\end{remark}
\noindent
The announced result on the $M_1$ topology is as follows.
\begin{proposition}[\citet{whitt2002stochastic}, Theorem~11.5.2]
    \label{proposition:M_1_borel_field}
    The Borel $\sigma$-field generated by the $M_1$ topology coincides with $\mathcal{F}_{\mathbb{D}_T}$ generated by the evaluations.
\end{proposition}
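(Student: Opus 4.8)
The plan is to establish the two inclusions $\mathcal{F}_{\mathbb{D}_T} \subseteq \mathcal{B}(M_1)$ and $\mathcal{B}(M_1) \subseteq \mathcal{F}_{\mathbb{D}_T}$, where $\mathcal{B}(M_1)$ denotes the Borel $\sigma$-field of $(\mathbb{D}_T\,, M_1)\,$.

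For the first inclusion it is enough to show that every evaluation map $\pi_t$ is $\mathcal{B}(M_1)$-measurable. The map $\pi_T$ is even $M_1$-continuous by Proposition \ref{proposition:M_1_local_uniform}. For $t < T$ and $k$ large enough that $t + 2^{-k} \leq T\,$, I would introduce the averaging functional $A_k : x \mapsto 2^k \int_t^{t + 2^{-k}} x(s)\,ds\,$. If $x_n \to x$ in the $M_1$ topology, then $x_n(s) \to x(s)$ for all $s \notin Disc(x)$ by Proposition \ref{proposition:M_1_local_uniform}, hence for Lebesgue-almost every $s$ since $Disc(x)$ is countable, while $\sup_n \|x_n\|_{\infty} < + \infty$ by Remark \ref{remark:M_1_cv_implies_bounded}; the dominated convergence theorem then gives $A_k(x_n) \to A_k(x)\,$. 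Thus $A_k$ is sequentially $M_1$-continuous, hence $M_1$-continuous since $M_1$ is metrizable, hence $\mathcal{B}(M_1)$-measurable. Right-continuity of càdlàg paths yields $A_k(x) \to x(t) = \pi_t(x)$ as $k \to \infty$ for every $x \in \mathbb{D}_T\,$, so $\pi_t$ is a pointwise limit of $\mathcal{B}(M_1)$-measurable functions and therefore itself $\mathcal{B}(M_1)$-measurable. Hence $\mathcal{F}_{\mathbb{D}_T} = \sigma(\pi_t : t \leq T) \subseteq \mathcal{B}(M_1)\,$.

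For the reverse inclusion I would upgrade this to an equality by a Blackwell-type argument. First, $\mathcal{F}_{\mathbb{D}_T}$ is countably generated: by right-continuity one has $\pi_t = \lim_{q \downarrow t,\, q \in \mathbb{Q}} \pi_q$ pointwise for $t < T\,$, so $\mathcal{F}_{\mathbb{D}_T} = \sigma\big(\pi_q : q \in (\mathbb{Q} \cap [0\,,T]) \cup \{T\}\big)\,$. Second, this countable family separates points of $\mathbb{D}_T$: if $x \neq y$ there is $t$ with $x(t) \neq y(t)\,$, and either $t = T$ or, for every rational $q > t$ close enough to $t\,$, $x(q) \neq y(q)$ by right-continuity. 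Since $(\mathbb{D}_T\,, M_1)$ is Polish --- a fact already used in the proof of Theorem \ref{theorem:weak_convergence} --- the measurable space $(\mathbb{D}_T\,, \mathcal{B}(M_1))$ is standard Borel, and Blackwell's theorem then forces every countably generated sub-$\sigma$-field that separates points to coincide with $\mathcal{B}(M_1)\,$. Applied to $\mathcal{F}_{\mathbb{D}_T}\,$, which the first step showed is contained in $\mathcal{B}(M_1)\,$, this gives $\mathcal{F}_{\mathbb{D}_T} = \mathcal{B}(M_1)\,$.

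The only non-elementary ingredient along this route is Blackwell's theorem; everything else reduces to the routine first inclusion. If one prefers a self-contained proof of $\mathcal{B}(M_1) \subseteq \mathcal{F}_{\mathbb{D}_T}\,$, separability of $(\mathbb{D}_T\,, M_1)$ reduces the claim to showing that $x \mapsto d_{M_1}(x, y)$ is $\mathcal{F}_{\mathbb{D}_T}$-measurable for each fixed $y\,$; one can then write this as the pointwise limit of $x \mapsto d_{M_1}(\Phi_k(x)\,, y)\,$, where $\Phi_k(x)$ is the càdlàg step function interpolating $x$ at the dyadic times $jT/2^k\,$, using that $z \mapsto \mathrm{step}_z$ is uniformly (hence $M_1$-) continuous on $\mathbb{R}^{2^k+1}$ and that $\Phi_k(x) \to x$ in $M_1$ for every càdlàg $x\,$. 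I expect the verification of this last convergence of the dyadic approximations --- standard but slightly delicate because of jumps located at or between grid points --- to be the main technical obstacle along this alternative path.
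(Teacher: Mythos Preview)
The paper does not prove this proposition at all: it is simply quoted from \citet[Theorem~11.5.2]{whitt2002stochastic} and used as a black box, so there is no ``paper's own proof'' to compare against.

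Your argument is correct. The first inclusion is clean: the averaging functionals $A_k$ are $M_1$-continuous by exactly the dominated-convergence reasoning you give (a.e.\ pointwise convergence from Proposition~\ref{proposition:M_1_local_uniform} plus the uniform bound from Remark~\ref{remark:M_1_cv_implies_bounded}), and right-continuity gives $\pi_t = \lim_k A_k$. For the reverse inclusion, your Blackwell/Lusin--Souslin route is valid: $(\mathbb{D}_T, M_1)$ is Polish, $\mathcal{F}_{\mathbb{D}_T}$ is a countably generated sub-$\sigma$-field of $\mathcal{B}(M_1)$ that separates points, and in a standard Borel space any such sub-$\sigma$-field coincides with the full Borel $\sigma$-field. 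This is a perfectly legitimate and efficient way to close the argument; the only cost is importing a descriptive-set-theoretic result rather than staying elementary. Your honesty about the alternative ``self-contained'' route is well placed: the convergence $\Phi_k(x) \to x$ in $M_1$ for arbitrary c\`adl\`ag $x$ is indeed the delicate point there (one must control what happens when several jumps of $x$ fall in the same dyadic cell), and the Blackwell shortcut avoids it entirely.
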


\subsection{The $S$ Topology}
\citet{jakubowski1997non} introduced a non-Skorokhod topology on $\mathbb{D}_T$ named the $S$ topology.
We first define the convergence $\longrightarrow_S\,$. We say that $x_n \longrightarrow_S x$ in $\mathbb{D}_T$ if for any $\varepsilon > 0\,$, there exists functions $\left(v_{n, \varepsilon}\right)_n$ and $v_{\varepsilon}$ of bounded variation such that:
\begin{itemize}
    \item[$\bullet$] $\|x_n - v_{n, \varepsilon}\|_{\infty} \leq \varepsilon$ for any $n\,$, and $\|x - v_{\varepsilon}\|_{\infty} \leq \varepsilon\,$.
    \item[$\bullet$] $(dv_{n, \varepsilon})_n$ converges weakly to $dv_{\varepsilon}\,$, i.e. for any continuous function $f$ on $[0\,, T]\,$,
    $$
    \int_0^T \, f(t)\, dv_{n, \varepsilon}(t) \overset{n \to \infty}{\longrightarrow} \int_0^T \, f(t) \, dv_{\varepsilon}(t)\,.
    $$
\end{itemize}
From this sequential convergence, one constructs a topology named $S$ on $\mathbb{D}_T\,$. Note that the convergence in the $S$ topology, denoted by $\overset{S}{\longrightarrow}\,$, is generally weaker than the initial convergence $\longrightarrow_S\,$. In fact, we have:

\begin{center}
    \textit{$x_n \overset{S}{\longrightarrow} x$ if and only if, in any subsequence $\left(x_{n_k}\right)_k\,$, there exists a further subsequence $\left(x_{n_{k_l}}\right)_l$ such that $x_{n_{k_l}} \longrightarrow_S x\,$.}
\end{center}
In particular, continuity of real-valued functions for the $S$ topology coincides with sequential continuity for $\longrightarrow_S\,$. Additionally, this gives the following link to pointwise convergence.
\begin{proposition}[\citet{jakubowski1997non}, Corollary~2.9]
    \label{propostion:S_pointwise_cv}
    If $x_n \overset{S}{\longrightarrow} x\,$, then for each subsequence $\left(x_{n_k}\right)_k\,$, there exists a further subsequence $\left(x_{n_{k_l}}\right)_l$ and a countable set $Q \subset [0\,,T)$ such that 
    $$
    x_{n_{k_l}}(t) \longrightarrow x(t)\,, \quad t \in [0\,,T] \setminus Q\,.
    $$
\end{proposition}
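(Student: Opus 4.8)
The plan is to first strip away the topological subtlety and then reduce the whole statement to a Helly-type fact about functions of bounded variation. Since $\overset{S}{\longrightarrow}$ is only defined through the subsequence characterization recalled above, I would start by fixing an arbitrary subsequence $(x_{n_k})_k$ and extracting from it a further subsequence — relabelled $(x_m)_{m\geq 0}$ — along which the \emph{strong} convergence $x_m \longrightarrow_S x$ holds. For each $j \geq 1$ one then invokes the definition of $\longrightarrow_S$ with $\varepsilon = 1/j$ to produce bounded-variation functions $(v^j_m)_m$ and $v^j$ with $\|x_m - v^j_m\|_\infty \leq 1/j$ for all $m$, $\|x - v^j\|_\infty \leq 1/j$, and $dv^j_m \rightharpoonup dv^j$ weakly against $C([0\,,T])$. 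Banach--Steinhaus applied to the dual of $C([0\,,T])$ gives $C_j := \sup_m \mathrm{Var}_{[0,T]}(v^j_m) < \infty$, and together with the fact that an $S$-convergent sequence has uniformly bounded uniform norm (a standard property of the $S$ topology), this makes $(v^j_m)_m$ uniformly bounded and of uniformly bounded variation.

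The heart of the argument is the claim that, for each fixed $j$, one has $v^j_m(t) \to v^j(t)$ for every $t$ outside an at-most-countable set $Q_j \subset [0\,,T)$. I would prove this by combining Helly's selection theorem with the weak convergence: any subsequence of $(v^j_m)_m$ has a further subsequence converging pointwise on $[0\,,T]$ to some bounded-variation $w$, and passing to the limit in $\int f\,dv^j_m \to \int f\,dv^j$ (using the uniform variation bound) shows that $w$ and $v^j$ induce the same finite signed measure, hence agree off their common, countable set of discontinuities. Normalizing $v^j_m(t) = (dv^j_m)([0\,,t])$ and testing $dv^j_m$ against continuous functions $g^{\pm}_{t,\delta}$ that approximate $\1_{[0,t]}$ up to an error supported in $(t-\delta\,,t+\delta)$, the only obstruction to $v^j_m(t)\to v^j(t)$ is $\limsup_m |dv^j_m|\big((t-\delta\,,t+\delta)\big)$; the uniform bound $C_j$ together with the weak convergence forces this to vanish as $\delta\to0$ for all $t$ outside a countable $Q_j$, since otherwise $v^j_m$ would carry $\eta$-sized ``blips'' accumulating at uncountably many points, contradicting $\sup_m \mathrm{Var}(v^j_m)<\infty$. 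The endpoint $T$ has no ``right tail'' so $T\notin Q_j$, whereas nothing pins down the value at $0$, which is why $Q_j\subset[0\,,T)$.

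Finally one sets $Q := \bigcup_{j\geq1} Q_j$, countable and contained in $[0\,,T)$, and for $t\in[0\,,T]\setminus Q$ uses the triangle inequality
\[
|x_m(t)-x(t)| \;\leq\; |x_m(t)-v^j_m(t)| + |v^j_m(t)-v^j(t)| + |v^j(t)-x(t)| \;\leq\; \tfrac2j + |v^j_m(t)-v^j(t)|\,,
\]
so that $\limsup_m |x_m(t)-x(t)| \leq 2/j$ for every $j$ and hence $x_m(t)\to x(t)$; this is exactly the conclusion along the chosen sub-subsequence. The main obstacle is the ``heart'' step above: because the approximants $dv^j_m$ are \emph{signed}, neither the Portmanteau theorem nor monotonicity of distribution functions is available, so the location of the total variation must be controlled directly through $C_j$ and the weak convergence — and this is precisely where countability of the exceptional set (rather than mere Lebesgue-negligibility) is genuinely used. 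A secondary subtlety, easy to overlook, is that the $S$-topology is non-metric and only sequentially defined, so the initial passage to $\longrightarrow_S$ is unavoidable and cannot be replaced by a modulus-of-continuity estimate.
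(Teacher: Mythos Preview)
The paper does not prove this proposition --- it is quoted from \citet{jakubowski1997non}, Corollary~2.9, without argument --- so there is no ``paper's proof'' to compare against. Your overall strategy (pass to $\longrightarrow_S$, invoke the bounded-variation approximants $v^j_m$, and conclude by the $2/j$ triangle inequality with $Q=\bigcup_j Q_j$) is the natural one and matches Jakubowski's.

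That said, your ``heart'' step is false as written. You claim that for the \emph{full} sequence $(v^j_m)_m$ one has $v^j_m(t)\to v^j(t)$ off a countable set, but weak convergence of signed measures with uniformly bounded total variation does \emph{not} force pointwise convergence of the primitives off a countable set. Counterexample: enumerate, stage by stage, the $2^k$ closed intervals of length $3^{-k}$ that cover the Cantor set $C$ at stage $k$, and let $v_m=\1_{[a_m,b_m)}$ be their indicators. Then $dv_m=\delta_{a_m}-\delta_{b_m}\rightharpoonup 0$ against $C([0,T])$ (since $b_m-a_m\to 0$) and $\sup_m\mathrm{Var}(v_m)=2$, yet every $t\in C$ lies in one interval per stage, so $v_m(t)=1$ infinitely often and $v_m(t)\not\to 0$ on the uncountable set $C$. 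Your ``$\eta$-blips'' heuristic breaks exactly here: at each $t\in C$ the obstruction $\lim_{\delta\to 0}\limsup_m|dv_m|((t-\delta,t+\delta))$ equals $2$, but the mass is carried by \emph{different} indices $m$ at different points $t$, so no single $m$ ever violates the variation bound and your contradiction does not materialise. Likewise the Helly-plus-uniqueness argument fails because different subsequential Helly limits $w$ can have different (countable) discontinuity sets, and you have no a priori control over their union across all subsequences.

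The repair is simply not to aim for full-sequence convergence: the proposition only asks for a \emph{further} subsequence, so after passing to $\longrightarrow_S$ you are still allowed one more extraction. Decompose each $v^j_m$ as a difference of two non-decreasing, uniformly bounded functions, apply Helly's selection theorem and diagonalise over $j$ to get a single subsequence $(m_l)_l$ along which $v^j_{m_l}\to w^j$ pointwise off a countable $Q_j$, for every $j$. Your final triangle-inequality step then works verbatim with $Q=\bigcup_j Q_j$; the only additional bookkeeping is to check that $w^j$ stays within $O(1/j)$ of $x$, which follows from $dw^j=dv^j$ together with the value at the endpoint $T$ (test the weak convergence against $f\equiv 1$).
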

We next list some useful properties of the $S$ topology.
\begin{proposition}[\citet{jakubowski1997non}, Theorem~2.13, Lemma~2.8, Corollary~2.11]
    \label{proposition:S_properties}
\hspace{1mm}
\begin{itemize}
    \item[$\bullet$] The $S$ topology is weaker than the $M_1$ topology.
    \item[$\bullet$] $(\mathbb{D}_T\,, S)$ is not metrizable, and therefore not Polish.
    \item[$\bullet$] There exists a countable family of $S$-continuous functions $(f_n)_{n\geq 0}$ separating points in $\mathbb{D}_T\,$, i.e. for any $x\,, y$ in $\mathbb{D}_T\,$:
    $$
    f_n(x) = f_n(y)\,, \,  \, n \geq 0 \implies x = y\,.
    $$
    \item[$\bullet$] The Borel $\sigma$-algebra generated by the $S$ topology coincides with $\mathcal{F}_{\mathbb{D}_T}$ generated by the evaluations (see Definition \ref{def:sigma_field}).
    \item[$\bullet$] Addition is sequentially continuous for the $S$ topology. 
    \item[$\bullet$] If $x_n \overset{S}{\longrightarrow} x\,$, then $\sup\limits_n \, \|x_n\|_{\infty} < + \infty\,.$ 
    \item[$\bullet$] Let $\Phi : [0\,,T] \times \R \to \R$ measurable such that for all $t\,$, $\Phi(t\,, \cdot)$ is continuous. If for all $C > 0\,$, 
    $$\sup_{0\leq t \leq T}\, \sup_{|x| \leq C} \, |\Phi(t,x)| < +\infty\,,$$
    then the function $f : (\mathbb{D}_T\,, S) \to \R$ defined by 
    $$
    f(x) = \int_0^T\, \Phi(t\,,x(t))\,dt \,, \quad x \in \mathbb{D}_T\,,
    $$
    is continuous.
\end{itemize}
\end{proposition}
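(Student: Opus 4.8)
The plan is to reduce every item to the single sequential notion $\longrightarrow_S$ together with the sub-subsequence principle linking $\longrightarrow_S$ to $\overset{S}{\longrightarrow}$, using as the main analytic tool the fact that a sequence of finite signed measures on $[0\,,T]$ converging weakly against $C([0\,,T])$ is automatically bounded in total variation (Banach--Steinhaus in $C([0\,,T])^{*}$). I would first handle the ``analytic'' items (uniform boundedness of $S$-convergent sequences, sequential continuity of addition, continuity of the integral functional), and then the ``topological'' ones ($S$ weaker than $M_1$, the countable separating family of continuous functions, the coincidence of Borel $\sigma$-fields, non-metrizability and non-Polishness).

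\textbf{Analytic items.} For $\sup_n\|x_n\|_\infty<+\infty$ when $x_n\overset{S}{\longrightarrow}x$: by the sub-subsequence principle and a contradiction argument it suffices to bound $\sup_l\|v_{n_{k_l},1}\|_\infty$ along a subsequence with $x_{n_{k_l}}\longrightarrow_S x$, where $v_{n_{k_l},1}$ is of bounded variation, $\|x_{n_{k_l}}-v_{n_{k_l},1}\|_\infty\le1$ and $dv_{n_{k_l},1}$ converges weakly; the total-variation bound (after a standard normalisation of the approximants at the left endpoint) delivers this. Sequential continuity of addition is then immediate: if $x_n\longrightarrow_S x$ and $y_n\longrightarrow_S y$, the function $v_{n,\varepsilon/2}+w_{n,\varepsilon/2}$ is of bounded variation, lies within $\varepsilon$ of $x_n+y_n$, and its differential converges weakly by linearity, so $x_n+y_n\longrightarrow_S x+y$; a nested extraction upgrades this to $\overset{S}{\longrightarrow}$. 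For $f(x)=\int_0^T\Phi(t\,,x(t))\,dt$: given $x_n\overset{S}{\longrightarrow}x$, every subsequence of $(x_n)$ still converges to $x$, hence by the pointwise-convergence result (\citet{jakubowski1997non}, Corollary~2.9) admits a further subsequence converging to $x(t)$ for all $t$ off a countable set; continuity of $\Phi(t\,,\cdot)$ gives $\Phi(t\,,x_{n_{k_l}}(t))\to\Phi(t\,,x(t))$ for a.e.\ $t$, and with $C:=\sup_n\|x_n\|_\infty<+\infty$ the constant $\sup_{s\le T,\,|y|\le C}|\Phi(s\,,y)|$ dominates all the integrands, so dominated convergence yields $f(x_{n_{k_l}})\to f(x)$; therefore $f(x_n)\to f(x)$, and $S$-continuity follows because $S$ is a sequential topology.

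\textbf{Topological and measure-theoretic items.} To prove $S$ is weaker than $M_1$ I would show $M_1$-convergence implies $\longrightarrow_S$: from converging parametrisations $(\rho_n,\lambda_n)\to(\rho,\lambda)$ of the thin graphs of the $x_n$ one constructs, for each $\varepsilon$, uniformly $\varepsilon$-close approximants that are of bounded variation (the monotonicity built into the parametrisation is what makes them so) and whose differentials converge weakly. For the separating family, take a countable $\|\cdot\|_\infty$-dense set $(f_k)_{k\ge0}\subset C([0\,,T])$ and the functionals $x\mapsto\int_0^T f_k(t)\,x(t)\,dt$ together with the endpoint evaluation $\pi_T:x\mapsto x(T)$; each is $\longrightarrow_S$-continuous (integration by parts and weak convergence of the bounded-variation differentials for the integrals, and the fact that $S$-convergence forces convergence at $T$ for $\pi_T$), and they separate points since two càdlàg functions equal Lebesgue-a.e.\ and at $T$ agree by right-continuity. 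The identity of Borel $\sigma$-fields is then obtained by sandwiching: the above $S$-continuous functionals are $\mathcal{F}_{\mathbb{D}_T}$-measurable (they are measurable limits of linear combinations of evaluations), so $\mathcal{F}_{\mathbb{D}_T}\subseteq\mathcal{B}(S)$, the Borel $\sigma$-field of $S$, while $\mathcal{B}(S)\subseteq\mathcal{F}_{\mathbb{D}_T}$ because $S$ carries a weaker separable metrizable topology generated by that countable family, whose Borel field is $\mathcal{F}_{\mathbb{D}_T}$, a Souslin/Lusin-space argument identifying the two. Finally, non-metrizability (hence non-Polishness) holds because sequential $S$-continuity is strictly weaker than genuine $S$-continuity --- addition being the standard witness: it is sequentially $S$-continuous, as just shown, but not $S$-continuous --- which is impossible in a metrizable space; equivalently, $(\mathbb{D}_T,S)$ is not first countable.

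\textbf{Expected obstacle.} The genuinely topological claims --- non-metrizability and the precise identification $\mathcal{B}(S)=\mathcal{F}_{\mathbb{D}_T}$ --- are the hard part, since they do not reduce to the sequential convergence $\longrightarrow_S$ and require either an explicit ``bad'' open set or a structural (Lusin-space) input. The comparison $M_1\Rightarrow S$ comes next in difficulty, as it requires turning $M_1$ graph-parametrisations into weakly convergent bounded-variation differentials. The remaining items drop out fairly mechanically from the sub-subsequence principle and the total-variation boundedness of weakly convergent measures. All of this is due to \citet{jakubowski1997non}.
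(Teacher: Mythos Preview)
The paper does not prove this proposition: it is stated in the appendix as a collection of results cited from \citet{jakubowski1997non} (Theorem~2.13, Lemma~2.8, Corollary~2.11), with no argument given. There is therefore no ``paper's proof'' to compare against, and your sketch has to stand on its own.

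The analytic items --- uniform boundedness of $S$-convergent sequences via Banach--Steinhaus on the approximating measures, sequential continuity of addition by summing the bounded-variation approximants, and continuity of the integral functional via the sub-subsequence pointwise convergence plus dominated convergence --- are all correct and essentially reproduce Jakubowski's reasoning.

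Two genuine problems remain in the topological part. First, the $\sigma$-field sandwich has its logic scrambled: observing that your $S$-continuous functionals are $\mathcal{F}_{\mathbb{D}_T}$-measurable yields $\sigma(\text{functionals})\subseteq\mathcal{F}_{\mathbb{D}_T}$, which is the wrong inclusion. What you actually need for $\mathcal{F}_{\mathbb{D}_T}\subseteq\mathcal{B}(S)$ is that these functionals \emph{generate} $\mathcal{F}_{\mathbb{D}_T}$ --- and they do, since $\pi_t=\lim_{h\downarrow 0}h^{-1}\int_t^{t+h}x(s)\,ds$ for $t<T$ by right-continuity, so every evaluation is a pointwise limit of your $S$-continuous integrals --- combined with their $\mathcal{B}(S)$-measurability. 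For the reverse inclusion the Lusin-space detour is unnecessary: once you have shown $S$ is coarser than $M_1$, every $S$-open set is $M_1$-open and $\mathcal{B}(S)\subseteq\mathcal{B}(M_1)=\mathcal{F}_{\mathbb{D}_T}$ follows immediately. Second, your non-metrizability argument rests on the assertion that addition is \emph{not} $S$-continuous, but you have only established that it \emph{is} sequentially continuous; the failure of genuine continuity is precisely the ``explicit bad open set'' you yourself flagged as the hard step, and it is not supplied. Without that, or an independent witness to the failure of first-countability, the non-metrizability claim is incomplete.
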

\begin{remark}
    \label{remark:S_cv_pointwise_at_T}
    If $x_n \overset{S}{\longrightarrow} x\,$, Proposition \ref{propostion:S_pointwise_cv} combined with the uniform boundedness of $\left(\|x_n\|_{\infty}\right)_n$ given by Proposition \ref{proposition:S_properties} shows that we always have $x_n(T) \to x(T)$ without extraction.
\end{remark}
In order to compare the $M_1$ and the $S$ topology, one can keep in mind the sequence of functions $\left(f_n\right)_{n \geq 2} := \left(\1_{[1/2 - 1/n\,, 1/2 + 1/n)}\right)_{n \geq 2}$ from Remarks \ref{remark:example_not_cv_M_1} and \ref{remark:addition_not_continuous_M_1} that does not converge in the $M_1$ topology. For any $n \geq 2\,$, the function $f_n$ has bounded variation and it is clear that we have the weak convergence of measures:
$$
df_n = \delta_{1/2 - 1/n} - \delta_{1/2 + 1/n} \overset{n \to \infty}{\longrightarrow} 0\,,
$$
so that $f_n \overset{S}{\longrightarrow} 0\,$. This limit can also be understood in light of Remark \ref{remark:addition_not_continuous_M_1}, since $M_1$ is coarser than $S$ and addition is sequentially continuous for the latter topology.

Regarding the tightness of a sequence of càdlàg processes $(X^n)_{n\geq 0}\,$, we have the following simple criterion.
\begin{proposition}[\citet{jakubowski1997non}, Proposition~3.1]
    \label{proposition:S_tightness}
    A sequence of càdlàg processes $(X^n)_{n\geq 0}$ is $S$-tight if and only if the following two conditions hold:
    \begin{itemize}
        \item[$\bullet$] The sequence $(\left\|X^n\right\|_{\infty})_{n \geq 0}$ is tight.
        \item[$\bullet$] For any $a < b\,$, the sequence $\left(N^{a,b}(X^n)\right)_{n \geq 0}$ is tight, where $N^{a,b}(x)$ is the number of up-crossings of $x$ at levels $a<b\,$. It is an integer, given by: $N^{a,b}(x) \geq k$ if one can find $0\leq t_1 < t_2 < ... < t_{2k} \leq T$ such that $x(t_{2i - 1}) < a$ and $x(t_{2i}) > b$ for $i = 1\,,...\,,k\,$.
    \end{itemize}
\end{proposition}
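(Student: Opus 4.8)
The plan is to deduce the probabilistic equivalence from a purely deterministic description of the relatively $S$-compact subsets of $\mathbb{D}_T$, namely that a set $K\subseteq\mathbb{D}_T$ has $S$-compact closure if and only if
\[
\sup_{x\in K}\|x\|_{\infty}<+\infty \quad\text{and}\quad \sup_{x\in K}N^{a,b}(x)<+\infty \ \text{ for every } a<b.
\]
Granting this criterion, the proposition follows by the usual tightness recipe. For the ``if'' direction, fix $\varepsilon>0$, choose $R$ with $\sup_n\mathbb{P}(\|X^n\|_{\infty}>R)<\varepsilon/2$, enumerate the pairs of rationals $(a_k,b_k)_{k\geq 1}$ and choose integers $N_k$ with $\sup_n\mathbb{P}(N^{a_k,b_k}(X^n)>N_k)<\varepsilon\,2^{-k-1}$; then $K_\varepsilon:=\{x:\ \|x\|_{\infty}\le R,\ N^{a_k,b_k}(x)\le N_k\ \forall k\}$ has $S$-compact closure — it meets the criterion because, for arbitrary reals $a<b$, squeezing rationals $a<a'<b'<b$ and using the monotonicity $N^{a,b}(x)\le N^{a',b'}(x)$ (valid since $[a',b']\subseteq[a,b]$ forces every up-crossing at levels $a<b$ to be one at levels $a'<b'$) bounds $N^{a,b}$ on $K_\varepsilon$ — and $\sup_n\mathbb{P}(X^n\notin K_\varepsilon)<\varepsilon$. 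The ``only if'' direction is immediate: an $S$-compact $K_\varepsilon$ with $\sup_n\mathbb{P}(X^n\notin K_\varepsilon)<\varepsilon$ has, by the criterion, finite $R_\varepsilon:=\sup_{K_\varepsilon}\|\cdot\|_{\infty}$ and $M_\varepsilon:=\sup_{K_\varepsilon}N^{a,b}(\cdot)$, so $\sup_n\mathbb{P}(\|X^n\|_{\infty}>R_\varepsilon)\le\varepsilon$ and $\sup_n\mathbb{P}(N^{a,b}(X^n)>M_\varepsilon)\le\varepsilon$. All events above are measurable since $x\mapsto\|x\|_{\infty}$ and $x\mapsto N^{a,b}(x)$ are $\mathcal{F}_{\mathbb{D}_T}$-measurable (both are computed as suprema over rational times using right-continuity), see Definition~\ref{def:sigma_field}.

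It remains to establish the compactness criterion, and here the necessity is the easy part. That $x_n\overset{S}{\longrightarrow}x$ implies $\sup_n\|x_n\|_{\infty}<+\infty$ is one of the items of Proposition~\ref{proposition:S_properties}. For the up-crossings, by the subsequence characterization of $\overset{S}{\longrightarrow}$ it is enough to argue along a $\longrightarrow_S$-convergent (sub)sequence: fixing $\varepsilon>0$ one has bounded-variation $v_{n,\varepsilon},v_{\varepsilon}$ with $\|x_n-v_{n,\varepsilon}\|_{\infty}\le\varepsilon$ and $dv_{n,\varepsilon}$ converging weakly to $dv_{\varepsilon}$; weak convergence of finite signed measures forces $\sup_n\mathrm{Var}(v_{n,\varepsilon})<+\infty$ by the uniform boundedness principle, and since each up-crossing of $x_n$ at levels $a<b$ with $b-a>2\varepsilon$ makes $v_{n,\varepsilon}$ move by at least $b-a-2\varepsilon$, we get $N^{a,b}(x_n)\le\mathrm{Var}(v_{n,\varepsilon})/(b-a-2\varepsilon)$, uniformly in $n$; letting $\varepsilon\downarrow 0$ covers all $a<b$.

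The sufficiency of the criterion is the main obstacle. Given $\|x\|_{\infty}\le R$ and $N^{a,b}(x)\le M$ for all $a<b$, I would construct, for each $\delta>0$, a bounded-variation ``level staircase'' $v^{\delta}$ recording in which band $[-R+(j-1)\delta,\,-R+j\delta]$ the value $x(t)$ currently lies: then $\|x-v^{\delta}\|_{\infty}\le\delta$, while the number of band transitions of $v^{\delta}$ is controlled by the total number of up- and down-crossings of $x$ across the $\lceil 2R/\delta\rceil$ band interfaces, so $\mathrm{Var}(v^{\delta})\le C(R,M)$ \emph{independently of} $x\in K$ — it is genuinely the up-crossing hypothesis, not bounded variation of the functions themselves, that is being used here. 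Now take $(x_n)_{n\geq 0}\subseteq K$, apply the construction with $\delta=1/m$ to obtain $v^{(m)}_n$ with $\|x_n-v^{(m)}_n\|_{\infty}\le 1/m$ and $\sup_n\mathrm{Var}(v^{(m)}_n)\le C_m$, and extract — by Helly's selection theorem and a diagonal argument over $m$ — a subsequence along which $v^{(m)}_n\to v^{(m)}_{\infty}$ pointwise on $[0,T]$ for every $m$, with $v^{(m)}_{\infty}$ of finite variation; uniform boundedness of the variations then upgrades this, via the Helly--Bray theorem, to $dv^{(m)}_n$ converging weakly to $dv^{(m)}_{\infty}$. The estimate $\|v^{(m)}_n-v^{(m')}_n\|_{\infty}\le 1/m+1/m'$ passes to the pointwise limits, so $(v^{(m)}_{\infty})_m$ is sup-norm Cauchy and converges uniformly to some $x\in\mathbb{D}_T$ with $\|x-v^{(m)}_{\infty}\|_{\infty}\le 1/m$; taking $v_{n,\varepsilon}:=v^{(m)}_n$, $v_{\varepsilon}:=v^{(m)}_{\infty}$ for $\varepsilon=1/m$ then verifies $x_n\longrightarrow_S x$ along the diagonal. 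This gives sequential $\longrightarrow_S$-relative compactness of $K$; promoting it to genuine $S$-compactness of $\overline{K}$ (and, conversely, using that $S$-compact sets are sequentially compact in the necessity step) relies on the non-metrizable machinery already present — the countable family of $S$-continuous functions separating points from Proposition~\ref{proposition:S_properties} and \citet{jakubowski1997non}'s Prokhorov-type theorem for such spaces. The two points to watch are therefore the uniform-in-$K$ variation bound for the staircases and this bridge from sequential to topological compactness.
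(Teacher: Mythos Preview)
The paper does not supply its own proof of this proposition: it is stated as a citation of \citet[Proposition~3.1]{jakubowski1997non} and used as a black box (only the corollary for supermartingales, Proposition~\ref{proposition:S_tightness_supermartingale}, is actually invoked in the body of the paper). There is therefore nothing to compare your argument against in the paper itself.

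That said, your sketch is essentially the route taken in Jakubowski's original paper: reduce to the deterministic relative-compactness criterion in terms of a uniform sup-norm bound together with uniform up-crossing bounds, and prove the latter by approximating each $x$ with a finite-variation ``staircase'' whose total variation is controlled, uniformly over the set, by the up-crossing numbers, then run Helly plus a diagonal. The two places you flag as delicate are indeed the ones that need care. First, the variation bound for the staircase $v^{\delta}$ has to be argued precisely: naive band-recording can change bands without producing an up-crossing at any fixed pair $a<b$ (adjacent-band oscillations), so one should use a hysteresis-type construction --- update $v^{\delta}$ only when $x$ has moved by at least $\delta$ from the last recorded value --- so that each jump of $v^{\delta}$ forces an up- or down-crossing of some pair $(c-\delta/2,c+\delta/2)$ with $c$ on a fixed $\delta$-grid; this is how the uniform bound over $K$ is obtained from finitely many $N^{a,b}$'s. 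Second, the passage from sequential $\longrightarrow_S$-relative compactness to topological $S$-compactness of the closure is exactly the content of Jakubowski's Theorem~3.1 and relies on the countable separating family; you are right to isolate it rather than pretend it is automatic. With these two points handled your argument is complete, and it matches the original reference rather than anything the present paper adds.
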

\noindent
Thanks to Doob's inequalities, these conditions are easily satisfied for a sequence of supermartingales (see the discussion at the beginning of \citet[Section~4]{jakubowski1997non}).
\begin{proposition}
    \label{proposition:S_tightness_supermartingale}
    Let $(M^n)_{n \geq 0}$ be a sequence of càdlàg supermartingales. If 
    $$
    \sup_{n}\, \sup_{0 \leq t \leq T}\, \mathbb{E}\left[\left|M^n_t \right|\right] < + \infty\,,
    $$
    then the sequence is $S$-tight.
\end{proposition}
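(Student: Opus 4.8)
The plan is to verify the two conditions of Jakubowski's criterion, Proposition~\ref{proposition:S_tightness}: tightness of the sequence of uniform norms $\left(\|M^n\|_\infty\right)_{n\geq 0}$ and, for every pair of levels $a<b$, tightness of the upcrossing counts $\left(N^{a,b}(M^n)\right)_{n\geq 0}$. Both follow from classical Doob inequalities for supermartingales, the only input being the uniform bound $C:=\sup_n\sup_{0\leq t\leq T}\mathbb{E}\left[|M^n_t|\right]<+\infty$.

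For the first condition I would use the maximal inequality for a càdlàg supermartingale $M$: for every $\lambda>0$,
\[
\lambda\,\mathbb{P}\left(\sup_{0\leq t\leq T}|M_t|\geq\lambda\right)\;\leq\;3\,\sup_{0\leq t\leq T}\mathbb{E}\left[|M_t|\right],
\]
which one obtains by combining Doob's inequality applied to $\sup_{t}M_t$ (whose expectation is controlled by $\mathbb{E}[M_0]+\mathbb{E}[M_T^-]$) with Doob's inequality applied to the submartingale $-M$ for the bound on $\inf_t M_t$. Applying this to each $M^n$ gives $\sup_n\mathbb{P}\left(\|M^n\|_\infty\geq\lambda\right)\leq 3C/\lambda$, which tends to $0$ as $\lambda\to+\infty$; hence the sequence of uniform norms is tight.

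For the second condition, fix $a<b$. I would first recall that the quantity $N^{a,b}(x)$ appearing in Proposition~\ref{proposition:S_tightness} — the supremum of the number of completed upcrossings of $[a,b]$ over all finite time grids $0\leq t_1<\dots<t_{2k}\leq T$ — equals, for a càdlàg path, the non-decreasing limit of the discrete upcrossing counts along any sequence of partitions of $[0,T]$ containing $T$ with mesh tending to $0$. Applying Doob's discrete upcrossing lemma to the supermartingale $M^n$ sampled on each such partition and passing to the monotone limit yields a bound
\[
\mathbb{E}\left[N^{a,b}(M^n)\right]\;\leq\;\frac{|a|+C}{b-a},
\]
uniform in $n$. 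Thus $\left(N^{a,b}(M^n)\right)_{n\geq 0}$ is a sequence of $\mathbb{N}$-valued random variables bounded in $L^1$ uniformly in $n$, hence tight by Markov's inequality. Both conditions of Proposition~\ref{proposition:S_tightness} being verified, $\left(M^n\right)_{n\geq 0}$ is $S$-tight.

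Everything here is a direct application of Doob's maximal and upcrossing inequalities; the only point that deserves a moment's care is the identification of Jakubowski's continuous-time upcrossing number $N^{a,b}$ (a supremum over arbitrary finite families of times) with the monotone limit of the discrete upcrossing counts, so that the discrete upcrossing lemma can legitimately be transferred to it. This is classical and relies only on right-continuity of the paths together with monotone convergence.
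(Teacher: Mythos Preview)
Your proposal is correct and follows precisely the route the paper indicates: the paper does not give a detailed proof but simply remarks that ``Thanks to Doob's inequalities, these conditions are easily satisfied for a sequence of supermartingales (see the discussion at the beginning of \citet[Section~4]{jakubowski1997non})'', and you have spelled out exactly that argument --- Doob's maximal inequality for the tightness of $(\|M^n\|_\infty)_{n\geq 0}$ and Doob's upcrossing lemma for the tightness of $(N^{a,b}(M^n))_{n\geq 0}$.
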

Furthermore, under certain conditions, one has the stability of the martingale property along $S$-convergence. We first need to define a form of \textit{almost sure representation in compacts} for convergence in the $S$ topology, which is stronger than the convergence in distribution. In fact, it is the only sequential topology on the space of probability measures on $\mathbb{D}_T\,$, which is finer than the weak topology, and for which relative compactness coincides with $S$-tightness (see \citet{jakubowski2000convergence} for more details).
\begin{definition}[\citet{jakubowski1997non}, Definition~3.3]
    \label{definition:cv_D}
    For a sequence of càdlàg processes $\left(X^n\right)_{n \geq 0}\,$, we say that $X^n \overset{*}{\longrightarrow}_{\mathcal{D}} X$ if for every subsequence $\left(X^{n_k}\right)_{k \geq 0}$ there exists a further subsequence $\left(X^{n_{k_l}}\right)_{l \geq 0}$ and stochastic processes $\left(\hat{X}^l\right)_{l \geq 0}$ and $\hat{X}$ defined on the probability space $\left([0\,,1]\,, \mathcal{B}_{[0\,,1]}\,, dt\right)$ such that:
    \begin{itemize}
        \item[$\bullet$] For any $l \geq 0\,$, $X^{n_{k_l}} \sim \hat{X}^l$\,, and $X \sim \hat{X}\,$.
        \item[$\bullet$] For any $\omega \in [0\,,1]\,$, $\hat{X}^l(\omega) \overset{S}{\longrightarrow} \hat{X}(\omega)\,$.
        \item[$\bullet$] For any $\varepsilon > 0\,$, there exists an $S$-compact subset $K_{\varepsilon} \subset \mathbb{D}_T$ such that
        $$
        \mathbb{P}\left(\left\{\hat{X}^l \in K_{\varepsilon} \,, \,  l \geq 0\right\}\right) > 1 - \varepsilon\,.
        $$
    \end{itemize}
\end{definition}
\noindent
We can now state and prove the stability of the martingale property. The proof is inspired by the one of \citet*[Lemma~3.7]{guo2017tightness}.
\begin{proposition}
    \label{proposition:S_martingale_stability}
    Let $(M^n)_{n \geq 0}$ be a sequence of martingales on $[0\,, T]$ such that $M^n \overset{*}{\longrightarrow}_{\mathcal{D}} M\,$. If $\sup\limits_n \, \mathbb{E}\left[\sup\limits_{0 \leq t \leq T}\, \left|M^n_t\right|^2 \right] < + \infty\,$, then $M$ is a square-integrable martingale on $[0\,, T]$ with respect to its own filtration.
\end{proposition}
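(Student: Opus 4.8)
The plan is to verify the martingale property of $M$ directly, i.e.\ to show $\mathbb{E}[M_t \mid \mathcal{F}^M_s] = M_s$ by testing against bounded continuous functionals of finitely many coordinates of the path --- except that, since $S$-convergence does not entail convergence of the paths at fixed times, we replace each point evaluation $x \mapsto x(u)$ by the local average $x \mapsto \frac1\varepsilon \int_u^{u+\varepsilon} x(r)\, dr$, which \emph{is} $S$-continuous for every fixed $\varepsilon>0$ by the last item of Proposition~\ref{proposition:S_properties} (apply it to $\Phi(r,x) = \frac1\varepsilon\,\1_{[u,u+\varepsilon]}(r)\,x$), and only at the very end let $\varepsilon \downarrow 0$. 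To set up, apply Definition~\ref{definition:cv_D} to the full sequence: there are a subsequence $(M^{n_l})_{l\geq 0}$ and càdlàg processes $(\hat M^l)_{l\geq 0}$, $\hat M$ on $([0,1], \mathcal{B}_{[0,1]}, dt)$ with $\hat M^l \sim M^{n_l}$, $\hat M \sim M$, and $\hat M^l(\omega) \overset{S}{\longrightarrow} \hat M(\omega)$ for every $\omega$; since the identity we will derive depends only on the law of $M$, it suffices to argue with this one subsequence. From $\hat M^l \sim M^{n_l}$ and the hypothesis we get $\sup_l \mathbb{E}[\sup_{t\leq T}|\hat M^l_t|^2] \leq C < +\infty$; moreover, $S$-continuity of the averaging map gives $|\frac1\varepsilon\int_r^{r+\varepsilon}\hat M_u\,du| = \lim_l |\frac1\varepsilon\int_r^{r+\varepsilon}\hat M^l_u\,du| \leq \liminf_l \sup_{u\leq T}|\hat M^l_u| =: Z$ for every $r$ and $\varepsilon$, hence, letting $\varepsilon \downarrow 0$ and using right-continuity, $\sup_{r\leq T}|\hat M_r| \leq Z$ with $\mathbb{E}[Z^2]\leq C$ by Fatou; in particular $M$ is square-integrable.

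Next, fix $m\geq 1$, times $0 \leq u_1 < \cdots < u_m < s < t < T$, a bounded continuous $g:\R^m\to\R$, and $0 < \varepsilon < \min\{u_2-u_1, \dots, u_m-u_{m-1}, s-u_m, T-t\}$. Set $B^l_\varepsilon(u) := \frac1\varepsilon\int_u^{u+\varepsilon} M^{n_l}_r\,dr$ and $A^l_\varepsilon := \frac1\varepsilon\int_0^\varepsilon (M^{n_l}_{t+\rho} - M^{n_l}_{s+\rho})\,d\rho = \frac1\varepsilon\int_t^{t+\varepsilon}M^{n_l}_r dr - \frac1\varepsilon\int_s^{s+\varepsilon}M^{n_l}_r dr$. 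Each $B^l_\varepsilon(u_i)$ is $\sigma(M^{n_l}_r : r \leq u_m+\varepsilon)$-measurable, and $u_m + \varepsilon < s$, so $g(B^l_\varepsilon(u_1),\dots,B^l_\varepsilon(u_m))$ is measurable with respect to the natural filtration of $M^{n_l}$ at time $s$; by Fubini and the martingale property of $M^{n_l}$ (conditioning at time $s+\rho$ for each $\rho \in [0,\varepsilon]$, all relevant times lying in $[0,T]$), we get $\mathbb{E}[A^l_\varepsilon\, g(B^l_\varepsilon(u_1),\dots,B^l_\varepsilon(u_m))] = 0$ for every $l$. By $S$-continuity of the averaging maps, $B^l_\varepsilon(u_i) \to \frac1\varepsilon\int_{u_i}^{u_i+\varepsilon}\hat M_r\,dr =: B_\varepsilon(u_i)$ and $A^l_\varepsilon \to \frac1\varepsilon\int_0^\varepsilon(\hat M_{t+\rho}-\hat M_{s+\rho})\,d\rho =: A_\varepsilon$ for every $\omega$, so by continuity of $g$ the integrands converge a.s.; since $|A^l_\varepsilon\, g(\cdot)| \leq 2\|g\|_\infty \sup_{r\leq T}|\hat M^l_r|$ is bounded in $L^2$ (hence uniformly integrable), we conclude $\mathbb{E}[A_\varepsilon\, g(B_\varepsilon(u_1),\dots,B_\varepsilon(u_m))] = 0$. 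The case $t = T$ is handled identically, with $M^{n_l}_T$ in place of $\frac1\varepsilon\int_t^{t+\varepsilon}M^{n_l}_r\,dr$ and using $M^{n_l}_T = \hat M^l_T \to \hat M_T$, which holds without extraction by Remark~\ref{remark:S_cv_pointwise_at_T}.

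Now let $\varepsilon \downarrow 0$: right-continuity of $\hat M$ gives $B_\varepsilon(u_i) \to \hat M_{u_i}$ and $A_\varepsilon \to \hat M_t - \hat M_s$ a.s., and $|A_\varepsilon\, g(\cdot)| \leq 2\|g\|_\infty Z$ with $\mathbb{E}[Z^2]\leq C$ provides uniform integrability, so $\mathbb{E}[(\hat M_t - \hat M_s)\, g(\hat M_{u_1},\dots,\hat M_{u_m})] = 0$ first for $0\leq u_1 < \cdots < u_m < s < t \leq T$ and then, approximating arbitrary $0\leq u_1 \leq \cdots \leq u_m \leq s \leq t \leq T$ from the right by such strictly increasing tuples and invoking right-continuity of $\hat M$ together with dominated convergence (domination by $Z$), for all ordered tuples. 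Transferring through $\hat M \sim M$ yields $\mathbb{E}[(M_t - M_s)\, g(M_{u_1},\dots,M_{u_m})] = 0$ for all $0\leq u_1\leq\cdots\leq u_m\leq s\leq t\leq T$ and all bounded continuous $g$; a standard functional monotone class argument over the $\pi$-system generating $\mathcal{F}^M_s = \sigma(M_u : u\leq s)$ then upgrades this to $\mathbb{E}[M_t\mid\mathcal{F}^M_s] = M_s$. Together with the square-integrability from the first step, $M$ is a square-integrable martingale for its own filtration.

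The main obstacle is exactly what motivates the averaging device: although $\hat M^l(\omega) \overset{S}{\longrightarrow} \hat M(\omega)$ for every $\omega$, $S$-convergence of a path only delivers pointwise-in-time convergence along $\omega$-dependent subsequences and outside $\omega$-dependent countable sets (Proposition~\ref{propostion:S_pointwise_cv}), so one cannot pass to the limit directly in $\mathbb{E}[(M^{n_l}_t - M^{n_l}_s)\, g(M^{n_l}_{u_1},\dots,M^{n_l}_{u_m})] = 0$. Replacing evaluations by the $S$-continuous local averages restores an $\omega$-wise limit, at the price of the auxiliary limit $\varepsilon\downarrow 0$, which is harmless thanks to right-continuity and the uniform $L^2$-bound. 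The remainder --- keeping the windows $[u_i, u_i+\varepsilon]$ pairwise disjoint and inside $[0,s)$, the separate treatment of $t=T$, and the right-sided approximation of general time points --- is routine bookkeeping.
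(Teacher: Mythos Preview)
Your proof is correct and takes a genuinely different route from the paper's. The paper invokes an external result (\citet[Theorem~3.11]{jakubowski1997non}) stating that $\overset{*}{\longrightarrow}_{\mathcal{D}}$-convergence yields a subsequence along which the finite-dimensional distributions converge for all time tuples avoiding some countable exceptional set $Q \subset [0,T)$; it then passes the identity $\mathbb{E}\bigl[f_0(M_{t_0})\cdots f_r(M_{t_r})\,(M_t - M_s)\bigr] = 0$ through the limit via uniform integrability for $t_0,\dots,t_r,s,t \notin Q$, and removes $Q$ afterwards by right-continuity. You sidestep that theorem entirely: by replacing point evaluations with the local averages $x \mapsto \frac1\varepsilon\int_u^{u+\varepsilon}x(r)\,dr$, which are $S$-continuous by the last item of Proposition~\ref{proposition:S_properties}, you obtain $\omega$-wise convergence of the test functionals directly from the a.s.\ $S$-convergence supplied by Definition~\ref{definition:cv_D}, and only afterwards let $\varepsilon \downarrow 0$ using right-continuity. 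The gain is self-containment---no appeal to the finite-dimensional convergence theorem is needed, only the properties already listed in Proposition~\ref{proposition:S_properties} and Remark~\ref{remark:S_cv_pointwise_at_T}; the cost is the extra (routine) $\varepsilon\downarrow 0$ limit and the separate handling of $t = T$. The paper's argument is shorter once Theorem~3.11 is granted.
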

\begin{proof}
    From \citet[Theorem~3.11]{jakubowski1997non}, $\overset{*}{\longrightarrow}_{\mathcal{D}}$-convergence implies the existence of a subsequence $\left(M^{n_k}\right)_{k \geq 0}$ and of a countable set $Q \subset [0\,,T)\,$, such that for any $r \in \N$ and any $t_0 < t_1 < ... < t_r$ in $[0\,,T] \setminus Q\,$, we have the finite-dimensional weak convergence
    $$
\left(M^{n_k}_{t_0}\,, M^{n_k}_{t_1}\,, ... \,, M^{n_k}_{t_r}\right) \implies \left(M_{t_0}\,, M_{t_1}\,, ... \,, M_{t_r} \right) \,.
    $$
    Furthermore, for any $t \leq T\,$, the sequence $\left(M^{n_k}_t\right)_{k \geq 0}$ is uniformly integrable since 
    $$
    C := \sup_n \, \mathbb{E}\left[\sup_{0\leq t \leq T}\left|M^n_t \right|^2\right] < \infty\,.
    $$
    This first implies that $M_t$ is square-integrable for any $t \in [0\,,T] \setminus Q$ with 
    $$
    \mathbb{E}\left[\left|M_t\right|^2 \right]\leq C\,, \quad t \in [0\,,T] \setminus Q\,.
    $$
    Therefore, for any sequence $\left(t_n\right)_{n \geq 0}$ in $[0\,,T] \setminus Q\,$, $\left(M_{t_n}\right)_{n \geq 0}$ is uniformly integrable, so that by right-continuity of $M\,$,
    \begin{equation}
        \label{eq:M_square_integrable}
\mathbb{E}\left[\left|M_t \right|^2\right] \leq C < + \infty \,, \quad t \leq T\,.
    \end{equation}
    Then, let $s < t$ in $[0\,,T] \setminus Q\,$, $r \in \N$ and $t_0 < t_1 <... <t_r$ in $[0\,, s] \setminus Q\,$, as well as $f_0\,,f_1\,,...\,,f_r$ bounded real-valued continuous functions on $\R\,$. We have 
    $$
    \mathbb{E}\left[f_0\left(M^{n_k}_{t_0}\right)\,f_1\left(M^{n_k}_{t_1}\right)\,...\,f_r\left(M^{n_k}_{t_r}\right)\, \left(M^{n_k}_t - M^{n_k}_s\right)\right] = 0\,, \quad k \geq 0\,,
    $$
    by martingality. From the finite-dimensional convergence, as well as uniform integrability of $\left(M^{n_k}_t\right)_{k \geq 0}$ and $\left(M^{n_k}_s\right)_{k \geq 0}\,$, we obtain, in the limit $k \to \infty\,$,
    $$
    \mathbb{E}\left[f_0\left(M_{t_0}\right)\,f_1\left(M_{t_1}\right)\,...\,f_r\left(M_{t_r}\right)\, \left(M_t - M_s\right)\right] = 0\,.
    $$
    This being true for $t_0\,, t_1\,, ... \,, t_r \,, t\,, s$ in $[0\,,T] \setminus Q\,$, by the same argument we used to obtain \eqref{eq:M_square_integrable}, we get for any $r \in \N\,$ and bounded continuous $f_0\,, f_1\,, ... \,, f_r\,$,
    $$
\mathbb{E}\left[f_0\left(M_{t_0}\right)\,f_1\left(M_{t_1}\right)\,...\,f_r\left(M_{t_r}\right)\, \left(M_t - M_s\right)\right] = 0 \,, \quad t_0 < t_1 < ... < t_r \leq s < t \leq T\,.
    $$
    Finally, for any Borel set $B \in \mathcal{B}_{\R}\,$, one can find a sequence of bounded continuous functions converging pointwise to $\1_{B}$ while staying uniformly bounded. Therefore, we get for any $r \in \N\,$, and any $t_0 < ... < t_r \leq s < t \leq T\,$,
    $$
\mathbb{E}\left[\1_{M_{t_0} \in B_0\,, M_{t_1} \in B_1\,, ... \,, M_{t_r} \in B_r}\, \left(M_t - M_s\right)\right] = 0\,, \quad B_0\,, ... \,, B_r \in \mathcal{B}_{\R}\,,
$$
so that by the monotone class theorem
$$
\mathbb{E}\left[M_t \, | \, \mathcal{F}_s\right] = M_s \,, \quad s < t \leq T\,,
$$
where $\left(\mathcal{F}_t\right)_{t \leq T} := \left(\sigma\left(M_s \,, \, 0 \leq s \leq t\right)\right)_{t \leq T}$ is the filtration generated by the process $M\,$. This, along with \eqref{eq:M_square_integrable}, concludes the proof.
\end{proof}

The $S$ topology is different from the Skorokhod topologies in that it is not metrizable. In particular, the usual Skorokhod representation theorem \citep[Theorem~6.7]{billingsley2013convergence} cannot be readily applied. However, Jakubowski developed a version of it in the case of general topological spaces admitting a countable family of continuous functions separating their points (see Proposition \ref{proposition:S_properties} for a definition).
\begin{proposition}[\citet{jakubowski1998almost}, Theorem~2, Theorem~3]
    \label{proposition:skorokhod_representation_non_metrizable}
    Let $(E, \tau)$ be a topological space such that there exists a countable family of $\tau$-continuous functions separating points of $E\,$, in the sense of Proposition \ref{proposition:S_properties}. Let $(X^n)_{n \geq 0}$ be a $\tau$-tight sequence of $E$-valued random elements. Then, one can find a subsequence $\left(X^{n_k}\right)_{k \geq 0}$ as well as $E$-valued random elements $\left(\hat{X}^k\right)_{k \geq 0}$ and $\hat{X}$ defined on the common probability space $\left([0\,, 1]\,, \mathcal{B}_{[0\,, 1]}\,, dt\right)$ such that:
    \begin{itemize}
        \item[$\bullet$] For any $k\geq 0\,$, $X^{n_k} \sim \hat{X}^k\,.$
        \item[$\bullet$] For any $\omega \in [0\,,1]\,$, $\hat{X}^k(\omega) \overset{\tau}{\to} \hat{X}(\omega)$\,.
        \item[$\bullet$] For any $\varepsilon >0\,$, there exists a $\tau$-compact subset $K_{\varepsilon} \subset E$ such that
        $$
        \mathbb{P}\left(\left\{\hat{X}^k \overset{\tau}{\to} \hat{X} \right\} \cap \left\{\hat{X}^k \in K_{\varepsilon}\,, \, k \geq 0 \right\}\right) > 1 - \varepsilon\,.
        $$
    \end{itemize}
\end{proposition}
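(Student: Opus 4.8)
The plan is to use the countable separating family to embed $(E,\tau)$ continuously into a Polish space, transport the uniform tightness there, invoke the \emph{classical} Skorokhod representation theorem, and then carefully pull the resulting random elements and their a.s.\ convergence back to $E$. Concretely: let $(g_i)_{i\ge1}$ be the countable family of $\tau$-continuous real functions separating the points of $E$. Replacing $g_i$ by $\arctan\circ g_i$ I may assume each $g_i$ is bounded (the family still separates points), and I set $\iota:E\to\R^{\N}$, $\iota(x):=(g_i(x))_{i\ge1}$, with $\R^{\N}$ carrying the product topology, which is Polish (metrized by $\rho(u,v):=\sum_i2^{-i}(|u_i-v_i|\wedge1)$). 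Then $\iota$ is continuous and injective, and for every $\tau$-compact $K\subseteq E$ the restriction $\iota|_K:K\to\iota(K)$ is a continuous bijection from a compact space onto a metric subspace, hence a homeomorphism; in particular $\iota(K)$ is $\rho$-compact and $\iota^{-1}$ is continuous on $\iota(K)$.

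Next I transfer tightness. By $\tau$-tightness choose $\tau$-compacts $L_1\subseteq L_2\subseteq\cdots$ with $\sup_n\mathbb{P}(X^n\notin L_m)<2^{-m}$, and put $A:=\bigcup_m\iota(L_m)$, a $\sigma$-compact (hence Borel) subset of $\R^{\N}$ contained in $\iota(E)$. For $\mu^n:=\mathrm{Law}(\iota(X^n))$ on $\R^{\N}$, injectivity of $\iota$ gives $\{\iota(X^n)\in\iota(L_m)\}=\{X^n\in L_m\}$, so $\mu^n(\iota(L_m))>1-2^{-m}$ for all $n$, hence $(\mu^n)_n$ is tight on the Polish space $\R^{\N}$. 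By Prokhorov's theorem there is a subsequence $(n_k)$ and a probability $\mu$ with $\mu^{n_k}\Rightarrow\mu$, and by the portmanteau theorem applied to the closed sets $\iota(L_m)$ one gets $\mu(\iota(L_m))\ge1-2^{-m}$, so $\mu(A)=1$. Since $\R^{\N}$ is Polish, the classical Skorokhod representation theorem yields $Y^k,Y:([0,1],\mathcal{B}_{[0,1]},dt)\to\R^{\N}$ with $Y^k\sim\mu^{n_k}$, $Y\sim\mu$ and $Y^k(\omega)\to Y(\omega)$ in $\R^{\N}$ for $dt$-a.e.\ $\omega$. As $\mu^{n_k}(A)=\mu(A)=1$, the set $N:=\{Y^k\not\to Y\}\cup\{Y\notin A\}\cup\bigcup_k\{Y^k\notin A\}$ is Lebesgue-null; redefining $Y^k,Y$ to the constant $\iota(x_0)$ on $N$ (for a fixed $x_0\in L_1$) changes no law, and afterwards $Y^k(\omega)\to Y(\omega)$ with $Y^k(\omega),Y(\omega)\in A$ for every $\omega$.

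I then pull back: $\hat X^k:=\iota^{-1}\circ Y^k$, $\hat X:=\iota^{-1}\circ Y$, well defined since $A\subseteq\iota(E)$. Because $\iota^{-1}$ is continuous on each $\iota(L_m)$, it is Borel on the countable cover $(\iota(L_m))_m$ of $A$, so $\hat X^k,\hat X$ are measurable into $(E,\sigma(g_i:i\ge1))$, which is the Borel $\sigma$-field of $\tau$ in the spaces this is applied to, namely $(\mathbb{D}_T,M_1)$, $(\mathbb{D}_T,S)$ and $(\mathbb{D}_T^2,M_1\otimes S)$, by Propositions~\ref{proposition:M_1_borel_field} and~\ref{proposition:S_properties}. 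For $B$ in that $\sigma$-field, $\{\hat X^k\in B\}=\{Y^k\in\iota(B)\}$ and $\mathbb{P}(Y^k\in\iota(B))=\mu^{n_k}(\iota(B))=\mathbb{P}(\iota(X^{n_k})\in\iota(B))=\mathbb{P}(X^{n_k}\in B)$ by injectivity, so $\hat X^k\sim X^{n_k}$; likewise the law of $\hat X$ is the unique one pushing forward to $\mu$ under $\iota$. This gives the first bullet.

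The remaining, and genuinely delicate, step is to upgrade the $\R^{\N}$-convergence $Y^k(\omega)\to Y(\omega)$ to the $\tau$-convergence $\hat X^k(\omega)\to\hat X(\omega)$ and to produce, for each $\varepsilon>0$, a \emph{single} $\tau$-compact $K_\varepsilon$ containing the whole trajectory $(\hat X^k(\omega))_k$ on an event of probability $>1-\varepsilon$. This is the main obstacle: $\iota^{-1}$ is only continuous on each $\tau$-compact $\iota(L_m)$, not globally, so the $\R^{\N}$-convergence transfers to $\tau$-convergence only on events where $(\hat X^k(\omega))_k$ stays inside one $\tau$-compact, and a naive union bound over $k$ fails here since the per-$k$ estimates $\mathbb{P}(X^{n_k}\notin L_m)<2^{-m}$ are not summable in $k$. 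The way around this, following \citet{jakubowski1998almost}, is to exploit that the \emph{limit} $\hat X$ is a.s.\ in the $\sigma$-compact set and that a $\tau$-convergent sequence together with its limit is automatically $\tau$-compact (a purely topological fact valid in any space), and to pass to a further subsequence along which the mass concentrates fast enough (using $\mu^{n_k}(\iota(L_m)^c)<2^{-m}$ for every $m$, e.g.\ with $m=k$, together with Borel--Cantelli) so that for a.e.\ $\omega$ the trajectory eventually enters the same $\tau$-compact as $\hat X(\omega)$; inner regularity of Lebesgue measure on $[0,1]$ then delivers, for each $\varepsilon$, the required $\tau$-compact $K_\varepsilon$, which is the third bullet. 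Once $\tau$-convergence holds for a.e.\ $\omega$, one final redefinition on the exceptional null set (to a constant) makes $\hat X^k(\omega)\to\hat X(\omega)$ hold for every $\omega$, giving the second bullet. Making this last paragraph fully rigorous is the crux of the whole argument.
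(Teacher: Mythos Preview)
The paper does not prove this proposition: it is quoted verbatim from \citet{jakubowski1998almost} and used as a black box, so there is no ``paper's own proof'' to compare against. Your outline is precisely Jakubowski's original strategy --- embed $E$ into $\R^{\N}$ via the countable separating family, apply Prokhorov and the classical Skorokhod representation there, and pull back through $\iota^{-1}$ using that $\iota$ is a homeomorphism on each $\tau$-compact.

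You have correctly isolated the one genuine difficulty, and you are right that it is the crux. However, the Borel--Cantelli step you sketch does not close the gap as stated. Taking $m=k$ and using $\mathbb P(\hat X^k\notin L_k)<2^{-k}$ gives, for a.e.\ $\omega$, that $\hat X^k(\omega)\in L_k$ \emph{eventually}; but the $L_k$ are increasing, so this only places each term in an ever-growing compact, not the whole tail in a \emph{fixed} one. Since $\iota^{-1}$ is continuous only on each $\iota(L_m)$ separately, $\R^{\N}$-convergence of $Y^k(\omega)$ to $Y(\omega)\in\iota(L_{m_0(\omega)})$ does not by itself force $\hat X^k(\omega)\to\hat X(\omega)$ in $\tau$. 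The sentence ``eventually enters the same $\tau$-compact as $\hat X(\omega)$'' is therefore the assertion that still needs proof, and the observation that a $\tau$-convergent sequence together with its limit is $\tau$-compact is circular here (it presupposes the very $\tau$-convergence you are trying to establish). Jakubowski's actual argument resolves this with a more careful diagonal construction that simultaneously controls, on a set of large probability, the index $m$ for which $\hat X^k\in L_m$ \emph{uniformly in $k$}; this is where the full strength of uniform tightness ($\sup_n\mathbb P(X^n\notin L_m)<2^{-m}$ for every $n$, not just along a subsequence) is exploited together with a suitable extraction. Your plan is correct in spirit, but the last paragraph would need this additional work to become a proof.
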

\begin{remark}
\label{remark:product_sigma_field}
    In Proposition \ref{proposition:skorokhod_representation_non_metrizable}, an $E$-valued random element is defined as a measurable function $X : \left(\Omega\,, \mathcal{F}\right) \to \left(E\,, \mathcal{B}_{\tau}\right)$ where $\left(\Omega\,, \mathcal{F}\right)$ is a measurable space and $\mathcal{B}_{\tau}$ is the Borel $\sigma$-field generated by the topology $\tau\,$. In the case of a couple of $\mathbb{D}_T$-valued random variables $\left(X\,,M\right)\,$, both are measurable with respect to $\mathcal{F}_{\mathbb{D}_T}$ generated by the evaluations (see Definition \ref{def:sigma_field} and Remark \ref{remark:sigma_field}) and the couple is measurable with respect to the product $\sigma$-field $\mathcal{F}_{\mathbb{D}_T} \otimes \mathcal{F}_{\mathbb{D}_T}\,$. Since in the present work we consider the product topology $M_1 \otimes S$ on $\mathbb{D}_T^2$, we have to check that $\left(X\,,M\right)$ is measurable with respect to its Borel $\sigma$-field $\mathcal{B}_{M_1 \otimes S}\,$. From Propositions \ref{proposition:M_1_borel_field} and \ref{proposition:S_properties},
    $$
    \mathcal{B}_{M_1} = \mathcal{B}_S = \mathcal{F}_{\mathbb{D}_T}\,,
    $$
    so that
    $$
    \mathcal{F}_{\mathbb{D}_T} \otimes \mathcal{F}_{\mathbb{D}_T} = \mathcal{B}_{M_1} \otimes \mathcal{B}_S \subset \mathcal{B}_{M_1 \otimes S}\,,
    $$
    where the inclusion is a classical result of measure theory. Finally, the product topology $M_1 \otimes S$ is included in $M_1 \otimes M_1$ which has a countable basis of open sets (since it is Polish) so that
    $$
    \mathcal{B}_{M_1 \otimes M_1} = \mathcal{B}_{M_1} \otimes \mathcal{B}_{M_1} = \mathcal{F}_{\mathbb{D}_T} \otimes \mathcal{F}_{\mathbb{D}_T}\,,
    $$
    and thus
    $$
    \mathcal{F}_{\mathbb{D}_T} \otimes \mathcal{F}_{\mathbb{D}_T} = \mathcal{B}_{M_1} \otimes \mathcal{B}_{S} = \mathcal{B}_{M_1 \otimes S}\,.
    $$
    We are therefore allowed to apply Proposition \ref{proposition:skorokhod_representation_non_metrizable} in our context. Moreover, the obtained processes defined on $\left( [0,1]\,, \mathcal{B}_{[0\,,1]}\right)$ are measurable with respect to $\mathcal{F}_{\mathbb{D}_T} \otimes \mathcal{F}_{\mathbb{D}_T}\,$.
\end{remark}

\bibliographystyle{plainnat}
\bibliography{ref}

\end{document}